\newcommand{\nx}{\pgfmatrixnextcell}
\newcommand{\define}[2]{{\em #1}\index{#2}}
\newcommand{\Em}{\mathbb M}
\providecommand{\cal}{\mathcal}
\renewcommand{\Bbb}{\mathbb}
\newenvironment{pf}{\begin{proof}}{\end{proof}}
\newcommand{\Ee}{{\cal{E}}}
\newcommand{\Ef}{{\cal{F}}}
\newcommand{\Tau}{{\cal{T}}}
\newcommand{\Zee}{{\Bbb{Z}}}
\newcommand{\Nat}{{\Bbb{N}}}
\newcommand{\Qyu}{{\Bbb{Q}}}
\newcommand{\Err}{{\Bbb{R}}}
\newcommand{\al}{\alpha}
\newcommand{\sig}{\sigma}
\newcommand{\eps}{\varepsilon}
\renewcommand{\phi}{\varphi}
\renewcommand{\rho}{\varrho}
\newcommand{\unii}{\mathbb I}
\newcommand{\ntr}{{n\in\omega}}
\newcommand{\loe}{\leq}
\newcommand{\goe}{\geq}
\newcommand{\subs}{\subseteq}
\newcommand{\nnempty}{\ne\emptyset}
\renewcommand{\iff}{\Longleftrightarrow}
\newcommand{\id}[1]{{\operatorname{i\!d}_{#1}}} 
\newcommand{\dom}{\operatorname{dom}}
\newcommand{\cod}{\operatorname{cod}}
\newcommand{\oraz}{\qquad\text{and}\qquad}
\newcommand{\poset}{{\Bbb{P}}}
\newcommand{\length}{\operatorname{length}}
\newtheorem{tw}{Theorem}[section]
\newtheorem{wn}[tw]{Corollary}
\newtheorem{lm}[tw]{Lemma}
\newtheorem{prop}[tw]{Proposition}
\newtheorem{claim}[tw]{Claim}
\theoremstyle{definition}
\newtheorem{df}[tw]{Definition}
\newtheorem{ex}[tw]{Example}
\newtheorem{problem}[tw]{Problem}
\theoremstyle{remark}
\newcommand{\set}[1]{\{#1\}}
\newcommand{\setof}[2]{\{#1\colon #2\}}
\newcommand{\seq}[1]{\langle #1 \rangle}
\newcommand{\sett}[2]{\{#1\}_{#2}}
\newcommand{\sn}[1]{\{#1\}} 
\newcommand{\dn}[2]{\{#1,#2\}} 
\newcommand{\pair}[2]{\langle #1, #2 \rangle} 
\newcommand{\triple}[3]{\langle #1, #2, #3 \rangle} 
\newcommand{\map}[3]{#1\colon #2 \to #3} 
\newcommand{\umap}[3]{#2 \stackrel{#1}{\to} #3} 
\newcommand{\img}[2]{#1[#2]} 
\newcommand{\fra}{Fra\"iss\'e}
\newcommand{\rless}{\prec}
\newcommand{\rmore}{\succ}
\providecommand{\nat}{\omega}
\newcommand{\ciag}[1]{{\sett{{#1}_n}{\ntr}}}
\newcommand{\aut}{\operatorname{Aut}}
\newcommand{\ob}[1]{\operatorname{Obj}( #1 )}
\newcommand{\iso}{\approx}
\newcommand{\ciagi}[1]{\sig{#1}}
\newcommand{\fK}{{\mathfrak{K}}}
\newcommand{\fC}{{\mathfrak{C}}}
\newcommand{\bX}{{\mathbb{X}}}
\newcommand{\cmp}{\circ} 
\newcommand{\Pel}{\mathbf P}
\newcommand{\BM}[1]{\operatorname{BM}\left(#1\right)}
\newcommand{\BMG}[2]{\BM {#1, #2} }
\newcommand{\separator}{\begin{center} \leafright \leafright \leafright \decotwo \decotwo \decotwo \leafleft \leafleft\leafleft
\end{center}}
\newcommand{\iza}{\Theta} 
\newcommand{\mrak}[1]{{\color{blue} #1 }}
\newcommand{\koment}[1]{}
\providecommand{\ar}{\arrow}
\newcommand{\bP}{\mathbb{P}}
\newcommand{\catgen}[1]{\langle #1 \rangle}
\newcommand{\cTau}{\catgen{\Tau}}
\newcommand{\size}{\operatorname{size}}
\newcommand{\uv}{\mathfrak{V}}
\newcommand{\eva}[1]{{#1}^{\operatorname{fin}}}
\newcommand{\Eva}[1]{{#1}^{\sigma}}
\newcommand{\Iso}{\operatorname{Iso}}
\newcommand{\pustka}[1]{} 
\newcommand{\Kat}{Kat\v{e}tov}
\tikzset{
	ppath/.style = {
		dash pattern = on (#1 * 7pt) off 2pt,
		postaction = {
			decorate,
			decoration = {
				markings,
				mark = between positions (#1 * 7pt + 0.2pt) and 1 step (#1 * 7pt + 2pt) with {\arrow{>};},
			},
		},
	},
	path/.default = 1,
}
\title{Evolution systems: A framework for studying generic mathematical structures}
\author{
{\sc Wies{\l}aw Kubi\'s}\footnote{Institute of Mathematics, Czech Academy of Sciences, Czech Republic. Research supported by EXPRO project 20-31529X (Czech Science Foundation).}
\and
{\sc Paulina Radecka}\footnote{Institute of Mathematics, Czech Academy of Sciences, Czech Republic and Warsaw University of Technology, Poland. Research supported by EXPRO project 20-31529X (Czech Science Foundation).}
}
\date{\clocktime\today}
\begin{document}

\maketitle

\begin{abstract}
	We introduce the concept of an evolution system, which provides a convenient framework for studying generic mathematical structures and their properties.
	Roughly speaking, an \emph{evolution system} is a category endowed with a selected class of morphisms called \emph{transitions}, and with a selected object called the \emph{origin}.
	We illustrate it by a series of examples from several areas of mathematics. We formulate sufficient conditions for the existence of the unique ``most complicated'' evolution.
	In case the evolution system ``lives'' in model theory and nontrivial transitions are one-point extensions, the limit of the most complicated evolution is known under the name \emph{\fra\ limit}, a unique countable universal homogeneous model determined by a fixed class of finitely generated models satisfying some obvious axioms.
	
	Evolution systems can also be viewed as a generalization of abstract rewriting systems, where the partially ordered set is replaced by a category. In our setting, the \emph{process} of rewriting plays an important role, whereas in rewriting systems only the result of a rewriting procedure is relevant. An analogue of Newman's Lemma holds in our setting, although the proof is a bit more delicate, nevertheless, still based on Huet's idea using well founded induction.
	
\ \\
\noindent
{MSC (2010):}
18A05, 
03C95, 
18A30. 

\noindent
{\it Keywords:} Evolution system, evolution, amalgamation, absorption property, confluence, determination, termination.
\end{abstract}

\tableofcontents

\section{Introduction}

We start with a loose explanation of our main ideas, followed by a series of motivating examples. Let us imagine that a certain transition system is given, starting from some initial state $\iza$ and having the \emph{amalgamation property}, saying that for every two transitions $f$, $g$ from the same state it is possible to make several further transitions $f_1,\dots, f_n$, $g_1,\dots,g_m$ so that the compositions
$$f_n \cmp \dots \cmp f_1 \cmp f \oraz g_m \cmp \dots \cmp g_1 \cmp g$$
are the same (or almost the same, in case some measuring of their distance is allowed), in particular, leading to the common state. It is then natural to expect that there might exist a special infinite process (evolution) accumulating all possible states and in some sense recording all possible transitions. Specifically, denoting a fixed transition $f$ from $A$ to $B$ by $\umap{f}{A}{B}$, an infinite evolution can be represented as an infinite sequence of the form
$$\begin{tikzcd}
A_0 \ar[r, "f_0"] & A_1 \ar[r, "f_1"] & A_2 \ar[r, "f_2"] & \cdots
\end{tikzcd}$$
where $A_0$ is the initial state $\iza$.  Saying that such a process has the \emph{absorption property} for all transitions means that for every $n$, given a transition $\umap{f}{A_n}{Y}$ going out of the process, there exist $m>n$ and a sequence of transitions 
$$\begin{tikzcd}
	Y \ar[r, "g_0"] & Y_1 \ar[r, "g_1"] & \cdots \ar[r, "g_k"] & Y_{k+1} = A_m
\end{tikzcd}$$
such that the composition $f_{m-1} \cmp \dots \cmp f_n$ is the same as $g_k \cmp \dots \cmp g_0 \cmp f$.
It turns out that if a process with the absorption property exists, it is essentially unique. This means that every two processes with the absorption property are isomorphic, which in turn means that there is a way of ``jumping" between the first and the second one infinitely many times.

It is rather evident that the proper framework for describing and studying evolution systems comes from category theory. Namely, the objects are the system states while the arrows are transitions or, more generally, compositions of transitions. Category theory offers elegant and quite strong, yet at the same time manageable, tools. A good example is the concept of a functor: in order to describe an infinite evolution, it suffices to use only functors from the set of non-negative integers $\nat$ (more traditionally denoted by $\Nat$) viewed as a category in which the objects are natural numbers and arrows are pairs of the form $\pair n m$ where $n \loe m$. 

Actually, a significant power of category theory lies within the notion of \emph{colimit} of a functor, unifying concepts like \emph{supremum} in partially ordered sets, unions of families of structures, and so on. In particular, every functor from the natural numbers has its colimit in a suitable, possibly bigger, category.

While it is possible to investigate infinite evolutions in their ``pure'' form, it is more convenient and more natural to look at their colimits, identifying them with the isomorphism classes of certain objects in a bigger category. As mentioned above, actually we get an isomorphism class of a single object, which typically has many symmetries. This is due to \emph{homogeneity}, saying that every transition between two states of evolution processes with the absorption property can be extended to (or at least approximated by) an isomorphism between these processes.

Our goal is to introduce and study {abstract evolution systems}, focusing on evolutions with the absorption property. In particular, we show obvious connections with the theory of universal homogeneous structures, known in model theory as \emph{\fra\ limits}.

\separator

The note is organized as follows. We start with motivating examples (Section~\ref{SecTwoo}) and we introduce formal definitions in Section~\ref{SecThreee} together with examples illustrating the theory. Next we present the theory of universal homogeneous structures (Section~\ref{SecFourr}), with a result involving a natural infinite game.
Section~\ref{SecRwrtngsTrms} discusses determined and terminating evolution systems, proving an analogue of Newman's Lemma and showing that terminating systems lead to finite homogeneous objects. Section~\ref{graphrewriting} explains how graph rewriting can be viewed in the language of evolution systems. 
The last Section~\ref{SecFajw} contains a discussion of selected further research directions.

\koment{\mrak{
We are also planning to  search for new concrete applications in physics, philosophy, and related areas of science, not excluding mathematics of course, where many examples had already been explored and where the theoretical foundations live.
}}

\paragraph{Historical remarks.}
The concept of an evolution system is formally new (although the ideas are at least as old as the theory of categories), however the main results are adaptations of abstract \emph{\fra\ theory} of universal homogeneous structures. This theory was created by Roland \fra~\cite{Fraisse} around the year 1954 using the language of model theory. Namely, \fra\ observed that Cantor's theorem characterizing the linearly ordered set of the rational numbers can actually be stated and proved in a much more general setup, using any first-order language. Actually, \fra\ only considered relational structures, however adding algebraic operations does not affect the result.
\fra's main result states that, given a suitable class of finite structures satisfying certain natural conditions (including the amalgamation property), there exists a unique countable ultra-homogeneous structure from which one can reconstruct the original class. Ultra-homogeneity (often called just \emph{homogeneity}) means that every isomorphism between finite substructures extends to an automorphism.
Soon after \fra's work, the theory was extended by J\'onsson~\cite{Jon} to uncountable structures, with extra cardinal arithmetic assumptions.
As it happens, \fra\ theory actually has a purely category-theoretic nature, although it was formally stated this way (almost forty years after \fra's work) by Droste and G\"obel~\cite{DroGoe} and several years ago explored by the first author~\cite{Kub40, Kub41}, also in metric-enriched categories. The concept of an evolution system, the main topic of the present note, arose few years ago during the first author's investigations, mainly as a natural language extracting the essence of the theory of universal homogeneous structures, while also offering a wider scope of possible applications, far beyond model theory.

{Regarding homogeneity, we want to point out the connection to the notion of \textit{self-similarity} form works of Tom Leinster \cite{Lei04, Lei11}. In his paper he says "Local statements of self-similarity say something like
`almost any small pattern observed in one part of the object can be observed
throughout the object, at all scales'. [...] Global statements say something	like `the whole object consists of several smaller copies of itself glued together'". This undoubtedly corresponds to the notion of homogeneity: looking at small patterns in one part would mean that we are given an isomorphism between substructures, and "can be observed throughout the object" means that it can be extended to an automorphism of the whole structure.

Nevertheless, Leinster is using $M$-coalgebras for a suitable endofunctor of an equational system; then one looks for the universal family of spaces satisfying the equations. We believe that our language ultimately addresses the concept of self-similarity in a simpler context, without incorporating the notion of a module of a category. This makes our theory more accessible not only to readers familiar with category theory but also to other mathematicians and other scientists acquainted with the basics of category theory, e.g., some theoretical physicists and philosophers.}

Let us mention that \fra\ theory gained a lot of attention after the seminal work of Kechris, Pestov, and Todor\v{c}evi\'c~\cite{KPT} discovering a correspondence between dynamic properties of the automorphism group of the \fra\ limit and combinatorial properties of the \fra\ class.
Currently there are several lines of research exploring various faces of the theory of universal homogeneous structures, mainly in model theory (including its continuous variant dealing with metric structures) and in pure category theory.
The current note also exhibits connections between finite \fra\ limits and terminating rewrting systems.

Summarizing, mathematical objects resembling \fra\ limits appear in several areas of mathematics and one of the goals of this note is to present some of them, through the ``looking glass'' of evolution systems.

\section{Examples}\label{SecTwoo}

Before going into technical details, we wish to present several motivating examples that fit into our framework.

\subsection{Shattering glass}

Let us look at a very natural evolution system starting from a nice ``glass'' rectangle $S$ (or any other polygon made of glass), whose transitions are formed by breaking the glass into smaller and smaller pieces.
A state in this system is a finite family of pairwise disjoint polygons that can be glued together, recovering $S$.
By ``breaking'' a polygon $P$ we mean replacing it by polygons $Q_1, \dots, Q_k$ that play the role of the ``pieces'' of $P$, namely, all of them can be translated in such a way that the union is $P$ and their interiors are pairwise disjoint.
A \emph{transition} from a state $\{P_1, \dots, P_n\}$ to a state $\{Q_1, \dots, Q_m\}$ is breaking each $P_i$ (or just a selected one) and just collecting the pieces together. We need to keep track of how the pieces were originally arranged, which can be easily achieved by keeping in mind a continuous surjective mapping $\map{f}{\bigcup_{i=1}^m Q_i}{\bigcup_{j=1}^n P_j}$ performing the gluing, namely, it is an isometry on each $Q_i$ and each $P_j$ is the union of images of some $Q_i$s with pairwise disjoint interiors.
So after all, a transition from a state $\{P_1, \dots, P_n\}$ to a state $\{Q_1, \dots, Q_m\}$ is a suitable continuous function from $\bigcup_{i=1}^m Q_i$ onto $\bigcup_{j=1}^n P_j$ that `memorizes' the breaking in the sense that the function knows how to glue the pieces back.
It is rather clear that this transition system has the amalgamation property: Given two breakings of a polygon $P$, there is a very concrete breaking of $P$ refining both, simply by intersecting all the pieces.

The infinite process of breaking the initial glass polygon $S$ leads to a compact planar set $C$, the inverse limit of the sequence of ``breaking'' mappings.
If the process has the absorption property, $C$ is homeomorphic to the well known Cantor set. What is the conclusion?
Well, one can say that no matter how nice the initial glass polygon is, after breaking it thoroughly infinitely many times, we always receive the same ``dust", namely, the Cantor set whose geometry is rather mediocre, as it lacks nontrivial connected subsets.

As a conclusion, we see that there exist natural evolution processes, like the one described above, whose limits might be of different nature and outside of the real world. Nevertheless, investigating such processes and their limits may lead to a better understanding of the original evolution system.

\subsection{Ribbons}

Given a flexible ribbon, it is easy to imagine many possibilities of folding it so that after squeezing (and possibly gluing) the material we obtain another ribbon. Such a transition can be represented by a continuous surjection $\map f I J$, where $I$, $J$ are the two ribbons, namely compact intervals of the real numbers. Since each two such intervals have exactly the same structure, we may assume $I = J = [0,1]$. The process of folding and squeezing the ribbon without reverting it can be recorded as a continuous surjection $\map f{[0,1]}{[0,1]}$.
Thus, we are dealing with an evolution system where all the states are identical, however the transitions could be quite complicated.
This is indeed the case, as the natural limit of the process \text{red}{evolution} with the absorption property is the \emph{pseudo-arc}, a rather intriguing planar geometric object. Mathematically, this is the unique, up to homeomorphism, compact connected subset $\bP$ of the plane which from a far distance looks like the interval (formally it is called \emph{chainable} or \emph{arc-like}), while at the same time it cannot be written as the union of two connected proper closed subsets. Furthermore, every nontrivial closed connected subset of $\bP$ is homeomorphic to $\bP$.

The amalgamation property of this evolution system is known under the name \emph{Mountain Climbing Theorem}, saying that for each two reasonable (say, piecewise monotone) continuous surjections $f,g$ from the unit interval onto itself there exist continuous surjections $f'$, $g'$ on the unit interval satisfying $f \cmp f' = g \cmp g'$.
Assuming $f(0)=0=g(0)$, $f(1)=1=g(1)$, drawing the graphs of $f$ and $g$, we can imagine a mountain and the statement above says that two climbers can go from the bottom to the top on the two different mountain slopes in such a way that at each moment of time their altitudes are the same.

The evolution process described above exhibits the fact that sometimes the states play an inferior role to the transitions that carry all the relevant concrete information about the process, leading to rather surprising structures, again very different from the states of the system.
On the other hand, the pseudo-arc contains all the relevant information about possible continuous surjections between closed intervals.

\subsection{Simplices}

There is a clear definition of a finite-dimensional simplex: The convex hull of an affinely independent finite set. So, the 0-dimensional simplex is a point, the 1-dimensional simplex is an interval, the 2-dimensional simplex is a triangle, and so on.
All the finite-dimensional simplices could be thought of states of some evolution system. The question is how to describe transitions. The obvious possibility is to consider embeddings onto faces, namely, the $k$-dimensional simplex $\Delta_k$ can be isometrically embedded into any $\Delta_m$ with $m>k$ so that its vertices are within the vertices of $\Delta_m$. Let us assume that $m = k+1$. Then there are exactly $(k+1)! \cdot (k+2) = (k+2)!$ possibilities for such embeddings. This definitely makes sense, nevertheless every infinite process in this system is actually the same: it is a strictly increasing chain of finite-dimensional simplices in which the successor of each simplex is built by adding one more vertex in a new dimension.
It turns out that another natural transition from $\Delta_k$ to $\Delta_{k+1}$ can be a pair consisting of the embedding as above together with a fixed affine projection $\map{p}{\Delta_{k+1}}{\Delta_k}$. Note that $p$ is actually determined by choosing a point $x_p \in \Delta_k$.
In any case, now our evolution system becomes much more complicated, once we insist on recording the projections.
One can explain this approach by assuming that each simplex is actually a very flexible geometric figure, so that choosing a point inside of it one can pull it out, obtaining a more complicated simplex-like figure, recording where the point initially was. The reverse of the procedure of pulling out is affine, of course.

This evolution system clearly has the amalgamation property. An evolution with the absorption property leads to the \emph{Poulsen simplex}, the unique (up to affine homeomorphism) metrizable simplex (contained in the Hilbert space) whose set of vertices is dense.

Contrary to the previous examples, the Poulsen simplex contains all finite-dimensional simplices, in fact every inverse limit of finite-dimensional simplices with affine projections is affinely homeomorphic to a face of the Poulsen simplex. The fact that the Poulsen simplex carries more information about the evolution system than the other objects, like the Cantor set or the pseudo-arc is just illusion. An explanation is very simple: The transitions  
in the system producing the Poulsen simplex are capable of recording the history, namely, each simplex $\Delta_k$ appears as a concrete face of $\Delta_m$ for every $m>k$.
This is not the case in the previous example of glass polygons, where the transitions actually change the particles, by breaking them into smaller ones. 

This example is treated in detail in the work of Kwiatkowska and the first author (see \cite{KubKw}). For a categorical treatment of simplices we refer to Gabriel and Zisman's monograph~\cite{GZ67}.

\section{Preliminaries}\label{SecThreee}

We adapt the convention which many category-theorists support, namely, that arrows are more important than objects. Thus, a category $\fC$ will be identified with its class of arrows and $\fC(A,B)$ will denote the set of all $\fC$-arrows with domain $A$ and codomain $B$. For our purposes it will be sufficient to assume that all categories are locally small, therefore $\fC(A,B)$ is indeed a set, not a proper class.
The class of $\fC$-objects will be denoted by $\ob{\fC}$. The composition of $\fC$-arrows $\map f A B$ and $\map g B C$ will be denoted by $g \cmp f$.

By a \define{sequence}{sequence} in a category $\fC$ we mean a covariant functor from $\nat$ (treated as a linearly ordered category) into $\fC$. Sequences will be denoted by $\vec x$, $\vec a$, etc. Given a sequence $\map{\vec x}{\nat}{\fC}$, we denote $X_n = \vec x(n)$ and $x_n^m$ the arrow from $X_n$ to $X_m$ ($n \loe m$).
When $X \in\ob{\fC}$ is the colimit of $\vec x$, we write $X = \lim \vec x$ and we denote by $x_n^\infty$ the colimiting arrows from $X_n$ to $X$. In particular, $x_n^\infty = x_m^\infty \cmp x_n^m$ whenever $n \loe m$.

We say that $\fC$ has the \define{amalgamation property}{amalgamation property} if for every $\fC$-arrows $f$, $g$ with $\dom(f)=\dom(g)$ there exist $\fC$-arrows $f'$, $g'$ such that $f' \cmp f = g' \cmp g$. In Section~\ref{SecFourr} we shall consider a specialized variant of the amalgamation property involving transitions.

For undefined notions concerning category theory we refer to Mac Lane's monograph~\cite{MacLane}.

\subsection{Evolution systems}

An \define{evolution system}{evolution system} is a structure of the form $\Ee = \seq{\uv, \Tau, \iza}$, where $\uv$ is a category, $\iza$ is a fixed $\uv$-object, called the \define{origin}{origin}, and $\Tau$ is a class of $\uv$-arrows called \define{transitions}{transition}.
First, $\Tau$ contains identities of all $\uv$-objects. Then it is natural to require that transitions postcomposed with isomorphisms on the codomain side are transitions. Formally: $h \cmp t$ is a transition whenever $t\in \Tau$ and $h$ is an isomorphism in $\uv$ such that $h \cmp t$ is defined. It follows that all isomorphism of $\uv$ are naturally in $\Tau$.

We are interested in \define{evolutions}{evolution}, namely, sequences of the form
$$\iza = E_0 \to E_1 \to \cdots \to E_n \to \cdots$$
where each of the arrows above is a transition.
The category $\uv$ serves as the universe of discourse and the minimal assumption here is that every evolution has a colimit in $\uv$. If this is not the case, then one can always artificially add the colimits of all evolutions, making $\uv$ sufficiently big.
{While this property is connected to cocomplete categories (as in the work of Kelly \cite{Kel80}), the requirement of cocompleteness is too strong for our purposes. We only require the category to have colimits of evolutions. Moreover, we consider only monomorphisms as we want to preserve distinctness of objects along the sequence.

Contrary to the aforementioned paper of Kelly, we are not necessarily interested in the notion of reflectivity (or of a factorization system\footnote{The work \cite{Gar09} of Garner is somewhat similar in spirit. It is worth noticing that those are two different languages focused on two different notions: Garner says that "The concept of factorisation system provides us with a way of viewing a category C as a compositional product of two subcategories $L$ and $R$." Whereas we want to view our category as a family of sequences, whose colimits construct the new bigger objects.}) as our focus lies on transitions as arrows that are indecomposable, although this distinction may not be evident from the definition. It is important to note that the development of our theory and the proofs of relevant theorems do not require specifying the indecomposable or prime nature of the arrows we are interested in. However, it is worth emphasizing that the motivating examples that led us to this work are based on systems where the composition of two non-trivial transitions is no longer considered a transition. We believe that systems where the class of transitions $\Tau$ is closed under composition can be overly complicated and unpredictable to deal with,  or conversely, they may resemble categories too closely to bring anything new to the table.
}

Given a $\uv$-object $X$, we denote
$$\Tau(X) = \setof{f \in \Tau}{\dom(f) = X},$$
that is, the set of all transitions with domain $X$.
Two transitions $f, g \in \Tau(X)$ are \emph{isomorphic} if there is an isomorphism $h$ in $\uv$ such that $g = h \cmp f$.
A transition will be called \define{nontrivial}{transition!-- nontrivial} if it is not an isomorphism, and \emph{trivial} otherwise. We denote by $\Tau^+(X) = \Tau(X)\setminus \Iso(X)$ the set of all non-trivial transitions with $\dom(f) = X$.

Denote by $\cTau$ the category generated by $\Tau$, namely the smallest subcategory of $\uv$ containing $\Tau$. Note that $\cTau$ is wide, i.e., it contains all the objects of $\uv$, because all identities are transitions.
The $\cTau$-arrows will be called \define{paths}{path}. Specifically, a \emph{path} is any arrow of the form $f_0 \cmp \dots\ \cmp f_{n-1}$ where each $f_i$ is a transition.
Formally, there might be a confusion with the notion of a path, as in graph theory this should be a \emph{sequence} of arrows or transitions, while we have decided to use the name \emph{path} for a finite composition of transitions. Thus, a concrete path may have several representations as the composition of transitions (including isomorphisms)\footnote{We believe this little inaccuracy will not lead to any confusion.}. 
The \define{length}{path!-- length} of a path $f$, denoted by $\length(f)$, is the minimal number of non-trivial transitions needed to compose $f$. In particular, isomorphisms are paths of length zero and nontrivial transitions are paths of length one.
An object $X$ of $\cTau$ will be called \define{finite}{finite object} if there exist 
a path from the origin to $X$.
The minimal length of such a path will be called the \define{size}{size} of $X$, denoted by $\size(X)$.
In particular, $\size(\iza) = 0$.

Finally, we denote by $\eva{\Ee}$ the category of all finite objects with paths, namely, compositions of transitions.
Note that $\eva{\Ee}$ can also be regarded as an evolution system (a subsystem of $\Ee$), suitably restricting the class of transitions, although it would fail the minimal assumption, as typically the colimit of an evolution is not a finite object.

\pustka{Given $\Ee$ it is natural to consider two related categories. 
\begin{enumerate}
	\item The category denoted by $\eva{\Ee}$, with finite objects and paths. Note that it is a subcategory of $\Ee$ and an evolution system itself, although it would fail the minimal assumption, as typically the colimit of an evolution is not a finite object.
	\item The  category denoted by $\Eva{\Ee}$ where objects are colimits of all evolutions and as arrows take precisely the colimiting arrows, uniquely determined by taking cocones over evolutions. \textcolor{red}{?} (let us remind that every evolution is a functor $\map{F}{\nat}{\uv}$). Namely, given two evolutions $\vec x, \vec y$ such that $X_\infty = \lim \vec x$ and $Y_\infty = \lim \vec y$ we have $\Eva{\Ee}$-arrow $\map{f_\infty}{X_\infty}{Y_\infty}$. Note that every finite object is an object of $\Eva{\Ee}$, because  we can always choose identities at each step from some point on. Therefore all transitions and all paths are $\Eva{\Ee}$-arrows and we have that $\eva{\Ee} \subs \Eva{\Ee} \subs \uv$.
\end{enumerate} }

\subsection{Fundamental examples}

Let $\uv$ be a fixed category. Consider $\fK \subs \uv$ to be a wide subcategory consisting of monomorphisms. We say that a $\fK$-arrow $f$ is \define{prime}{prime} if it cannot be non-trivially decomposed, namely, whenever $f=g_1\cmp g_2$, where $g_1, g_2 \in \fK$, at least one of $g_1, g_2$ is an isomorphism.

For instance let $\uv = \pair \Qyu \leq$ with all linear mappings as arrows. As $\fK$-arrows we take all embeddings, and among them, one-point extensions would be prime arrows.  

Below we present other natural examples of evolution systems illustrating this idea.

\begin{ex}[Embeddings]\label{EXtrzijedna}
	Let $\Ef$ be a class of finite structures in a fixed first-order language consisting of relations only. It is convenient to assume $\Ef$ is closed under isomorphisms.
	Let $\ciagi{\Ef}$ denote the class of all structures of the form $\bigcup_{\ntr}X_n$, where $\ciag{X}$ is a chain in $\Ef$.
	Let $\uv$ be the category of all embeddings between structures in $\ciagi{\Ef}$. Let $\Tau$ consist of all embeddings of the form $\map f X Y$, where $Y \setminus \img fX$ is a singleton or the empty set. In other words, transitions are one-point extensions and isomorphisms.
	Finally, $\iza$ might be the empty structure.
	Clearly, $\Ee = \seq{\uv, \Tau, \iza}$ is an evolution system.
	
	Note that we can also define $\uv$ to be the category of all homomorphisms between $\ciagi{\Ef}$-objects.
	Yet another option is to consider embeddings or homomorphisms between arbitrarily large structures that can be built as unions of directed families consisting of structures from $\Ef$. One can also replace the empty structure by any (possibly large) structure, declaring it to be the origin $\iza$.
	
	Finally, one can generalize this by allowing algebraic operations in the language. Now a transition would be an embedding $\map f X Y$ such that $Y$ is generated by $\img fX \cup \sn a$ for some $a \in Y$. In this case, however, transitions may not be prime.
	A very concrete example here could be the class of all finite fields, where it is natural to define the origin $\iza$ as the $p$-element field, where $p$ is a fixed prime. By this way, the category $\eva{\Ee}$ consists of all finite fields of characteristic $p$.
\end{ex}

\begin{ex}[Quotient epimorphisms]
	Let $\Ef$ be a fixed class of finite nonempty relational structures and consider it as a category where the arrows are epimorphisms. A concrete example could be just finite sets with no extra structure.
	Define transitions to be epimorphisms $\map f X Y$ such that either $f$ is an isomorphism (a bijection) or else there is a unique $y \in Y$ with a non-trivial $f$-fiber and moreover $f^{-1}(y)$ consists of precisely two points.
	Define $\uv$ to be the opposite category, so that $f \in \uv$ is an arrow from $Y$ to $X$ if it is an epimorphism from $X$ onto $Y$.
	Then $\Ee = \seq{\uv, \Tau, \iza}$ is an evolution system, where $\iza$ is a prescribed finite structure in $\Ef$.
\end{ex}

\begin{ex}[Posets]
	Let $\poset = \pair P\loe$ be a partially ordered set with a fixed element $\perp$.
	We may assume $\perp$ is minimal and $\poset$ is well founded. In that case it is natural to say that a pair $\pair xy$ is a transition if either $x=y$ or else $x < y$ and there is no $z$ with $x < z < y$. By this way, $\poset$ becomes an evolution system with origin $\perp$. 
	Recall that every poset (in fact, every quasi-ordered set) is a category in which the arrows are pairs $\pair xy$ with $x \loe y$ and identities are pairs $\pair xx$.
	In our case, a nontrivial evolution is a sequence
	$$\perp = x_0 < x_1 < \dots < x_n < x_{n+1} < \cdots$$
	such that no $z \in P$ is strictly between two consecutive elements.
	Finite objects are those that can be reached from $\perp$ by finitely many transitions. For instance, if $\poset$ is a tree and $\perp$ is its root, then finite objects are those living on the finite levels.
\end{ex}

\begin{ex}[Directed graphs]
	Let $G$ be a directed graph with a fixed vertex $\iza$ and let $\uv$ be the free category over $G$, namely, the category of all paths in $G$. Let $\Tau$ denote the arrows of $G$, treated as paths of length one, together with all the identities of $\uv$. Then $\Ee = \seq{\uv, \Tau, \iza}$ is clearly an evolution system. The original arrows of $G$ can be reconstructed from $\Ee$ as nontrivial transitions.	
\end{ex}

\begin{ex}[Monoids]
	A monoid $\Em = \seq{M, \cdot, 1}$ is just a category with a single object $M$, whose arrows are the elements of $M$ and $\cdot$ is the composition. It can be turned into an evolution system by selecting any subset $T \subs M$ as the set of transitions.
	The only requirement is that $T$ contains all invertible elements. Evolutions may still lead to something new. A concrete example is the multiplicative monoid $\seq{\Zee \setminus\sn0, \cdot, 1}$, where perhaps the most natural choice for the transitions are all prime numbers (plus the two invertible elements $-1,1$).
	An evolution may be eventually constant one, which corresponds to a concrete natural number. Otherwise, it corresponds to a so-called \emph{super-natural} number, namely, a formal infinite product of nonnegative powers of primes
	$$\prod_{p \in \poset} p^{\al(p)},$$
	where $\poset$ denotes the set of all primes and $\al(p) \in \Nat\cup \sn \infty$. Note that $p^\infty$ means that the prime $p$ occurs infinitely many times in the evolution.
	Of course, the most complicated evolution corresponds to $\prod_{p\in \poset}p^\infty$.
	
	Non-zero integers with multiplication actually encode all embeddings of the group $\pair{\Zee}{+}$ into itself (an embedding is determined by the image of $1$). Thus, super-natural numbers correspond to sequences of self-embeddings of $\pair{\Zee}{+}$. Their colimits are torsion-free abelian groups whose all finitely generated subgroups are cyclic. The most complicated one is $\pair{\Qyu}{+}$, corresponding to $\prod_{p\in \poset}p^\infty$.
	
	{This example fits the nature of works by Hines (\cite{Hin99, Hin16}), as the main concern of both of them is only monoidal categories. We believe that our theory is much more flexible and general, as it allows constructing generic (or self-similar) objects of various nature, not necessarily of monoidal one.}
\end{ex}

\koment{\begin{ex}[Unit intervals \mrak{(draft)}]
		Let $\uv$ be a category in which the only object is the unit interval (compact space) $\unii$ and arrows are continuous surjections $\map f \unii \unii$ satisfying $f(0) = 0, f(1) = 0$. Let us recall the canonical tent map 
		$$
		t(x)= \begin{cases}
			2x,&\text{if }x\in [0, \frac{1}{2}],\\
			2-2x,&\text{if }x\in (\frac{1}{2}, 1].
		\end{cases}
		$$
		We define distorted tent maps to be non-trivial transitions, namely mappings, that are not necessarily piece-wise linear, but it holds that there exists a unique $x_0\in \unii$ for which $f(x_0) = 1$. In addition, the map is strictly increasing on $(0, x_0)$ and strictly decreasing on $(x_0, 1)$.
		
		The system is regular and determined, simply because every distorted tent map can be transformed into the canonical one using increasing (\textcolor{red}{?}) homeomorphism so every two non-trivial transitions are isomorphic. 
		
		A typical evolution is of the following form
		$$
		\begin{tikzcd}
			\theta = \unii & \unii\ar[l, "t"', twoheadrightarrow] & \unii\ar[l, "t"', twoheadrightarrow] & \ldots\ar[l, "t"', twoheadrightarrow]
		\end{tikzcd}
		$$
		The transition amalgamation property holds trivially, system is locally countable (locally finite even), so by Theorem ~\ref{THMexistencjal} (\textcolor{red}{not stated yet!}) the evolution above has the absorption property. Its colimit, which is actually an inverse limit, is Knaster's continuum, also called the bucket handle---an indecomposable compact connected Hausdorff space.
		
		\mrak{REFERENCE}
	\end{ex}
}

At this point, the Reader may have noticed that every category can be easily converted to an evolution system:

\begin{ex}[Categories]
	Fix an arbitrary category $\uv$ and fix a $\uv$-object $\iza$. Define $\Tau = \uv$.
	Then $\Ee = \seq{\uv, \Tau, \iza}$ is obviously an evolution system. It is perhaps a bit more interesting when $\iza$ is weakly initial in $\uv$ (that is, $\uv(\iza,X) \nnempty$ for every $X \in \ob{\uv}$). In any case, this shows that every category can be easily converted to an evolution system, just by fixing the origin.
\end{ex}

This example may suggest that evolution systems are so general that perhaps they give no new insight. One of our goals is to convince the readers that this is not the case.

\separator

The examples above, except the last two, perfectly fit into the language of model theory, where one-point extensions can be phrased by using quantifier-free 1-types (at least in relational languages). The two examples below show other possibilities, focusing on the idea of evolutions. The second one is fairly general, showing a natural way of building new evolution systems from old ones.

\begin{ex}
	Let $\uv$ be the category of graphs and let $\iza$ be the graph with a single vertex.
	Fix an integer $k > 0$ and declare a transition $\map t G{G'}$ to be a graph embedding such that either $G' = \img tG$ or else $G' = \img tG \cup v$, where $v$ is connected to at most $k$ vertices of $\img tG$.
	Replacing $\iza$ with a graph of at least $k$ vertices, we could change the definition of a transition, requiring the new vertex be adjacent to exactly $k$ vertices.
	In case $k = 1$, evolutions produce trees with no cycles. In the general case, evolutions produce graphs with no complete subgraph of size $> k$.	
\end{ex}

\begin{ex}\label{lewostrony}
	Fix an evolution system $\Ee = \seq{\uv, \Tau, \iza}$. Define a new system
	$$\Ee^\ddagger = \seq{\uv, \Tau^\ddagger, \iza},$$
	where $\Tau^\ddagger$ consists of all transitions $t \in \Tau$ that are left-invertible in $\uv$, that is, $g \cmp t$ is the identity for some $g \in \uv$.
	Obviously, the system $\Ee^\ddagger$ depends very much on the universe $\uv$. In particular, if $\uv$ is a category of first-order structures with embeddings, only isomorphisms are left-invertible. On the other hand, once $\uv$ consists of all homomorphisms, then left-invertibility becomes a natural property of embeddings. One can also consider a more precise variant, where instead of left-invertible arrows one considers the category of embedding-projection pairs, making the left inverses part of the structure. We refer to~\cite[Section 6]{Kub40} for details on categories of embedding-projection pairs in the context of \fra\ limits.
	
	To be more concrete, let $\uv$ be the category of graphs with all homomorphisms and let $\iza$ be the singleton graph. We assume that the graph relations are reflexive (each vertex has a loop), so that one can collapse edges. Let $\Tau$ consist, as usual, of all one-point extensions. The system $\Ee^\ddagger$ is completely different from the system of graphs $\Ee$, as evolutions produce only some graphs. For instance, cycles of length $>4$ are not finite objects in $\Ee^\ddagger$.
	This system and its relatives have actually been developed by Geschke, G{\l}\c{a}b and the first author in~\cite{GGK}.
	
	Yet another concrete case is the system $\Ee$, where $\uv$ is the category of Banach spaces (either real or complex) with non-expansive operators and $\iza$ is the trivial space.
	Evolutions in $\Ee$ lead to all separable Banach spaces, while evolutions in $\Ee^\ddagger$ lead to Banach spaces with monotone Schauder bases, see~\cite{Diestel} for details.
\end{ex}

\section{Generic evolutions}\label{SecFourr}

In this section we show that, under some natural assumptions, there exists an evolution with the absorption property and it is unique up to isomorphism of the colimits.
Next we show that such an evolution is ``the most complicated one'' and it can be characterized as a generic evolution in terms of a natural infinite game.

Most of the results are adaptations of the classical theory of universal homogeneous structures, due to \fra~\cite{Fraisse}, developed in the fifties of the last century in the context of model theory. Game-theoretic approach is due to Krawczyk and the first author~\cite{KubBM, KraKub}.
Throughout this section we assume that $\Ee = \seq{\uv, \Tau, \iza}$ is a fixed evolution system.

\subsection{Crucial properties -- amalgamation and absorption}

The first tool for proving the existence of a generic evolution is the amalgamation property, which has been considered many times in pure and applied category theory. Below we state its variant involving transitions, often more convenient to check in concrete examples.

\begin{df}[Local transition amalgamation property]
	We say that $\Ee$ has the \define{local transition amalgamation property}{local transition amalgamation property} (or lTAP) if for every two transitions $f$ and $g$ with $\dom(f)=\dom(g)=A \in \ob{\eva{\Ee}}$ there exist further transitions $f'$, $g'$ such that $f' \cmp f = g' \cmp g$, that is, the following square
	$$\begin{tikzcd}
		A \ar[r, "f"] \ar[d, "g"'] \nx B \ar[d, "f'"] \\
		C \ar[r, "g'"'] \nx D
	\end{tikzcd}$$
	is commutative.
\end{df}

It is worth noticing that the word \textit{local} in the definition above refers to the domain object $A$ being finite. It might seem more natural to define the amalgamation property for all transitions, not restricting to finite objects. The local transition amalgamation property  is relevant for the existence of a generic evolution, yet it generally does not imply the amalgamation property for arbitrary objects. While it holds true in most natural examples, this is not true in general. Nonetheless, throughout this work we will focus purely on the local transition amalgamation property. Consequently, we will no longer include the term \textit{local} and will use the acronym TAP instead of lTAP.
\pustka{cex: grupy skonczone,  amalgamacja dziala - ich granice to grupy lokalnie skonczone, amalgamacja nie dziala, see neumann and neumann}

The next simple lemma is actually quite important.

\begin{lm}\label{LMsodsod}
	TAP implies that category $\eva{\Ee}$ has the amalgamation property.
	More precisely, if $f,g$ are $\eva{\Ee}$-arrows with $\dom(f)=\dom(g)$, then there exist $\eva{\Ee}\text{-arrows}$ $f',g'$ such that $f' \cmp f = g' \cmp g$. Furthermore, $\length(f) \geq \length(g')$ and $\length(g) \geq \length(f')$.
	In particular, if $f$ is a transition then $g'$ is a transition.
\end{lm}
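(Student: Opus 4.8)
The plan is to run an induction on the total length $\length(f)+\length(g)$, using the TAP to amalgamate single transitions and pasting the resulting squares together, while reducing the work with two elementary bookkeeping facts about length that I would record first: length is subadditive under composition, $\length(\beta\cmp\alpha)\le\length(\alpha)+\length(\beta)$, and it is unchanged by composing with an isomorphism on either side, $\length(u\cmp\alpha)=\length(\alpha)=\length(\alpha\cmp u)$ for $u$ an isomorphism (both follow by concatenating minimal decompositions). I would also note at the outset that everything stays inside $\eva{\Ee}$: since $\dom(f)=\dom(g)=A$ is a finite object, any object obtained from $A$ by appending transitions is again finite, so all the arrows produced below are genuine $\eva{\Ee}$-arrows.

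For the inductive step, assume $\length(f),\length(g)\ge 1$; by the symmetry of the statement in $(f,f')$ and $(g,g')$ I may assume $\length(g)\ge 2$. Fix a minimal decomposition of $g$ and split off its codomain-side transition, writing $g=b\cmp a$ with $b$ a single nontrivial transition and $\length(a)=\length(g)-1$. Applying the induction hypothesis to $f$ and $a$ (whose total length is $\length(f)+\length(g)-1$) gives $f_1,a'$ with $f_1\cmp f=a'\cmp a$, $\length(f_1)\le\length(a)$ and $\length(a')\le\length(f)$; then applying it again to $a'$ and $b$ (total length $\le\length(f)+1<\length(f)+\length(g)$), which share the domain $A_\bullet=\cod(a)=\dom(b)$, gives $a'',b'$ with $a''\cmp a'=b'\cmp b$, $\length(a'')\le\length(b)=1$ and $\length(b')\le\length(a')$. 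Pasting the two squares
$$\begin{tikzcd}
A \ar[r, "f"] \ar[d, "a"'] \nx B \ar[d, "f_1"] \\
A_\bullet \ar[r, "a'"] \ar[d, "b"'] \nx P \ar[d, "a''"] \\
C \ar[r, "b'"'] \nx D
\end{tikzcd}$$
and setting $f'=a''\cmp f_1$, $g'=b'$ yields $f'\cmp f=g'\cmp g$, while $\length(g')=\length(b')\le\length(a')\le\length(f)$ and $\length(f')\le\length(a'')+\length(f_1)\le 1+(\length(g)-1)=\length(g)$, exactly as required. The recursion bottoms out at two base cases: when one of $f,g$ is an isomorphism (length $0$) I complete the square directly by transport, e.g.\ if $g$ is an isomorphism I take $f'=\id{}$ and $g'=f\cmp g^{-1}$; and when $\length(f)=\length(g)=1$ I write $f=t_f\cmp r_f$, $g=t_g\cmp r_g$ with $t_f,t_g$ nontrivial transitions and $r_f,r_g$ isomorphisms, transport along $r_f,r_g$ to obtain two transitions with a common domain, and apply the TAP to them.

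I expect the delicate point to be the ``furthermore'' clause, i.e.\ keeping the length inequalities under control, and in particular the careful treatment of \emph{trivial} transitions. The TAP is stated only for honest transitions, so the degenerate squares in which one edge is an isomorphism must be filled by hand; here one needs that composing a transition with an isomorphism again yields a path of the same length, so that triviality propagates across each square (a trivial bottom forces a trivial top, a trivial left forces a trivial right). This propagation is precisely what converts the crude bounds ``$\length(f')\le$ number of rows'' and ``$\length(g')\le$ number of columns'' into the sharp $\length(f')\le\length(g)$ and $\length(g')\le\length(f)$, and it is the only place where the closure of $\Tau$ under composition with isomorphisms is genuinely used.
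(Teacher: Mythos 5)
Your overall strategy --- induction on $\length(f)+\length(g)$, the two elementary facts about $\length$, and the double application of the inductive hypothesis in the step --- is exactly the ``easy induction on the length'' that the paper's one-line proof alludes to, and your inductive step, including the bookkeeping that yields $\length(f')\le\length(g)$ and $\length(g')\le\length(f)$, is correct and is a genuine service compared with the paper's level of detail.

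The gap is in the base case $\length(f)=\length(g)=1$. You write $f=t_f\cmp r_f$, $g=t_g\cmp r_g$ and propose to ``transport along $r_f,r_g$ to obtain two transitions with a common domain''. That transport is precisely \emph{regularity}: it requires that a transition \emph{pre}composed with an isomorphism be again a transition. The axioms of an evolution system only close $\Tau$ under \emph{post}composition with isomorphisms ($h\cmp t\in\Tau$ for $h$ an isomorphism); closure under precomposition is introduced later in the paper as a separate, optional property and is not available here. So $t_f$ and $t_g$ are based at the two merely isomorphic objects $\cod(r_f)$ and $\cod(r_g)$, and the TAP --- which applies only to honest transitions with literally the same domain --- cannot be invoked on this pair. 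The natural workaround (first amalgamate $t_f$ with the isomorphism $r_g\cmp r_f^{-1}$ by TAP, then amalgamate the resulting transition with $t_g$ by a second TAP square) does produce paths $f',g'$ with $f'\cmp f=g'\cmp g$ and $\length(g')\le 1$, but $f'$ comes out as a composite of two transitions which TAP gives you no right to choose trivial, so the sharp bound $\length(f')\le 1$ is lost. You should either record the extra hypothesis that every path of length one out of a finite object is of the form (isomorphism)$\,\cmp\,$(transition with that very domain) --- which holds under regularity and in all of the paper's examples --- or flag that your proof establishes the ``Furthermore'' clause only under that proviso. (In fairness, the paper's own proof reads ``Easy induction on the length'' and does not confront this point either; and everything before the ``Furthermore'' clause does survive, since the two-square workaround still amalgamates $f$ and $g$.)
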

\begin{pf}
	Easy induction on the length of $\eva{\Ee}$-arrows. 
$$\begin{tikzpicture}
        \node (A) at (0,0) {$\bullet$};
        \node (B) at (4.48,0)  {$\bullet$};
        \node (C) at (0,-2.75)  {$\bullet$};
        \node (D) at (4.48,-2.75)  {$\bullet$};
        
        \draw[->, ppath=3] (A) -- node[above]{$f$} (B);
        \draw[->, ppath=3] (A) -- node[left]{$g$} (C);
        \draw[->, ppath=3] (B) -- node[right] {$f'$} (D);
        \draw[->, ppath=3] (C) -- node[below] {$g'$} (D); 
    \end{tikzpicture}$$
 
\end{pf}

Next key property needed for an evolution to be the most complicated one, together with this particular kind of amalgamation property, is absorption.

\begin{df}[Absorption property for transitions]
	Let $\vec u$ be an evolution. We say that $\vec u$ has the \define{absorption property for transitions}{absorption property for transitions} if for every $\ntr$, for every transition $\map t {U_n}Y$ there are $m \goe n$ and a path $\map g Y {U_m}$ such that $g \cmp t = u_n^m$.
\end{df}

\begin{tikzcd}
	\vec u \colon \iza\ar[r, ppath=1.12] & \dots\ar[r] & {U_n}\ar[r] & {U_{n+1}} & \dots & {U_m}\ar[r] &\dots \\
	&& Y & 
	\ar[from=1-5, to=1-6]
	\ar["t"', from=1-3, to=2-3]
	\ar["{g}", swap, ppath=1.38, from=2-3, to=1-6]
	\ar[from=1-4, to=1-5]
	\ar[from=1-1, to=1-2]
	\ar["u_n^m", from=1-3, to=1-6, bend left=15]
\end{tikzcd}
\medskip

In other words, any transition going out of the evolution is ``absorbed'' at some point. Similarly, we can define the absorption property for paths. Namely consider $t$ in the previous definition to be a path instead of a transition.

\begin{df}[Path absorption property]
	An evolution $\vec u$ has the \define{path absorption property}{path absorption property} if for every $\ntr$, for every path $\map f {U_n}Y$ there exists a path $\map g Y{U_m}$ with $m \geq n$, such that $g \cmp f = u_n^m$.
\end{df}

One of the motivations of introducing the transition amalgamation property is that it gives equivalence of the absorption property for transitions and the path absorption property.

\begin{lm}\label{tratopath}
	Let $\Ee$ be an evolution system with the  transition amalgamation property and $\vec e$ be an evolution with the absorption property for transitions. Then $\vec e$ has the path absorption property.
\end{lm}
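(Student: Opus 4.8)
The plan is to induct on $\length(f)$, using the absorption property for transitions to eat the first transition of the path and Lemma~\ref{LMsodsod} to fold the remainder back into the evolution. I would prove the equivalent statement: for every $\ntr$ and every path $\map f{E_n}Y$ there exist $m \goe n$ and a path $\map g Y{E_m}$ with $g \cmp f = e_n^m$. Before starting I would record that all objects in sight are finite---$E_n$ lies on $\vec e$, and $Y$ together with every intermediate object is reached from $E_n$ by a path, hence from $\iza$---so every arrow below is a genuine $\eva{\Ee}$-arrow and Lemma~\ref{LMsodsod} is applicable.

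For the base case $\length(f)=0$ the arrow $f$ is an isomorphism, so I would simply take $m=n$ and $g=f^{-1}$, which is a path since isomorphisms lie in $\Tau$; then $g \cmp f = \id{E_n} = e_n^n$.

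For the inductive step I assume the claim for all paths of length $\loe k$ out of \emph{any} evolution object, and take $\map f{E_n}Y$ of length $k+1$. First I would split off the first transition, writing $f = f_1 \cmp t$ with $\map t{E_n}Z$ a single transition and $\map{f_1}ZY$ a path of length $k$. Applying absorption for transitions to $t$ gives $m_1 \goe n$ and a path $\map h Z{E_{m_1}}$ with $h \cmp t = e_n^{m_1}$. Now $f_1$ and $h$ emanate from the same finite object $Z$, so Lemma~\ref{LMsodsod} supplies paths $\map{f_1'}YW$, $\map{h'}{E_{m_1}}W$ with $f_1' \cmp f_1 = h' \cmp h$ and---this is the decisive point---$\length(h') \loe \length(f_1) = k$. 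Hence $h'$ is a path of length $\loe k$ out of the evolution, and the induction hypothesis yields $m \goe m_1$ and a path $\map{g'}W{E_m}$ with $g' \cmp h' = e_{m_1}^m$. I would then set $g = g' \cmp f_1'$ and verify
$$g \cmp f = g' \cmp f_1' \cmp f_1 \cmp t = g' \cmp h' \cmp h \cmp t = e_{m_1}^m \cmp e_n^{m_1} = e_n^m,$$
completing the induction.

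I expect the only real obstacle to be guaranteeing that the induction decreases the relevant quantity: after absorbing $t$ and amalgamating, the residual arrow $h'$ that still has to be absorbed must be genuinely \emph{shorter} than $f$, rather than an arbitrary path out of $\vec e$. This is exactly what the length-monotonicity clause of Lemma~\ref{LMsodsod} provides, and it is the reason the transition amalgamation property is assumed: it forces amalgamating a length-$k$ path against an evolution arrow to produce a path of length $\loe k$ on the evolution side. The remaining care is purely bookkeeping---checking that the decomposition splits off a single transition and that every object is finite, so that the amalgamation lemma indeed applies.
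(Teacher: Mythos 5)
Your proposal is correct and follows essentially the same route as the paper: peel off the leading transition, absorb it via the transition absorption property, amalgamate the tail with the absorbing path using Lemma~\ref{LMsodsod}, and use its length-monotonicity clause to see that the residual path out of the evolution is strictly shorter. The paper phrases this as an iterative procedure terminating after $\length(f)$ steps, while you package it as a formal induction on length and explicitly assemble the final absorbing arrow $g = g' \cmp f_1'$, but the substance is identical.
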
	
\pustka{\begin{pf}
	Let the system $\Ee$ and the evolution $\vec e$ be as above. Let $f_0 \in \eva{\Ee}$ be a path going out of $\vec e$ with $\dom(f_0)=E_{n_0}$. We want to show that we can absorb this path back to $\vec e$. We can assume that $f_0$ consists only of non-trivial transitions (except for the first one), because whenever there is an isomorphism along the composition, we can simply compose it with the previous transition until we obtain a non-trivial one. This way we obtain the shortest possible representation of $f_0$.
	
	So suppose $f_0=f_0' \cmp t_0$, where $t_0\in \Tau(E_{n_0})$ is a non-trivial transition. Let $k$ be the length of $f_0$. The case where $t_0$ is trivial is basically the same; the only difference is the length of the path $f_0$ (namely $k-1$ instead of $k$), so without loss of generality we can assume that $t_0$ is non-trivial. 
	
	First we absorb $t_0$ using the absorption property for transitions obtaining a path from $\cod(t_0)$ to $E_{n_1}$. Then using the transition amalgamation property, we obtain the path $f_1$ with $ \dom(f_1) = E_{n_1}$ of length $k-1$. This path  can be thought of as the shift of $f'_0$ (the remaining part of the path $f_0$) given using the Lemma~\ref{LMsodsod}. Again we decompose $f_1$ as $f'_1 \cmp t_1$, where $t_1 \in \Tau(E_{n_1})$ is non-trivial and we absorb it using the absorption property for transitions. 
	\medskip
	
	\pustka
 { \begin{tikzcd}
		E_{n_0}\ar[r, ppath=1.12]\ar[dr, "t_0"'] &\ldots\ar[r, ppath=1.12]& {E_{n_1}}\ar[dr, "t_1"']\ar[r, ppath=1.12] & {E_{n_2}}\ar[r, ppath=1.12] & \dots\ar[r, ppath=1.12] & \ldots\ar[r, ppath=1.12] &E_{n_k} \\
		{}&{}\ar[ur, ppath=1.1]\ar[ddrr, ppath=1.15, "f_0'"']&&{}\ar[ur, ppath=1.12]\ar[dr, "f_1'"', ppath=1.28]&& \\
		&&{}\ar[ur, ppath=1.33]&{}&{}&\\
		&&{}&{}\ar[uuurrr, ppath=1.32]&&
	\end{tikzcd}}

Then we continue this procedure, at each step obtaining a path going out of the evolution that is shorter by one transition than the previous one, to the point where the last composite of the initial path $f_0$ is absorbed.
\end{pf}}

\begin{pf}
Let the system $\Ee$ and the evolution $\vec e$ be as above and let $f_0 \in \eva{\Ee}$ be a path with $\dom(f_0)=E_{n_0}$. We want to show that $\vec e$ can absorb $f \in \Tau(E_n)$. We prove that by induction over the length of $f$. 

If $\length(f) \leq 1$, then we use the absorption property for transitions. Next, suppose that $\vec e$ can absorb paths of length $n$ and let $f \in \eva{\Ee}$ be a path with $\length(f) = n+1$ and domain $E_k$. We decompose it into $f = t \circ f_0$, where $t$ is a transition.
The following diagram visualizes the inductive step, explained below.
$$\begin{tikzpicture}
	\node (En1) at (-2,0) {$E_{k}$};
	\node (En2) at (4,0) {$E_{\ell}$};
	\node (En3) at (10,0) {$E_{m}$};
	\node (Y1) at (-0.6,-2) {$A$};
	\node (Y2) at (0.7,-4) {$B$};
	\node (X) at (5.25, -2) {$C$};
	
	\draw[->, ppath = 2.9] (En1) -- node[above] {$e_k^\ell$} (En2);
	\draw[->, ppath = 2.9] (En2) -- node[above] {$e_\ell^m$} (En3);
	
	\draw[->, ppath = 2.2] (En1) -- node[left] {$f_0$} (Y1);
	\draw[->] (En2) -- node[left] {$t'$} (X);
	\draw[->] (Y1) -- node[left] {$t$} (Y2);
	\draw[->, ppath = 3.3] (Y1) -- node[below] {$g_0$} (En2);
	\draw[->, ppath = 3.3] (Y2) -- node[below] {$g_1$} (X);
	\draw[->, ppath = 3] (X) -- node[below] {$g_2$} (En3);
	
\end{tikzpicture}
$$
The path $f_0$ is of length $n$, so by our inductive hypothesis there exist $\ell > k$ and a path $g_0$ into $E_\ell$ such that $g_0 \circ f_0 = e_k^\ell$. Now, using (the precise version of) Lemma~\ref{LMsodsod}, we obtain a path $g_1$ together with a transition $t'$ such that $g_1 \circ t = t' \circ g_0$, where $t' \in \Tau(E_k)$. Using again the absorption property for transitions, we obtain a path $g_2$ and an object $E_m$ such that $g_2 \circ t' = e_\ell^m$. Finally we have that $(g_2 \circ g_1) \circ f = e_k^m$.
\end{pf}

In Section~\ref{reg} we give an example showing that TAP is indeed crucial in this lemma. 

The last ingredient needed for the existence of an evolution with the absorption property is some kind of smallness, formally defined below.

\begin{df}
	We say that an evolution system $\Ee = \seq{\uv, \Tau, \iza}$ is \define{locally countable}{evolution system!-- locally countable} if for every finite object $X$ there is a countable subset of transitions $\Ef(X) \subs \Tau(X)$ such that for every transition $f \in \Tau(X)$ there is an isomorphism $h$ such that $h \cmp f \in \Ef(X)$.
\end{df}

We are now ready to state the main ``existential'' result.

\begin{tw}\label{THMexistencjal}
	Assume $\Ee$ is a locally countable evolution system with the transition amalgamation property.
	Then there exists a unique, up to isomorphism, evolution with the absorption property.
\end{tw}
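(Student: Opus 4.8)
The proof has two independent halves: existence of an evolution with the absorption property, and uniqueness of its colimit. For existence, the plan is to build $\vec e$ recursively with a bookkeeping (dovetailing) argument, starting from $E_0 = \iza$. The crucial observation is that essential countability reduces the absorption requirement to countably many tasks: for each object $E_n$ already placed on the sequence, it suffices to absorb each representative $f \in \Ef(E_n)$, because every transition $t$ out of $E_n$ has the form $t = h \cmp f$ with $h$ an isomorphism and $f \in \Ef(E_n)$, and an isomorphism is a path of length $0$; so if $g \cmp f = e_n^m$ then $g \cmp h^{-1}$ is a path witnessing absorption of $t$. I would therefore maintain a queue of pending pairs $(n,f)$ with $f \in \Ef(E_n)$, adding the (countably many) members of $\Ef(E_n)$ to the queue as soon as $E_n$ is constructed, and dovetail so that every pair is eventually processed.

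Processing a single task $(n,f)$ with $\map f{E_n}Y$ is where Lemma~\ref{LMsodsod} does the work. Let $\map{e_n^k}{E_n}{E_k}$ be the path to the current last object. Amalgamating the path $e_n^k$ with the transition $f$ via Lemma~\ref{LMsodsod} yields arrows $f'$, $g'$ with $f' \cmp e_n^k = g' \cmp f$; since $e_n^k$ is a path and $f$ is a transition, the remark following that lemma guarantees that $f'$ is itself a transition. Hence I may set $E_{k+1}$ to be the codomain, take $e_k^{k+1} = f'$ as the next connecting transition (so $\vec e$ remains an evolution), and observe that $g' \cmp f = f' \cmp e_n^k = e_n^{k+1}$, i.e. $g'$ absorbs $f$ at stage $k+1$. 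As the bookkeeping eventually processes every $(n,f)$, the resulting $\vec e$ has the absorption property for transitions.

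For uniqueness, let $\vec u$ and $\vec v$ be two evolutions with the absorption property. By the preceding lemma on path absorption (available since $\Ee$ has the TAP), both enjoy the path absorption property, and I would run a back-and-forth construction of a commuting ladder of paths $\map{a_k}{U_{m_k}}{V_{n_k}}$ and $\map{b_k}{V_{n_k}}{U_{m_{k+1}}}$ satisfying $b_k \cmp a_k = u_{m_k}^{m_{k+1}}$ and $a_{k+1} \cmp b_k = v_{n_k}^{n_{k+1}}$. Start with $m_0 = n_0 = 0$ and $a_0 = \id{\iza}$ (legitimate since $U_0 = V_0 = \iza$). Given $a_k$, apply path absorption of $\vec u$ to the path $a_k$ leaving $U_{m_k}$ to produce $b_k$; given $b_k$, apply path absorption of $\vec v$ to produce $a_{k+1}$, padding with further connecting transitions so that $m_k, n_k \to \infty$. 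The ladder commutes by construction, so the two compatible cocones it determines induce maps $\map{a_\infty}{\lim\vec u}{\lim\vec v}$ and $\map{b_\infty}{\lim\vec v}{\lim\vec u}$ whose composites restrict to the colimiting arrows on each level and are therefore mutually inverse; hence $\lim \vec u \iso \lim \vec v$.

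I expect the main obstacle to be the existence half, specifically the bookkeeping: the tasks to be absorbed live on objects $E_n$ that are only revealed as the construction proceeds, so the enumeration must be interleaved with the recursion, and one must verify carefully that the reduction to the countable sets $\Ef(E_n)$ genuinely captures \emph{all} transitions up to isomorphism. By contrast, the uniqueness argument is the standard abstract \fra\ back-and-forth and is routine once path absorption is invoked; the only minor point there is arranging cofinality $m_k, n_k \to \infty$ so that the induced colimit maps are honest isomorphisms.
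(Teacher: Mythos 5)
Your proposal is correct and follows essentially the same route as the paper: existence by a dovetailed bookkeeping over the countable representative sets $\Ef(E_n)$, absorbing each pending transition via the path--transition amalgamation of Lemma~\ref{LMsodsod} (whose remark guarantees the new connecting arrow is a transition), and uniqueness by the standard back-and-forth zigzag of absorbed paths whose colimit maps are mutually inverse. Your explicit justification that absorbing a representative $f\in\Ef(E_n)$ absorbs every transition isomorphic to it, and the remark on padding so that the indices tend to infinity, are details the paper leaves implicit but they match its argument.
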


\begin{pf}
	The existence can be proved by easy induction with a suitable bookkeeping.
	Uniqueness is a standard back and forth argument. Below we provide some details.
	
	\textbf{The existence}. We use the powerful fundamental property of the infinity: The set $\nat$ of non-negative integers can be decomposed into infinitely many infinite sets, say, $\nat = \bigcup_{\ntr}B_n$, where each $B_n$ is infinite and $B_i \cap B_j = \emptyset$ whenever $i \ne j$.
	The sets $B_n$ will be used for bookkeeping.
	
	Namely, we start at the origin $\iza$ by enumerating all transitions from $\iza$ (up to isomorphism) using the numbers from $B_0$. We use the first one to obtain the first step of our evolution $\map{e_0}{\iza}{E_1}$. We enumerate all the isomorphic types of transitions from $E_1$, using the numbers from $B_1$.
	
	At step $n$, we take the first transition $t$ from some $E_i$ with $i \loe n$ that was not considered yet and we define $\map{e_n}{E_n}{E_{n+1}}$ as the result of the amalgamation
	$$\begin{tikzcd}
		E_i \ar[rr, ppath=1.3] \ar[d, "t"'] \nx \nx E_n \ar[d, "e_n"] \\
		X \ar[rr, ppath=1.12] \nx \nx E_{n+1}
	\end{tikzcd}$$
	which is possible due to Lemma~\ref{LMsodsod}.
	Note that the horizontal arrows in the square above are compositions of transitions, namely, arrows of $\eva{\Ee}$.
	We enumerate all transitions from $E_{n+1}$ (up to isomorphism), using the set $B_{n+1}$.
	
	After infinitely many steps, we obtain an evolution
	$$\begin{tikzcd}
		\iza \ar[r] \nx E_1 \ar[r] \nx \cdots \ar[r] \nx E_n \ar[r] \nx \cdots
	\end{tikzcd}$$
	that has the absorption property, because each transition (up to isomorphism) has been taken into account.
	
	\textbf{Uniqueness}. Suppose $\vec u$, $\vec v$ are two evolutions with the absorption property.
	We start with the absorption of $\vec u$, obtaining a suitable $\eva{\Ee}$-arrow $\map {f_0}{V_0}{U_{k_0}}$. Then we use the absorption property of $\vec v$ to obtain a suitable arrow $\map{g_0}{U_{k_0}}{V_{\ell_1}}$. And so on. Instead of writing the technical details, we present the relevant (infinite) diagram
	$$\begin{tikzcd}
		U_0 \ar[r] \ar[d, "\id{\iza}"'] \nx U_{k_0} \ar[r] \ar[rd, "g_0"] \nx \cdots \ar[rrr] \nx \nx \nx U_{k_{n}} \ar[dr, "g_n"] \ar[r] \nx \cdots\\
		V_0 \ar[r] \ar[ru, "f_0"] \nx \cdots \ar[r] \nx V_{\ell_1} \ar[r] \nx \cdots \ar[r] \nx V_{\ell_{n}} \ar[ur, "f_{n}"] \ar[rr] \nx \nx V_{\ell_{n+1}} \ar[r] \nx \cdots
	\end{tikzcd}$$
	in which $U_0 = \iza = V_0$. Note that the sequences $\ciag f$ and $\ciag g$ converge to pairwise invertible arrows between the colimits $U_\infty = \lim \vec u$ and $V_\infty = \lim \vec v$, showing that $U_\infty$ and $V_\infty$ are isomorphic.	
\end{pf}

We shall see later (Example in section~\ref{reg}) that Theorem~\ref{THMexistencjal} is no longer valid when TAP is replaced by the amalgamation property of $\eva{\Ee}$.

{Our framework is somewhat similar to the one described in Leinster's aforementioned paper (\cite{Lei11}): construct a system, check whether this system has a universal solution and if so, find out how it looks like. Following his approach, a system is constructed using M-coalgebras for a suitable endofunctor of an equational system, then one looks for the universal family of spaces satisfying the equations. We are using transitions and their amalgamation to check whether there is an evolution with the absorption property and in the affirmative case we obtain the unique homogeneous object by taking the colimit of this particular sequence. To link those two approaches, Leinster says: ``Informally, an equational system is a system of equations in which each variable, representing a space, is equated to a colimit or gluing-together of the others. A universal solution of such a system is a solution with a particular universal property." This would correspond to the colimit of an evolution with the absorption property, and the equational system would be an evolution system.}

\subsection{Cofinality}

We now turn to universality, or rather \emph{cofinality}, as the term ``universal object'' has different meanings in category theory vs. model theory.
So, given a category $\fC$, we say that a $\fC$-object $U$ is \emph{cofinal} if for every $\fC$-object $X$ there is a $\fC$-arrow from $X$ to $U$. This becomes interesting when the $\fC$-arrows are some kinds of monics (in model theory, the most natural ones are embeddings).
Recall that we are working with a fixed evolution system $\Ee$ with the transition amalgamation property.

\begin{tw}[Cofinality]\label{THMkofinlnsct}
	Let $\vec u$ be an evolution with the absorption property.
	Given another evolution $\vec x$, there exists a $\uv$-arrow from $\lim \vec x$ to $\lim \vec u$.
\end{tw}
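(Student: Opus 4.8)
The plan is to build the arrow out of the colimit by constructing a coherent family of paths from the finite stages $X_n$ into the evolution $\vec u$, and then to invoke the universal property of $X_\infty = \lim \vec x$. Two tools drive the argument: the amalgamation property of $\eva{\Ee}$ with the length bounds of Lemma~\ref{LMsodsod}, and the fact that $\vec u$, having the absorption property for transitions, also enjoys the \emph{path} absorption property (this is the lemma proved above, valid under TAP). The only inductive invariant I must preserve is that every arrow I build is a path between finite objects, so that both tools keep applying.

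First I fix notation: since every evolution starts at the origin, $X_0 = \iza = U_0$. I construct an increasing sequence of indices $m_0 \loe m_1 \loe \cdots$ together with paths $\map{f_n}{X_n}{U_{m_n}}$ satisfying the coherence condition
$$f_{n+1} \cmp x_n^{n+1} = u_{m_n}^{m_{n+1}} \cmp f_n.$$
The base step takes $m_0 = 0$ and $f_0 = \id{\iza}$.

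For the inductive step, suppose $\map{f_n}{X_n}{U_{m_n}}$ is given. The transition $\map{x_n^{n+1}}{X_n}{X_{n+1}}$ and the path $f_n$ share the domain $X_n$, and all objects involved are finite, so Lemma~\ref{LMsodsod} provides a finite object $W$ and paths $\map{p}{U_{m_n}}{W}$, $\map{q}{X_{n+1}}{W}$ with $p \cmp f_n = q \cmp x_n^{n+1}$. Now $p$ is a path going out of $\vec u$ at stage $m_n$, so the path absorption property yields $m_{n+1} \goe m_n$ and a path $\map{h}{W}{U_{m_{n+1}}}$ with $h \cmp p = u_{m_n}^{m_{n+1}}$. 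Setting $f_{n+1} = h \cmp q$ — again a path between finite objects — gives
$$f_{n+1} \cmp x_n^{n+1} = h \cmp q \cmp x_n^{n+1} = h \cmp p \cmp f_n = u_{m_n}^{m_{n+1}} \cmp f_n,$$
which is exactly the required coherence, with all invariants preserved.

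Finally, composing with the colimiting arrows $u_{m_n}^\infty$, the coherence condition shows that $\seqof{u_{m_n}^\infty \cmp f_n}{\ntr}$ is a cocone over $\vec x$ with vertex $U_\infty$, since
$$u_{m_{n+1}}^\infty \cmp f_{n+1} \cmp x_n^{n+1} = u_{m_{n+1}}^\infty \cmp u_{m_n}^{m_{n+1}} \cmp f_n = u_{m_n}^\infty \cmp f_n.$$
The universal property of $X_\infty = \lim \vec x$ then induces the desired $\uv$-arrow $\map{}{X_\infty}{U_\infty}$. There is no single hard obstacle here; the point demanding care is precisely to check the coherence square at each stage so that the family is a genuine cocone, and to keep every newly built arrow a path between finite objects, ensuring that both Lemma~\ref{LMsodsod} and path absorption remain applicable throughout the induction.
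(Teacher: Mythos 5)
Your proof is correct and follows essentially the same route as the paper: build the commuting ladder inductively by amalgamating $f_n$ with the transition $x_n^{n+1}$ (Lemma~\ref{LMsodsod}) and then absorbing the resulting leg back into $\vec u$, finally passing to the colimit. The only cosmetic difference is that you invoke the path absorption property for the leg $p$, whereas the paper notes that this leg is in fact a transition (by the length bound in Lemma~\ref{LMsodsod}) and uses the absorption property for transitions directly.
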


\begin{pf}
	Our goal is to obtain an infinite sequence of $\eva{\Ee}$-arrows $f_0, f_1, \dots$, so that the following infinite diagram
	$$\begin{tikzcd}
		\vec u: & U_0 \ar[r] & U_{k_1} \ar[r] & \cdots \ar[r] & U_{k_{n-1}} \ar[r] & U_{k_n} \ar[r] & \cdots \\
		\vec x: & X_0 \ar[u, "f_0"] \ar[r] & X_1 \ar[u, "f_1"] \ar[r] & \cdots \ar[r] & X_{n-1} \ar[u, "f_{n-1}"] \ar[r] & X_n \ar[u, "f_n"] \ar[r] & \cdots
	\end{tikzcd}$$
	is commutative. 
	Given $f_{n-1}$, in order to find $f_n$, we first amalgamate $f_{n-1}$ and $\map{x_{n-1}^{n}}{X_{n-1}}{X_n}$,
	obtaining a commutative square
	$$\begin{tikzcd}
		U_{k_{n-1}} \ar[r, "t"] & Y \\
		X_{n-1} \ar[u, "f_{n-1}"] \ar[r, "x_{n-1}^n"'] & X_n \ar[u, "g"']
	\end{tikzcd}$$
	in which $t$ is a transition. Next we use the absorption property of $\vec u$ so that $f_n$ is the composition of $g$ and a suitable $\eva{\Ee}$-arrow $h$ satisfying $h \cmp t = u_{k_{n-1}}^{k_n}$.
	Finally, the colimiting arrow $\map{f_\infty}{\lim \vec x}{\lim \vec u}$ witnesses that $\uv(\lim \vec x, \lim \vec u) \nnempty$.
\end{pf}

\subsection{Homogeneity}

We fix an evolution $\vec u$ with the absorption property. Let $U = \lim \vec u$. Recall that $u_n^\infty$ denotes the colimiting arrow from $U_n$ to $U$.
A \define{trajectory}{trajectory} is an arrow of the form $u_n^\infty \cmp f$, where $\ntr$ and $f \in \eva{\Ee}$.

\begin{tw}[Homogeneity]
	Assume $X$ is a finite object and $\map {i,j} X U$ are trajectories. Then there exists an automorphism $\map h U U$ such that $j = h \cmp i$.
\end{tw}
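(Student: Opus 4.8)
The plan is to construct $h$ by a back-and-forth argument, exactly in the spirit of the uniqueness part of Theorem~\ref{THMexistencjal}, but \emph{anchored} at the finite object $X$ so that the resulting isomorphism carries $i$ to $j$. Write the two trajectories as $i = u_n^\infty \cmp f$ and $j = u_m^\infty \cmp g$, where $\map f X {U_n}$ and $\map g X {U_m}$ are $\eva{\Ee}$-arrows. I will produce two cofinal subsequences $U_{a_0} \to U_{a_1} \to \cdots$ and $U_{b_0} \to U_{b_1} \to \cdots$ of $\vec u$ (both therefore having colimit $U$) together with $\eva{\Ee}$-arrows $\map{\phi_k}{U_{a_k}}{U_{b_k}}$ and $\map{\psi_k}{U_{b_k}}{U_{a_{k+1}}}$ satisfying the coherence identities $\psi_k \cmp \phi_k = u_{a_k}^{a_{k+1}}$ and $\phi_{k+1} \cmp \psi_k = u_{b_k}^{b_{k+1}}$. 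These identities make $(\phi_k)_k$ and $(\psi_k)_k$ into morphisms of cocones (for instance $\phi_{k+1} \cmp u_{a_k}^{a_{k+1}} = \phi_{k+1} \cmp \psi_k \cmp \phi_k = u_{b_k}^{b_{k+1}} \cmp \phi_k$), so they induce mutually inverse $\uv$-arrows $h, h' \colon U \to U$; in particular $h$ is an automorphism.

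For the base step I seed the ladder at $X$. Put $a_0 = n$. Since $f$ and $g$ share the domain $X$, Lemma~\ref{LMsodsod} amalgamates them: there are a finite object $W$ and $\eva{\Ee}$-arrows $\map{f'}{U_n}{W}$, $\map{g'}{U_m}{W}$ with $f' \cmp f = g' \cmp g$. Applying the path absorption property of $\vec u$ (available since $\Ee$ has the TAP) to the path $g'$ going out of $U_m$, I obtain $b_0 \goe m$ and a path $\map s {W}{U_{b_0}}$ with $s \cmp g' = u_m^{b_0}$. Setting $\phi_0 \defi s \cmp f' \colon U_n \to U_{b_0}$ then yields the \emph{anchoring identity}
$$\phi_0 \cmp f = s \cmp f' \cmp f = s \cmp g' \cmp g = u_m^{b_0} \cmp g,$$
which is the only place where the specific maps $f,g$ enter.

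The inductive back-and-forth step is a direct application of path absorption: given $\phi_k$, absorbing the path $\phi_k$ into $\vec u$ produces $a_{k+1}$ and $\psi_k$ with $\psi_k \cmp \phi_k = u_{a_k}^{a_{k+1}}$, and absorbing $\psi_k$ in turn produces $b_{k+1}$ and $\phi_{k+1}$ with $\phi_{k+1} \cmp \psi_k = u_{b_k}^{b_{k+1}}$. Postcomposing with further transitions along $\vec u$ whenever needed (which preserves the coherence identities, since $u_{a_{k+1}}^{a_{k+1}'} \cmp \psi_k$ still absorbs $\phi_k$) forces both $a_k$ and $b_k$ to be strictly increasing, so both subsequences are cofinal and both colimits equal $U$. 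Passing to the colimit, $h$ satisfies $h \cmp u_{a_0}^\infty = u_{b_0}^\infty \cmp \phi_0$, whence, using $a_0 = n$ and the anchoring identity,
$$h \cmp i = h \cmp u_n^\infty \cmp f = u_{b_0}^\infty \cmp \phi_0 \cmp f = u_{b_0}^\infty \cmp u_m^{b_0} \cmp g = u_m^\infty \cmp g = j,$$
as required. The only genuine subtlety, exactly as in Theorem~\ref{THMexistencjal}, is the bookkeeping ensuring cofinality of both index sequences and the verification that the coherence identities pass to the colimit; the conceptual heart is merely the seeding step, which converts the span $(f,g)$ into a single anchored rung $\phi_0$.
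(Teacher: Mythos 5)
Your argument is correct and follows essentially the same route as the paper: a back-and-forth ladder between two (cofinal subsequences of) copies of $\vec u$, seeded by amalgamating the two arrows out of $X$ via Lemma~\ref{LMsodsod} and absorbing the result into the sequence, with the colimiting arrow giving the automorphism. The paper leaves the anchoring identity and the coherence/colimit verifications implicit ("built inductively, using the absorption property"), whereas you spell them out; no substantive difference.
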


\begin{pf}
	Let us recall the infinite diagram from the proof of uniqueness (Theorem~\ref{THMexistencjal}):
	$$\begin{tikzcd}
		U_0 \ar[r] \ar[d, "\id{\iza}"'] \nx U_{k_0} \ar[r] \ar[rd, "g_0"] \nx \cdots \ar[rrr] \nx \nx \nx U_{k_{n}} \ar[dr, "g_n"] \ar[r] \nx \cdots\\
		V_0 \ar[r] \ar[ru, "f_0"] \nx \cdots \ar[r] \nx V_{\ell_1} \ar[r] \nx \cdots \ar[r] \nx V_{\ell_{n}} \ar[ur, "f_{n}"] \ar[rr] \nx \nx V_{\ell_{n+1}} \ar[r] \nx \cdots
	\end{tikzcd}$$
	Note that the same inductive arguments can be used when the sequence $\vec v$ is replaced by $\vec u$ and $\id{\iza}$ is replaced by any $\eva{\Ee}$-arrow.
	Next, we replace $U_0$ by $X$, knowing that $i = u_{k_0}^\infty 
	\cmp i'$ and $j = u_{\ell_0}^\infty \cmp j'$ for some $k_0, \ell_0$.
	Finally, we have the following infinite diagram
	$$\begin{tikzcd}
		X \ar[r, "i'"] \ar[d, "j'"'] \nx U_{k_0} \ar[r] \ar[rd, "g_0"] \nx \cdots \ar[rrr] \nx \nx \nx U_{k_{n}} \ar[dr, "g_n"] \ar[r] \nx \cdots\\
		U_{\ell_0} \ar[r] \ar[ru, "f_0"] \nx \cdots \ar[r] \nx U_{\ell_1} \ar[r] \nx \cdots \ar[r] \nx U_{\ell_{n}} \ar[ur, "f_{n}"] \ar[rr] \nx \nx U_{\ell_{n+1}} \ar[r] \nx \cdots
	\end{tikzcd}$$
	built inductively, using the absorption property. The colimiting arrow is the required automorphism.
\end{pf}

In model theory (see Example~\ref{EXtrzijedna}), homogeneity is often phrased as follows: Every isomorphism between finitely generated substructures extends to an automorphism of the whole structure. This holds when the class of structures under consideration is \emph{hereditary}, that is, closed under finitely generated substructures and isomorphisms.
On the other hand, the construction from Example~\ref{lewostrony} applied to a system of first-order structures with embeddings gives a different kind of homogeneity: Isomorphisms between substructures that could appear in generic evolutions consisting of left-invertible embeddings extend to automorphisms. Summarizing, homogeneity may sometimes be a bit far from our intuition, nevertheless it still plays a crucial role when studying the automorphism group of the colimit of a generic evolution.

\subsection{The abstract Banach-Mazur game}\label{SUBSngojweoig}

Fix an evolution system $\Ee$, for the moment with no extra properties.
We define the following game $\BMG{\Ee}{U}$ for two players, say, \emph{Eve} and \emph{Odd}. Here, $U$ is a fixed $\uv$-object.
The rules are as follows.
Eve starts the game by choosing a transition $\map {e_0}{\iza}{A_0}$.
Odd responds with a transition $\map{e_1}{A_0}{A_1}$. Eve responds with a transition $\map{e_2}{A_1}{A_2}$; Odd responds with a transition $\map{e_3}{A_2}{A_3}$. And so on.
Note that the rules for both players are identical.
After infinitely many steps, the players build an evolution $\vec a$. We say that \emph{Odd wins} if the colimit of $\vec a$ is isomorphic to $U$. Otherwise, \emph{Eve wins}. 
Such a game is well-known in model theory, see~\cite{HodgesGames} and~\cite{Jouko}.

\begin{df}
	An object $U$ is \define{generic}{generic object} if Odd has a winning strategy in the game $\BMG{\Ee}{U}$.
\end{df}

Note that a generic object is unique up to isomorphism (as long as it exists, of course).
The reason is simple: Assuming $U$, $V$ are generic, Odd can use a winning strategy aiming at $U$ while at the same time Eve can use Odd's strategy aiming at $V$. Playing such a game, the colimit is isomorphic to both $U$ and $V$.

\begin{tw}
	Assume $\Ee$ is locally countable and has the TAP.
	The following conditions are equivalent.
	\begin{enumerate}[itemsep=0pt]
		\item[{\rm(a)}] Odd has a winning strategy in $\BMG{\Ee}{U}$.
		\item[{\rm(b)}] $U$ is the colimit of an evolution with the absorption property.
	\end{enumerate}
\end{tw}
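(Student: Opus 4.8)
The plan is to prove both implications by exhibiting explicit strategies and using the characterization of the generic evolution from Theorem~\ref{THMexistencjal}.

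For the direction (b)$\Rightarrow$(a), suppose $U = \lim \vec u$ for some evolution $\vec u$ with the absorption property. I would describe a winning strategy for Odd in $\BMG{\Ee}{U}$ that makes Odd ``build a copy of $\vec u$ along the way'' while absorbing whatever Eve plays. Concretely, Odd maintains inductively a commutative diagram linking the evolution $\vec a$ being played to an initial segment of $\vec u$: after Eve's move producing $A_{2k}$, Odd has at hand an $\eva{\Ee}$-arrow from $A_{2k}$ into some $U_{m}$. On Odd's turn, given Eve's latest transition into $A_{2k}$, Odd first amalgamates it with the arrow into $\vec u$ (using the TAP, via Lemma~\ref{LMsodsod}), then invokes the absorption property of $\vec u$ to push the result back onto the main line of $\vec u$, and finally plays a transition $e_{2k+1}$ realizing the next needed step. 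Iterating, Odd simultaneously (i) keeps absorbing into $\vec u$ and (ii) ensures cofinally that every $U_n$ is eventually reached, so that the colimit of $\vec a$ is forced to be isomorphic to $U$. The bookkeeping here mirrors exactly the back-and-forth construction in the uniqueness proof of Theorem~\ref{THMexistencjal}.

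For the direction (a)$\Rightarrow$(b), suppose Odd has a winning strategy. The key observation is that Eve may play \emph{adversarially} so as to force the absorption property into the resulting evolution. Using essential countability, fix for each finite object $X$ a countable list $\Ef(X)$ of transitions capturing all isomorphism types. Eve adopts a diagonal bookkeeping strategy: she enumerates all pairs (index $n$, isomorphism type of a transition from the current $A_n$) and, at her turns, systematically schedules each such transition to be ``tested''. Since after any finite play the colimit is still undetermined and Eve controls cofinally many moves, she can arrange that every transition out of every $A_n$ eventually appears as one of Eve's moves (up to isomorphism). Against this Eve-play, Odd's winning strategy still yields an evolution $\vec a$ whose colimit is $U$ (because Odd wins); but the way Eve forced transitions to appear guarantees that each such transition is followed, finitely later, by a path returning to the main line — which is precisely the absorption property for transitions. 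Hence $\vec a$ witnesses that $U$ is the colimit of an evolution with the absorption property.

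The main obstacle I expect is the (a)$\Rightarrow$(b) direction, specifically making Eve's adversarial strategy rigorous: one must check that Eve can interleave her ``testing'' moves without being blocked by Odd's intervening moves, and that the transition tested at $A_n$ is genuinely absorbed rather than merely appearing. The clean way to handle this is to have Eve, at stage $n$, pick the scheduled transition $t$ from (a copy of) the relevant earlier object and play it directly — since the rules for both players are identical, Eve is free to realize any transition she likes — and then rely on Odd's winning response to close up the amalgamation square back onto the evolution. One should also verify uniqueness-type consistency, namely that the finite-object indexing and the ``up to isomorphism'' clause in essential countability let Eve cover all types with a single $\omega$-enumeration; this is routine once the bookkeeping sets $B_n$ (as in Theorem~\ref{THMexistencjal}) are introduced. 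The reverse direction is comparatively mechanical, being a direct transcription of the existence-and-uniqueness machinery already established.
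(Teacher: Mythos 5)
Your top-level architecture for (a)$\Rightarrow$(b) is the right one --- Eve plays so as to force the absorption property into $\vec a$, while Odd's winning strategy forces $\lim\vec a \iso U$ --- but the mechanism you describe for Eve does not work as stated. Eve cannot ``pick the scheduled transition $t$ from (a copy of) the relevant earlier object and play it directly'': her move at stage $n$ must be a transition whose domain is the \emph{current} object, whereas the scheduled $\map{t}{A_i}{B}$ has domain an earlier $A_i$, and a transition out of $A_i$ is not in general a transition out of $A_n$. Worse, you propose to ``rely on Odd's winning response to close up the amalgamation square,'' but Odd's strategy is a black box: all you are entitled to use is that the resulting colimit is isomorphic to $U$, not that any particular square gets completed by his moves. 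The correct, self-contained move is the one the paper uses: Eve amalgamates $t$ with the path $\map{a_i^n}{A_i}{A_n}$ via Lemma~\ref{LMsodsod} (TAP), which --- since $t$ has length at most one --- yields a \emph{transition} $\map{e_{n+1}}{A_n}{A_{n+1}}$ together with a path $\map{g}{B}{A_{n+1}}$ satisfying $g \cmp t = a_i^{n+1}$; she plays $e_{n+1}$. This single move of hers witnesses the absorption of $t$ with no cooperation from Odd, and essential countability lets her bookkeep all isomorphism types of transitions from all the $A_i$.

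For (b)$\Rightarrow$(a) you take a genuinely different route from the paper: a direct back-and-forth between the play $\vec a$ and the fixed evolution $\vec u$. The paper instead has Odd use exactly the Eve-strategy above, so that $\vec a$ acquires the absorption property, and then the uniqueness half of Theorem~\ref{THMexistencjal} gives $\lim \vec a \iso U$ outright. Your version can be made to work, but your sketch omits the genuinely delicate parts: to conclude $\lim\vec a\iso U$ you need not only arrows $A_{n_k}\to U_{m_k}$ but also the returning arrows $U_{m_k}\to A_{n_{k+1}}$ with commuting triangles (otherwise you only get one colimiting arrow, not an isomorphism), and Odd must realize each such possibly long absorbing path one transition per turn while Eve's interleaved moves keep shifting its domain; termination of that interleaving rests on the length estimate in Lemma~\ref{LMsodsod}. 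I would recommend replacing this direction with the paper's argument, which obtains it for free from uniqueness.
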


\begin{pf}
	(b)$\implies$(a) Suppose we are given a finite step of the game
	$$\begin{tikzcd}
		\iza = A_0 \ar[r, "e_1"] \nx A_1 \ar[r, "e_2"] \nx \cdots \ar[r, "e_n"] \nx A_{n}
	\end{tikzcd}$$
	and it is Odd's turn (that is, $n$ is even). He chooses $i < n$ and a transition $\map{t}{A_i}{B}$. He responds with a transition $\map{e_{n+1}}{A_n}{A_{n+1}}$ realizing an amalgamation of $t$ and the given path from $A_i$ to $A_n$ (namely, the composition $e_n \cmp \dots \cmp e_{i+1}$).
	This strategy is winning as long as Odd makes a suitable bookkeeping, so that all transitions from $A_i$s are taken into account. This is possible, since $\Ee$ is locally countable.
	
	(a)$\implies$(b) Suppose Odd has a winning strategy in $\BMG{\Ee}{U}$. Eve can use the strategy described above, so that the resulting evolution has the absorption property. This shows that $U$ is the colimit of an evolution with the absorption property.
\end{pf}

By the result above, a \emph{generic evolution} would be the one whose colimit is a generic object in the sense described above. Thus, once our evolution system has the transition amalgamation property and is locally countable, a generic evolution is precisely the one having the absorption property for paths. Without amalgamations, a generic evolution may still exist. In pure category theory this has been treated in~\cite{Kub61}; in model theory generic structures have been characterized by Krawczyk and the first author in~\cite{KraKub}.
Nevertheless, in the framework of evolution systems the following problem is open.

\begin{problem}
	Let $\Ee$ be a locally countable evolution system and let $W$ be a countable object, that is, the colimit of an evolution. Characterize the existence of a winning strategy of the second player in $\BMG{\Ee}{W}$.
\end{problem}

Once we modify the game by allowing playing with paths instead of transitions, the game becomes purely category-theoretic and then the result of~\cite{Kub61} solves the problem above. Namely, the second player has a winning strategy if and only if $\eva{\Ee}$ has the weak amalgamation property and $W$ is the colimit of an evolution with the weak absorption property. The \emph{weak amalgamation property} means: Given a finite object $F$ there is a path $\map e F{F'}$ such that for every paths $\map f {F'}X$, $\map g {F'}Y$ there are paths $\map {f'}XZ$, $\map {g'}YZ$ satisfying $f' \cmp f \cmp e = g' \cmp g \cmp e$.

$$\begin{tikzpicture}
        \node (F) at (-1.2, 1.2) {$F$};
        \node (F') at (0,0) {$F'$};
        \node (X) at (2,0) {$X$};
        \node (Y) at (0,-2) {$Y$};
        \node (D) at (2,-2) {$Z$};

        \draw[->] (F) -- node[above]{$e$} (F');
        \draw[->] (F) edge[bend left=20] node[above]{$f \circ e$} (X);
        \draw[->] (F) edge[bend right=20] node[left]{$g \circ e$} (Y);
        \draw[->] (F') -- node[above]{$f$} (X);
        \draw[->] (F') -- node[left]{$g$} (Y);
        \draw[->] (X) -- node[right]{$f'$} (D);
        \draw[->] (Y) -- node[below]{$g'$} (D);        
    \end{tikzpicture}$$

The \emph{weak absorption} property of an evolution is defined accordingly, namely given an evolution $\vec u$ we say that it has the weak absorption property if given a finite object $U_n$ of $\vec u$ there is $m>n$ such that for every path $\map f {U_m}Y$ there exist $k \geq m$ and a path $\map g Y{U_k}$, such that $g \cmp f \cmp u_n^m = u_n^k$.

$$
\begin{tikzcd}
    \vec u \colon \iza\ar[r, ppath=1.1] & {U_n} & {U_m}\ar[r] & \dots & {U_k}\ar[r, ppath=1.12] &\dots \\
	&& Y & 
	\ar[from=1-5, to=1-6]
	\ar["f "', from=1-3, to=2-3,  ppath=1.3]
	\ar["{g}", swap, ppath=1.38, from=2-3, to=1-5]
	\ar[from=1-4, to=1-5]
	\ar[from=1-1, to=1-2]
	\ar["u_n^k", from=1-2, to=1-5, bend left=15]
       \ar["u_n^m"', from=1-2, to=1-3]
\end{tikzcd}
$$

The game described above can be seen as a special case of the determinacy game, often referred to as a Gale-Stewart game (see \cite{GS}), where the players engage with natural numbers. 
More formally, consider a subset $A$ of the Baire space consisting of $\omega$-sequences of natural numbers. Then in the game $G_A$, Eve plays a natural number $a_0$, then Odd responds $a_1$, then Eve chooses $a_2$, and so on. A transition would be precisely adding one natural number. Then Eve wins the game if and only if $\langle a_{0},a_{1},a_{2},\ldots \rangle \in A$ and otherwise Odd wins. The game $G_A$ is determined if there is a winning strategy for one of the players: Eve or Odd. According to the Borel determinacy theorem from \cite{Borel}, any Gale–Stewart game is determined as long as the set $A$ is a Borel set.


\separator

The original Banach-Mazur game was invented by Mazur in the thirties of the last century, played with open intervals of the real line. It was later generalized to arbitrary topological spaces by Choquet, therefore it is also known under the name \emph{Choquet game}. We refer to the survey article~\cite{Rastislav} for detailed information on infinite topological games and to~\cite{KubBM, KraKub} for a recent study of the model-theoretic variant of the Banach-Mazur game. We also refer to the monographs~\cite{HodgesGames, Jouko} for more general infinite games in model theory.

\section{Further properties of evolution systems}\label{SecRwrtngsTrms}

The reader may have already noticed that evolution systems resemble abstract rewriting systems (see e.g.~\cite{Huet}), namely, structures of the form $\bX = \pair{X}{\to}$, where $\to$ is a binary relation, called \emph{rewriting} or \emph{reduction}. The reflexive-transitive closure of $\to$ gives a quasi-ordering of $X$, therefore $\bX$ becomes a category. The only missing ingredient is the origin, which could be any element of $X$. 

Rewriting systems are meant to model processes like reducing certain expressions (term rewriting) or processes (graph rewriting), and so on. An important feature is \emph{determination}, namely the property of an evolution saying that at each step we have only one non-trivial transition to go with. Next important notion  would be \emph{termination} which, in our language, says that every evolution ``stops'' in the sense that, from some point on, all possible transitions are isomorphisms. Another useful notion is \emph{confluence}, which corresponds precisely to the amalgamation property of the category of finite objects with paths. 

We shall now formalize these concepts in the language of evolution systems and we prove a natural extension of the important Newman's Lemma
(also called the \emph{diamond lemma}, see~\cite{Newman} and~\cite{Huet}) saying that in a terminating system local confluence implies the global one.

\begin{df}
	An evolution system $\Ee$ is \define{confluent}{confluent system} if the category $\eva{\Ee}$ of finite objects with paths has the amalgamation property.
	
	The system $\Ee$ is called \define{locally confluent}{locally confluent system} if for every finite object $X$, for every two transitions $f,g \in \Tau(X)$ there exist paths $f',g'$ such that $f' \cmp f = g' \cmp g$.
\end{df}
$$
\begin{tikzcd}
	&&\text{local confluence} &&& \text{confluence}\\
	& \bullet\ar["f'", drr, ppath=1.17] &&&& \bullet\ar[drr, ppath=1.12]&&\\
	X\ar["f", ur]\ar["g"', dr] &&& \bullet & X\ar[ur, ppath=1.15]\ar[dr, ppath=1.12] &&&\bullet&\\
	&\bullet\ar["g'"', urr, ppath=1.17] &&&& \bullet\ar[urr, ppath=1.12] &&
\end{tikzcd}
$$
Note that local confluence is formally weaker than the TAP, while confluence is needed for the theory of generic evolutions.
In fact, the informal introduction of this note starts with the definition of local confluence.

Before we proceed let us go back to the abstract Banach-Mazur game and point out a link with confluent evolution systems.

\begin{tw}
	Assume $\Ee$ is a confluent evolution system with an evolution $\vec u$ having the path absorption property. Its colimit $\lim \vec u$ is generic in the sense of the Banach-Mazur game. Furthermore, it is homogeneous with respect to $\eva{\Ee}$ and cofinal in the category of colimits of evolutions (cf. Theorem~\ref{THMkofinlnsct}).
\end{tw}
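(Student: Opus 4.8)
The statement bundles three assertions about $U=\lim\vec u$: genericity, homogeneity with respect to $\eva{\Ee}$, and cofinality. All three are driven by the same engine as the corresponding results of Section~\ref{SecFourr}, except that the two ingredients used there --- the transition amalgamation property and the absorption property for transitions --- are replaced by their ``path'' counterparts, namely the amalgamation property of $\eva{\Ee}$ (which is precisely confluence) and the path absorption property of $\vec u$. Every back-and-forth argument in Section~\ref{SecFourr} only ever amalgamates two $\eva{\Ee}$-arrows and then folds one leg back into the fixed evolution, so each such argument survives once ``transition'' is read as ``path''. The plan is to re-run the three proofs in this enlarged setting and to isolate the single genuinely new point, which occurs in the game-theoretic assertion.

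Cofinality is the mildest case. Given an arbitrary evolution $\vec x$, I would reproduce the ladder from the proof of Theorem~\ref{THMkofinlnsct}: starting from $\id{\iza}\colon X_0\to U_0$ and, given $f_{n-1}\colon X_{n-1}\to U_{k_{n-1}}$, amalgamate it with $x_{n-1}^{n}$ by confluence to obtain a commutative square with legs $t$ on the $\vec u$-side and $g$ on the $\vec x$-side, then fold $t$ back by the path absorption property to get a path $h$ with $h\cmp t=u_{k_{n-1}}^{k_n}$, and set $f_n=h\cmp g$. The induced colimiting arrow is the required $\uv$-arrow $\lim\vec x\to U$. Homogeneity copies the proof of the Homogeneity theorem: writing the two trajectories as $i=u_{p}^{\infty}\cmp i'$ and $j=u_{q}^{\infty}\cmp j'$ with $i',j'\in\eva{\Ee}$ emanating from the finite object $X$, one first amalgamates $i'$ and $j'$ by confluence and then alternately folds the two legs back into $\vec u$ by the path absorption property, exactly as in the uniqueness diagram of Theorem~\ref{THMexistencjal}, now anchored at $X$ rather than at $\iza$. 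The resulting zig-zag telescopes to an automorphism $h\colon U\to U$ with $j=h\cmp i$.

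The substantial assertion is genericity, where the aim is a winning strategy for Odd in $\BMG{\Ee}{U}$ using the fixed template $\vec u$ itself for all bookkeeping. I would have Odd maintain a coherent back-and-forth ladder between the play $\vec a$ and $\vec u$. One direction is automatic: after each move (of either player) the last transition is a path out of the current image of $\vec u$, so amalgamating it by confluence and folding the $\vec u$-leg back by path absorption extends a compatible family $\phi_n\colon A_n\to U_{k_n}$, which assembles into a colimiting arrow $\Phi\colon\lim\vec a\to U$; post-composing with structure maps keeps $k_n\to\infty$. On its own turns Odd works, under a bookkeeping running through the \emph{countably many} indices $j\in\omega$, at realizing each piece $U_j$ of $\vec u$ inside the play: with $\psi_{j-1}\colon U_{j-1}\to A_m$ in hand, it amalgamates $u_{j-1}^{j}$ with $\psi_{j-1}$ by confluence to fix a target $W$ and a path $q\colon A_m\to W$, and then steers the play toward $W$. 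Completing these maps gives a colimiting arrow $\Psi\colon U\to\lim\vec a$, and the coherence of the ladder forces $\Phi$ and $\Psi$ to be mutually inverse, so that $\lim\vec a\iso U$ and Odd wins.

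The main obstacle is exactly this steering. Odd must traverse the path $q$ one transition at a time, while Eve interleaves arbitrary transitions, so the target keeps moving and a naive pursuit need not terminate. The crucial input is the length estimate of Lemma~\ref{LMsodsod}: if $q\colon A_m\to W$ is the remaining path of length $\ell$ and Eve plays a transition $e\colon A_m\to A_{m+1}$, then amalgamating $q$ with $e$ yields a new target $D$ with paths $q^{\flat}\colon W\to D$ and $e^{\flat}\colon A_{m+1}\to D$ such that $q^{\flat}\cmp q=e^{\flat}\cmp e$ and $\length(e^{\flat})\loe\length(q)=\ell$. Hence Eve's intrusion replaces the remaining path by $e^{\flat}$ without increasing its length, while each of Odd's moves along it decreases the length by one; so Odd reaches the (moving) target after finitely many of its turns, and since the play only grows, the map $\psi_j$ once established stays valid forever. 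Thus every $U_j$ is eventually realized, the bookkeeping exhausts $\omega$, and the back-and-forth closes up without any appeal to essential countability. I expect the most delicate point to be verifying that the coherence equations $\psi_i\cmp\phi_i=a_{n_i}^{n_{i+1}}$ and $\phi_{i+1}\cmp\psi_i=u_{k_i}^{k_{i+1}}$ genuinely survive the interleaving, which is why the two directions must be built as a single ladder rather than as two independent families; when the length estimate is unavailable, one falls back on the transition-by-transition bookkeeping used for the Banach-Mazur game in Section~\ref{SUBSngojweoig}, at the cost of assuming essential countability.
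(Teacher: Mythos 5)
Your treatment of cofinality and homogeneity is correct and coincides with what the paper intends: its entire proof is the single remark that one ``repeats the arguments from Section~\ref{SecFourr}, replacing transitions by paths'', and your two ladders (the cofinality ladder anchored at $\id{\iza}$ and the homogeneity zig-zag anchored at the finite object $X$) are exactly those repetitions, with confluence supplying the amalgamations and the path absorption property of $\vec u$ supplying the folding-back steps. You have also correctly isolated the one place where ``replace transitions by paths'' is not innocuous, namely Odd's strategy in $\BMG{\Ee}{U}$: a path can only be played one transition per turn, so Eve's interleaved moves keep displacing the target and one must argue that the pursuit terminates. The paper's one-line proof does not address this point at all, so your elaboration goes beyond it.

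However, your resolution of that point has a genuine gap. The length estimate you call ``the crucial input'' --- that amalgamating the remaining path $q$ with Eve's transition $e$ yields a new remaining path $e^\flat$ with $\length(e^\flat)\loe\length(q)$ --- is the ``furthermore'' clause of Lemma~\ref{LMsodsod}, and that lemma is proved \emph{from the TAP}, by tiling the amalgamation rectangle with transition squares. The present theorem assumes only confluence, i.e.\ the bare amalgamation property of $\eva{\Ee}$, which gives no control whatsoever on the lengths of the amalgamating paths; the TAP is strictly stronger and is not among the hypotheses. Without that inequality Odd's pursuit need not terminate: each of Eve's moves may lengthen the outstanding obligation by more than Odd's move shortens it. Your fallback does not repair this either: transition-by-transition bookkeeping under essential countability yields only the absorption property for transitions, which implies the path absorption property (what you need to run the back-and-forth against $\vec u$) again only via the TAP --- and essential countability is likewise not assumed here. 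To make the genericity clause work as you argue it, one needs either an added hypothesis guaranteeing length-non-increasing amalgamation (e.g.\ the TAP), or a reading of the Banach--Mazur game in which moves are paths, in which case the one-step absorption argument goes through verbatim.
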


The proof is simply repeating the arguments from Section~\ref{SecFourr}, replacing transitions by paths.
An example in the following section shows that an locally finite confluent system with only one object may fail to have a generic evolution. A necessary and sufficient condition for the existence of an evolution with the path absorption property is a purely category-theoretic statement of being \emph{countably dominated}, see~\cite{Kub40} for details.

\subsection{Regularity}\label{reg}

Recall that a transition post-composed with an isomorphism is also a transition. In many concrete examples of evolution systems, it is natural to require that transitions composed with isomorphisms on the domain side are transitions. Formally:

\begin{df}
	An evolution system $\Ee$ is \define{regular }{evolution system!-- regular} if $t \cmp h$ is a transition whenever $t$ is a transition and $h$ is an isomorphism such that $t \cmp h$ is defined.
\end{df}

Regularity implies that isomorphisms and transitions can be amalgamated:  given an isomorphism $\map{h}{X}{\tilde X}$ and a transition $\map f XY$, there is a transition $\map {f'}{\tilde X}Y$ and an isomorphism $\map{h'}{Y}{Y}$ with $h = h' \cmp f$. Namely, $f' = f \cmp h^{-1}$ and $h' = \id Y$.

\paragraph{Evolution system that is not regular}

In the following example we present a concrete confluent evolution system with a single object and a unique nontrivial transition, which is not regular. It apparently fails to have a generic evolution. Yet it admits an evolution with the absorption property for transitions.

Let $\uv$ be the monoid of all endomorphisms of $\iza = \pair{\Qyu}{<}$. Define
$$\Tau = \aut(\Qyu,<) \cup \sn e,$$ where 
$e$ is the unique nontrivial transition, namely, a fixed embedding $\map e \Qyu \Qyu$ such that $\img e \Qyu = \Qyu \setminus\sn0$. 
Clearly, $\Ee = \seq{\uv, \Tau, \iza}$ is an evolution system.
Every evolution is an infinite sequence of self-embeddings of $\Qyu$, therefore its colimit in the category of linearly ordered sets is isomorphic to $\Qyu$. Thus evolutions are convergent in $\Ee$.

Let us look at $e$ more closely. It actually does not matter that $0$ is not in the image of $e$. What matters is the irrational number corresponding to the gap $\pair{{e^{-1}}{(-\infty,0)}}{{e^{-1}}{(0,+\infty)}}$. Let us assume that this number is $\pi$.
So $e = h \cmp i_\pi$, where $i_\pi$ is the inclusion $\Qyu \subs \Qyu \cup \sn \pi$ and $\map{h}{\Qyu \cup \sn \pi}{\Qyu}$ is a fixed isomorphism such that $h(\pi) = 0$.
Note that given any isomorphism $\map g{\Qyu\cup\sn \pi}\Qyu$, the endomorphism $f = g \cmp i_\pi$ is isomorphic to $e$, because $e = h \cmp g^{-1} \cmp f$. This indeed shows that the image of $e$ is irrelevant, while the gap that gets filled plays a significant role. Indeed, if $x$ is an irrational different from $\pi$ then $h\cmp i_x$ is not isomorphic to $e$.

\begin{prop}
	The evolution
	$$\begin{tikzcd}
		\iza \ar[r, "e"] \nx \iza \ar[r, "e"] \nx \iza \ar[r, "e"] \nx \cdots
	\end{tikzcd}$$
	has the absorption property for transitions.
\end{prop}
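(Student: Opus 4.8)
The plan is to unwind the definition of the absorption property for transitions and then invoke the structural fact, established just above, that $e$ is the unique nontrivial transition up to isomorphism. Since $\uv$ has the single object $\iza = \Qyu$, every transition has domain and codomain $\Qyu$, and the bonding arrows of the evolution are all equal to $e$, so $u_n^m = e^{m-n}$ for all $n \loe m$. Hence the whole assertion reduces to a statement that no longer mentions $n$: for every transition $\map t \Qyu \Qyu$ there are an exponent $k \goe 0$ and a path $\map g \Qyu \Qyu$ with $g \cmp t = e^{k}$ (one then absorbs $t$ issued at stage $n$ by putting $m = n + k$, which works uniformly in $n$ since every $U_n$ equals $\Qyu$).

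First I would record which arrows are transitions. By definition $\Tau = \aut(\Qyu,<) \cup \sn e$, and the evolution-system axiom forces $\Tau$ to be closed under post-composition with automorphisms; together with the preceding analysis of the gap filled by $e$ this means that the nontrivial transitions out of $\Qyu$ are exactly the arrows isomorphic to $e$. Thus any transition $t$ falls into one of two cases. If $t$ is trivial, it is an automorphism, so $t^{-1}$ is an automorphism and hence a path; taking $k = 0$ gives $t^{-1} \cmp t = \id \Qyu = e^{0}$, so $t$ is absorbed without advancing the stage. If $t$ is nontrivial, then $t$ is isomorphic to $e$, which by the very definition of isomorphic transitions provides an automorphism $g$ with $g \cmp t = e$; since $g$ is a path, taking $k = 1$ yields $g \cmp t = e = e^{1}$. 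In either case the required path and exponent exist, and as $n$ and $t$ were arbitrary the evolution has the absorption property for transitions.

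There is no genuine obstacle here: all the difficulty has already been absorbed into the structural description of $e$, showing that the single irrational gap it fills pins it down up to isomorphism and hence that every nontrivial transition out of $\Qyu$ is isomorphic to $e$. The only points needing a moment's attention are the bookkeeping identity $u_n^m = e^{m-n}$ and the observation that an automorphism, being a trivial transition, counts as a path of length zero, so that both $t^{-1}$ and the witnessing isomorphism $g$ are legitimate absorbing paths. The genuinely interesting phenomenon—that this same evolution nonetheless fails the absorption property for paths—lies beyond this proposition and is exactly what motivates separating the two notions.
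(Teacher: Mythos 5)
Your proof is correct and follows essentially the same route as the paper's: split into the trivial case (absorb an automorphism by its inverse) and the nontrivial case (absorb anything isomorphic to $e$ by the witnessing automorphism, advancing one stage). You are in fact slightly more careful than the paper, which writes ``otherwise $f = e$'' where, strictly, the nontrivial transition need only be of the form $h \cmp e$ for an automorphism $h$ --- exactly the point your case analysis handles.
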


\begin{pf}
	Let $\map f \iza \iza$ be a transition. If $f$ is an automorphism then it is absorbed by its inverse. Otherwise, $f = e$ and then it is absorbed by the identity.
\end{pf}

Notice that paths of length $2$ are formally the form $e \cmp g$, where $g \in \aut \iza$. The automorphism $g$ can move the gap defining $\pi$ to an arbitrary gap, therefore up to isomorphism paths of length $2$ are of the form $h \cmp i_x$, where $x$ is an irrational and $h \in \aut \iza$.
Arbitrary paths are of the form $h \cmp i_S$, where $S \subs \Err \setminus \Qyu$ is finite, $\map{i_S}{\Qyu}{\Qyu \cup S}$ is the inclusion and $\map{h}{\Qyu \cup S}{\Qyu}$ is an isomorphism.

\begin{prop}
	There is no evolution with the path absorption property.
\end{prop}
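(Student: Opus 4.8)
The plan is to equip every path $\map f \Qyu \Qyu$ with a finite invariant recording which irrational gaps it fills, and then to exploit the elementary fact that a single evolution can only ever fill countably many gaps, whereas paths fill arbitrary ones.

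First I would fix notation using the description of paths already recorded above: each path $f$ equals $h \cmp i_S$ for a finite $S \subs \Err \setminus \Qyu$ and an isomorphism $\map h {\Qyu \cup S} \Qyu$. Passing to Dedekind completions, $f$ extends to an order-embedding $\bar f$, and I would define
$$\operatorname{Fill}(f) = \setof{x \in \Err \setminus \Qyu}{\bar f(x) \in \Qyu},$$
the set of irrational gaps of the domain that $f$ collapses onto rationals. A short check shows $\operatorname{Fill}(h \cmp i_S) = S$, so $\operatorname{Fill}(f)$ is a well-defined finite set depending only on $f$ and not on its representation (its cardinality is in fact $\length(f)$). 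Two facts about this invariant will drive the argument. The first is monotonicity under composition: $\operatorname{Fill}(f) \subs \operatorname{Fill}(g \cmp f)$ for all paths $f,g$, since if $\bar f(x)$ is already rational then so is $\bar g(\bar f(x))$, as $g$ maps rationals to rationals --- once a gap is filled it stays filled. The second is the converse richness, already noted in the text preceding the statement: for every irrational $x$ there is a path filling exactly $\set x$, namely $h \cmp i_x$, equivalently $e \cmp g$ for the automorphism $g$ carrying the gap $\pi$ to $x$.

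Next I would fix an arbitrary evolution $\vec u$ and argue it fails the path absorption property. All its objects equal $\Qyu$, and each $u_0^m$ is a path; by monotonicity the sets $\operatorname{Fill}(u_0^m)$ increase with $m$, so
$$F := \bigcup_{\ntr} \operatorname{Fill}(u_0^m)$$
is a countable union of finite sets, hence countable. As $\Err \setminus \Qyu$ is uncountable, I may choose an irrational $x \notin F$ and a path $f$ with $\operatorname{Fill}(f) = \set x$. If the path absorption property held (taking $n = 0$), there would be some $m \goe 0$ and a path $g$ with $g \cmp f = u_0^m$; then monotonicity would give $x \in \operatorname{Fill}(f) \subs \operatorname{Fill}(g \cmp f) = \operatorname{Fill}(u_0^m) \subs F$, contradicting $x \notin F$. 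Since $\vec u$ was arbitrary, no evolution has the path absorption property.

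The main obstacle is conceptual rather than computational: isolating the right invariant. Length is additive under composition and therefore never obstructs absorption, since a $g$ of the needed length always exists; the genuine obstruction is \emph{which} gaps get filled --- a countable set across the whole evolution pitted against an uncountable supply of irrationals. The only point requiring care is the verification that $\operatorname{Fill}$ is representation-independent and monotone, and for both the cleanest device is to read $f$ through its extension $\bar f$ to the completion, where ``filling a gap'' becomes the literal statement that an irrational maps to a rational.
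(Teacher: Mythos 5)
Your argument is correct and is essentially the paper's own proof: both count that any single evolution fills only countably many gaps, pick an irrational gap it misses, and exhibit a length-two path (an automorphism moving that gap onto the gap at $\pi$, followed by $e$) whose absorption would force that gap to be filled. The only quibble is a direction slip --- for $e \cmp g$ to fill the gap at $x$ you need $g$ to carry the gap at $x$ to the gap at $\pi$, not the other way around --- which is harmless since you can replace $g$ by $g^{-1}$; your explicit $\operatorname{Fill}$ invariant and its monotonicity just make precise what the paper leaves implicit.
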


\begin{pf}
	Fix an evolution
	$$\begin{tikzcd}
		\iza \ar[r, "t_0"] \nx \iza \ar[r, "t_1"] \nx \iza \ar[r, "t_2"] \nx \cdots
	\end{tikzcd}$$
	and let $f_n = t_{n-1} \cmp \dots \cmp t_0$.
	Then $\map {f_n} \iza \iza$ fills finitely many gaps in $\Qyu$ therefore the colimiting embedding $\map{f_\infty}{\iza}{\iza}$ fills countably many gaps.
	Fix an irrational $y$ that is not filled by $f_\infty$. Let $h \in \aut \iza$ be such that the inverse image of the gap defining $\pi$ is the gap defining $y$. The path $e \cmp h$ cannot be absorbed as this would lead to filling the gap defining $y$.
\end{pf}

The same argument shows that there is no generic evolution in $\Ee$. 
System $\Ee$ is not regular, because precomposing with isomorphism would formally lead to filling a different gap. For this reason, $\Ee$ does not have TAP, so the lemma~\ref{tratopath} does not hold.

Regularity will also play a significant role in the section on terminating systems. For the moment, we use it to make a short study of determined evolution systems.

\subsection{Determination}

One can have impression that evolution systems with at most one nontrivial transition at each object are rather mediocre and not interesting.
A good example here is the category of sets with transitions being one-to-one mappings ``adding'' at most one element. On the other hand, homogeneity of the limit of the generic evolution is far from being trivial, the group is $S_\infty$ --- the infinite countable permutation group.

Another example is the separable Hilbert space. It arises as the limit of an evolution on finite-dimensional Hilbert (euclidean) spaces, where the transitions are ``adding one more dimension''. Obviously, there is only one nontrivial transition from a given finite-dimensional Hilbert space, of course, up to a linear isometry.

The examples above share a common feature, formally described in the following definition.

\begin{df}
	A finite object of $\Ee$ is \emph{determined} if it admits at most one nontrivial transition, up to an isomorphism.
	An evolution system $\Ee$ is \define{determined}{evolution system!-- determined} if every finite object of $\Ee$ is determined.
	An evolution system $\Ee$ is \emph{eventually determined} if for every evolution $\vec e$ there is $n \in \nat$ such that $E_m$ is determined for every $m \geq n$.
\end{df}

\begin{claim}
	If $\Ee$ is regular, every finite object isomorphic to a determined object is also determined.
\end{claim}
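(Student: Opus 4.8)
The plan is to use regularity to transport every nontrivial transition leaving the isomorphic copy back to the determined object itself, where determination pins them down up to isomorphism, and then to cancel the isomorphism. First I would fix an isomorphism $\map{\phi}{X}{Y}$, where $X$ is the determined object and $Y \iso X$. Since every isomorphism of $\uv$ lies in $\Tau$, the arrow $\phi$ is a path (of length zero), so composing a path witnessing that $X$ is finite with $\phi$ produces a path from $\iza$ to $Y$; hence $Y$ is itself a finite object and it makes sense to ask whether it is determined.

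Now take any two nontrivial transitions $\map{t_1}{Y}{Z_1}$ and $\map{t_2}{Y}{Z_2}$, i.e.\ $t_1, t_2 \in \Tau^+(Y)$. The key step is to consider $t_1 \cmp \phi$ and $t_2 \cmp \phi$: by regularity (a transition precomposed with an isomorphism is again a transition) both are transitions with domain $X$. They are moreover nontrivial, for if $t_i \cmp \phi$ were an isomorphism then $t_i = (t_i \cmp \phi) \cmp \phi^{-1}$ would be a composite of isomorphisms, contradicting $t_i \in \Tau^+(Y)$. Thus $t_1 \cmp \phi,\, t_2 \cmp \phi \in \Tau^+(X)$.

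Since $X$ is determined, these two nontrivial transitions out of $X$ are isomorphic, so there is an isomorphism $h$ of $\uv$ with $t_2 \cmp \phi = h \cmp (t_1 \cmp \phi) = (h \cmp t_1) \cmp \phi$. Composing on the right with $\phi^{-1}$ (i.e.\ right-cancelling the isomorphism $\phi$) yields $t_2 = h \cmp t_1$, so $t_1$ and $t_2$ are isomorphic as transitions with domain $Y$. Hence $Y$ admits at most one nontrivial transition up to isomorphism, which is exactly the statement that $Y$ is determined.

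I do not expect any serious obstacle; the argument is essentially a two-line diagram chase. The only point that genuinely uses an added hypothesis is the passage $t_i \mapsto t_i \cmp \phi$: the basic axioms of an evolution system guarantee only that a transition \emph{postcomposed} with an isomorphism is a transition, whereas here I precompose, and it is precisely \emph{regularity} that makes this transport legal. The verifications that nontriviality is preserved and that $\phi$ may be right-cancelled are both immediate, so no further care is required.
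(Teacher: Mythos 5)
Your argument is correct and is essentially the same as the paper's proof: transport the two nontrivial transitions on $Y$ back to $X$ by precomposing with the isomorphism (using regularity), invoke determination of $X$, and cancel the isomorphism. Your explicit check that nontriviality is preserved under this transport is a small point the paper leaves implicit, but the route is identical.
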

\begin{proof}
	Let $X\in \ob{\Eva{\Ee}}$ be determined, $\map h X \tilde{X}$ be an isomorphism and transitions $t_0, t_1 \in \Tau^+(\tilde{X})$. By regularity there exist transitions $t_0\cmp h$ and $t_1 \cmp h$. But $X$ is determined, which means that there exist an isomorphism $\map g A B$ such that $t_1 \cmp h = g \cmp t_0 \cmp h$, but $h\in \Iso(X)$, so $t_1 = g\cmp t_0$. Therefore transitions $t_0, t_1$ are isomorphic, so $\tilde{X}$ is determined.
	$$
	\begin{tikzcd}
		&{}&A\\
		X\ar[r,"h","\iso"']\ar[drr, bend right=15]\ar[urr, bend left=15]&\tilde{X}\ar[ur, "t_0"']\ar[dr, "t_1"]&\\
		&{}&B\ar[uu, "g", "\iso"', bend right=10]
	\end{tikzcd}
	$$
\end{proof}
\begin{tw}
	A regular determined evolution system has the TAP and admits an evolution with the absorption property.
\end{tw}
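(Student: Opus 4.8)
The plan is to verify the TAP directly by a short case analysis on the two transitions, and then to deduce the existence of a generic evolution from Theorem~\ref{THMexistencjal} after observing that a determined system is automatically essentially countable.

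For the TAP, I would fix a finite object $A$ and transitions $\map f A B$ and $\map g A C$, and split into three cases. If $f$ is an isomorphism, regularity guarantees that $f' = g \cmp f^{-1}$ is again a transition; taking $g' = \id C$ then yields $f' \cmp f = g \cmp f^{-1} \cmp f = g = g' \cmp g$. The case in which $g$ is an isomorphism is symmetric, with the roles of $B$ and $C$ exchanged. The remaining case is when both $f$ and $g$ are nontrivial, and this is exactly where determination enters: since $\Tau^+(A)$ forms a single isomorphism class, $f$ and $g$ are isomorphic as transitions, i.e.\ $g = h \cmp f$ for some isomorphism $\map h B C$. Setting $f' = h$ and $g' = \id C$ --- both of which are transitions --- closes the commutative square $f' \cmp f = g' \cmp g$. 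This establishes the TAP in full.

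For the existence of an evolution with the absorption property, I would invoke Theorem~\ref{THMexistencjal}, which needs the TAP (just proved) together with essential countability. The key observation is that determination forces essential countability: for a determined finite object $X$, every isomorphism out of $X$ is isomorphic as a transition to $\id X$, while all of $\Tau^+(X)$ is a single isomorphism class. Hence, putting $\Ef(X) = \{\id X\}$ when $\Tau^+(X) = \emptyset$ and $\Ef(X) = \{\id X, t_X\}$ for a fixed $t_X \in \Tau^+(X)$ otherwise, one obtains a finite --- hence countable --- set of representatives with the property that every $f \in \Tau(X)$ satisfies $h \cmp f \in \Ef(X)$ for a suitable isomorphism $h$. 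Thus $\Ee$ is essentially countable, and Theorem~\ref{THMexistencjal} delivers the (in fact unique) evolution with the absorption property.

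I do not expect a genuine obstacle here; the single point demanding care is the bookkeeping of the trivial transitions, where regularity is precisely the hypothesis ensuring that the amalgamating arrows $f', g'$ remain \emph{transitions} and not merely $\uv$-arrows. Once the trivial cases are dispatched via regularity and the nontrivial case via determination, the reduction to Theorem~\ref{THMexistencjal} is immediate.
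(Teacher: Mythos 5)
Your proposal is correct and follows essentially the same route as the paper: isomorphism cases handled by inverting and using regularity, the doubly nontrivial case by determination, and existence via essential countability plus Theorem~\ref{THMexistencjal}. You merely spell out two points the paper leaves implicit --- that regularity is what makes $g \cmp f^{-1}$ a transition, and the explicit two-element set $\Ef(X)$ witnessing essential countability --- which is a faithful elaboration rather than a different argument.
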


\begin{pf}
	Let us first show the TAP: Fix transitions $\map f Z X$, $\map g Z Y$, where $Z$ is a finite object. If one of $f$, $g$ is an isomorphism, we obtain an amalgamation by using its inverse and the other transition.
	If both $f$, $g$ are nontrivial, by determination there is an isomorphism $h$ with $g = h \cmp f$ and hence $h$ and the identity of the codomain of $g$ provide an amalgamation.
	
	Clearly, a determined evolution system is locally countable, therefore by Theorem~\ref{THMexistencjal} it has an evolution with the absorption property.
\end{pf}

We are now ready to state our variant of Newman's Lemma.

\begin{tw}\label{THMwiewoefqweDET}
	A regular locally confluent eventually determined evolution system is confluent.
\end{tw}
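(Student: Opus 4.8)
The statement is the evolution--system analogue of Newman's Lemma, so the plan is to run Huet's well-founded induction, with the role of termination played by eventual determination. The first task is to manufacture a well-founded rank on (isomorphism classes of) finite objects. Call a finite object \emph{stable} if every object reachable from it by a path is determined; these are exactly the objects from which no non-determined object is reachable, and every stable object is itself determined (it is reachable from itself). I would define the \emph{relevant successors} of a non-stable object $X$ to be its immediate nontrivial successors (codomains of nontrivial transitions out of $X$, up to isomorphism) when $X$ is non-determined, and the single forced successor $X'$ (the codomain of the unique nontrivial transition, up to isomorphism) when $X$ is determined; stable objects have none. Writing $X\rightsquigarrow S$ for ``$S$ is a relevant successor of $X$'', I set $\rho(X)=0$ for stable $X$ and $\rho(X)=1+\sup\{\rho(S):X\rightsquigarrow S\}$ otherwise.

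The crucial---and most delicate---point is that $\rightsquigarrow$ is well founded, so that $\rho$ is well defined; this is exactly where eventual determination enters and where the argument departs from the classical one. Ordinary forward reachability is \emph{not} well founded here, since a non-terminating system has infinite evolutions, so I cannot simply induct on ``distance travelled''. Instead I would argue that an infinite chain $X_0\rightsquigarrow X_1\rightsquigarrow\cdots$ is an evolution, whence by eventual determination all but finitely many $X_i$ are determined; and that a determined non-stable object reaches a non-determined object after finitely many forced steps (trace a path witnessing non-stability: while the current object is determined the next nontrivial transition is forced up to isomorphism, and determination is isomorphism-invariant by the preceding Claim, so the forced chain must hit the first non-determined object lying on that path). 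Combining these: beyond the point where all $X_i$ are determined, each $X_i$ still has a relevant successor, hence is non-stable, hence reaches a non-determined object after finitely many further steps, contradicting that all later $X_i$ are determined. Thus no infinite chain exists.

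With $\rho$ in hand I would prove ``confluence holds at $X$'' (any two paths out of $X$ amalgamate in $\eva{\Ee}$) by induction on $\rho(X)$. For the base case $\rho(X)=0$, with $X$ stable, I would build the forced evolution $X=X_0\to X_1\to\cdots$ and show, by induction on the length of a representation and using regularity together with determination at each $X_i$, that every path out of $X$ equals an isomorphism post-composed with an initial segment of this forced evolution; two such paths are then amalgamated at once by extending the shorter one along the forced evolution. For the inductive step I split on whether $X$ is determined. If $X$ is determined (and non-stable), regularity and determination let me factor every nontrivial path out of $X$ as $\bar p\cmp c$ through the forced transition $c\colon X\to X'$, so amalgamating two paths reduces to amalgamating their tails at $X'$, which is legitimate since $\rho(X')<\rho(X)$.

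If $X$ is non-determined, I would run the genuine diamond argument: after absorbing leading isomorphisms into the first nontrivial transition (regularity), write $p=\bar p\cmp t_1$ and $q=\bar q\cmp r_1$ with $t_1\colon X\to A$ and $r_1\colon X\to B$ nontrivial; apply \emph{local} confluence to $t_1,r_1$ to obtain paths into a common object $C$; then apply the induction hypothesis (full confluence) at $A$ and at $B$, both of strictly smaller rank, and chase the resulting arrows to produce paths $P,Q$ with $P\cmp p=Q\cmp q$. The one point demanding care throughout, and the reason the categorical version is ``more delicate'' than the classical statement, is that confluence here asks for an \emph{equality of compositions}, not merely a common reachable object, so at every stage I must track the actual arrows rather than just endpoints; in the trivial sub-cases (one of the two paths an isomorphism) this equality is arranged directly by post-composing with the appropriate isomorphism. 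The main obstacle is thus not the diamond chase itself but the extraction of the well-founded rank $\rho$ from the hypothesis of eventual determination, which is weaker than termination and allows the induction to pass harmlessly through arbitrarily long determined stretches before either branching or stabilising.
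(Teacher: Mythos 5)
Your proposal is correct and follows the same overall route as the paper's proof: Huet's well-founded induction, with well-foundedness extracted from eventual determination and with the genuine diamond step supplied by local confluence at a branching object. Where you differ is in how the well-founded structure is manufactured. The paper declares $X \rless Y$ only when $Y$ admits two non-isomorphic nontrivial transitions with distinct codomains, one of them landing in $X$; an infinite $\rless$-descending chain is then immediately (after prepending a path from the origin) an evolution with infinitely many non-determined terms, so well-foundedness is a one-line consequence of eventual determination, and the determined objects are exactly the $\rless$-minimal elements. Your rank $\rho$ also drops along the forced step out of a determined non-stable object, which is why you need the extra lemma that the forced chain out of such an object eventually meets a non-determined one; this makes your well-foundedness argument noticeably longer. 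In exchange, your induction handles determined objects by an honest recursive call at the forced successor $X'$, and your base case (stable objects) comes with a complete normal form for all paths, whereas the paper disposes of the $\rless$-minimal elements by amalgamating pairs of \emph{transitions} only and leaves the extension to pairs of paths implicit. Both arguments invoke regularity in the same places --- absorbing isomorphisms into the following transition and transporting determination along isomorphisms --- so the trade is purely one of where the bookkeeping effort is spent.
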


\begin{pf}
	We follow the scheme of Huet's proof of Newman's Lemma~\cite{Huet} using induction over well-founded relation. Namely, fix an evolution system $\Ee$ as in the theorem and define a quasi ordering on $\eva{\Ee}$ as follows. Let $P = \ob{\eva{\Ee}}$ and $X, Y \in P$. Declare $X \rless Y$ if there is non-trivial transition $\map t Y X$ and there exist $\tilde{X} \ne X$ and a non-trivial transition $\map{t_0} Y \tilde{X}$ such that $t$ and $t_0$ are not isomorphic. In particular it means that there are at least two non-trivial not isomorphic transitions going out of $Y$ and the codomain of one of them is precisely $X$.
	
	Note that $\Pel = \pair P \rless$ is well founded, because our system $\Ee$ is determined. 
	Note that $\rless$-minimal elements are precisely those finite objects from which there is only one non-trivial transition and they trivially admit amalgamations in $\eva{\Ee}$. Namely, let us denote by $M = \set{X \in \Pel \colon X \; \text{is} \rless \text{-minimal}}$. Let $X\in M$. Consider all possible cases
	\begin{enumerate}
		\item If both $f, g \in \Tau(X)$ are trivial, we can take their inverses to close the diagram.
		\item If $f\in \Tau(X)$ is trivial and $t\in \Tau^+(X)$, by local confluence take $f^{-1}$ and again the transition $t$
		$$\begin{tikzcd}
			X \ar[r, "t"] \ar[d, swap,  "f"] \nx Y \ar[dr, "\id Y"] \\
			\tilde{X} \ar[r, swap, "f^{-1}"] \nx X \ar[r, swap, "t"] \nx Y
		\end{tikzcd}$$
		\item Lastly, if both $t_1, t_2 \in\Tau(X)$ are non-trivial, they have to be isomorphic because $X$ is $\rless$-minimal. In particular $\cod(t_1) = \cod(t_2) = Y$, so to close the diagram it is sufficient to take $\id{Y}$.
	\end{enumerate} 
	
	Now fix an arbitrary object $Z\in \Pel \setminus M$ and two paths $\map f Z X$, $\map g Z Y$.
	Let us assume first that $g$ is a transition.
	
	If all the transitions composing $f$ are trivial then $f$ is an isomorphism, therefore it is a transition, so we amalgamate $f$, $g$ easily.
	Otherwise, $f = \tilde{f} \cmp f_0 \cmp h$, where $h$ is an isomorphism, $f_0$ is a nontrivial transition and $\tilde{f}$ is a path (possibly an identity). 
	$$\begin{tikzcd}
		Z \ar[r, "h"] \ar[d, swap,  "g"] \nx {} \ar[r, "f_0"] \nx {} \ar[r, "\tilde{f}", ppath=1.2] \nx X\\
		Y \nx \nx
	\end{tikzcd}$$
	By regularity of $\Ee$ we obtain non-trivial transiton $\tilde{f_0} = f_0 \cmp h$. Using the same argument and the second axiom of $\Tau$ we may assume, that the path $\tilde{f}$ consists of non-trivial transitions only. Note that both  $\map{\tilde{f_0}}{Z}{\tilde{Z}}$ and $g$ are non-trivial, so $\tilde{Z} \rless Z$. 
	By local confluence, there are paths $k, \ell$ such that $k \cmp \tilde{f_0} = \ell \cmp g$. 
	By inductive hypothesis (over the well-founded ordering $\rless$), there exist paths $f', k'$ such that $f' \cmp \tilde f = k' \cmp k$.
	$$\begin{tikzcd}
		Z \ar[r, "\tilde{f_0}"] \ar[d, swap,  "g"]  \nx \tilde{Z} \ar[d, "k", ppath=1.2] \ar[r, "\tilde{f}", ppath=1.12] \nx X\ar[d, "f'", ppath=1.2]\\
		Y \ar[r, "l"', ppath=1.02]\nx {} \ar[r, "k'"', ppath=1.6]\nx {} \nx {}
	\end{tikzcd}$$
	Finally, $f'$ and $k' \cmp \ell$ provide an amalgamation of $f$ and $g$.
	
	This actually completes the proof, as the general case where $g$ is a path, is settled by easy induction on its length.
\end{pf}

\subsection{Termination}

\begin{df}
	An evolution system is \define{terminating}{evolution system!-- terminating} if every evolution is eventually trivial, namely, from some point on all the transitions are isomorphisms.
	An object $X$ is \define{normalized}{normalized object} if every path from $X$ consists of isomorphisms, that is, all transitions from $X$, as well as from any object isomorphic to $X$, are trivial.
\end{df}

In other words, an evolution system is terminating if every path starting from the origin ends at a normalized object, although  such objects can exist on their own. The term ``normalized'' is inspired by the ``normal form'' in the theory of rewriting systems.

\begin{lm}
	Let $\Ee$ be a terminating confluent evolution system. Then there exists a unique (up to isomorphism) normalized object in $\Ee$.
\end{lm}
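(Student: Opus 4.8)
The plan is to prove existence by extracting a \emph{minimal} object with respect to the reduction relation, and uniqueness by a single amalgamation. Both halves mirror the classical argument for unique normal forms in terminating confluent rewriting systems, the only genuinely categorical twist being the treatment of isomorphisms.

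For existence I would work with the relation on finite objects defined by $X \succ Y$ iff there is a nontrivial path from $X$ to $Y$. Termination makes this relation well founded: an infinite descending chain $X_0 \succ X_1 \succ \cdots$ among objects reachable from the origin would, after prefixing a path from $\iza$ to $X_0$, concatenate into a single evolution containing infinitely many nontrivial transitions, contradicting the assumption that every evolution is eventually trivial. Hence, among the finite objects reachable from the origin there is a $\succ$-minimal one, call it $N$; minimality says that no nontrivial path leaves $N$, i.e.\ every path from $N$ has length $0$ and is therefore an isomorphism. This already yields that every transition from $N$ is trivial, and the isomorphism-invariant part of the definition comes for free: if $\map h N {N_1}$ is an isomorphism and $\map s {N_1} Z$ a transition, then $s \cmp h$ is a path from $N$, hence an isomorphism, forcing $s$ to be an isomorphism as well. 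Thus $N$ is normalized (and reachable from $\iza$).

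For uniqueness I would take two normalized finite objects $N$, $N'$ together with witnessing paths $\map p \iza N$ and $\map q \iza {N'}$ in $\eva{\Ee}$. Confluence means $\eva{\Ee}$ has the amalgamation property, so there are paths $\map {p'} N W$ and $\map {q'} {N'} W$ with $p' \cmp p = q' \cmp q$. Since $N$ is normalized, every path from $N$ is an isomorphism, so $p'$ is an isomorphism; likewise $q'$ is an isomorphism. Therefore $N \iso W \iso N'$, so $N$ and $N'$ are isomorphic.

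The main obstacle — really the only delicate point — is the interaction between normalization and isomorphisms in the existence step: the naive greedy strategy of following nontrivial transitions until one gets stuck only guarantees that transitions out of the stopping object are trivial, not that its isomorphic copies are equally inert; basing the argument on the well-founded relation ``there is a nontrivial \emph{path} to'' repairs this and keeps the proof in the spirit of Huet's well-founded induction used in Theorem~\ref{THMwiewoefqweDET}. I would also point out that ``normalized object in $\Ee$'' must be read as a finite object, i.e.\ one reachable from $\iza$ (the normal form of the origin): the definition explicitly permits normalized objects lying outside the reach of $\iza$, among which no uniqueness can hold.
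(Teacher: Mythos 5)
Your proof is correct and follows essentially the same route as the paper: the uniqueness half (amalgamate the two witnessing paths from $\iza$ and note that the resulting paths out of normalized objects consist of isomorphisms) is literally the paper's argument, and your existence half is the paper's argument in contrapositive form --- the paper assumes no normalized object exists and directly builds an evolution alternating isomorphisms $h_{E_i}$ with nontrivial transitions $t_i \in \Tau^+(\tilde E_i)$ to contradict termination, which is exactly the expansion of your infinite $\succ$-descending chain. Your closing caveat that the lemma must be read as a statement about \emph{finite} normalized objects also matches the paper's own remark that normalized objects ``can exist on their own'' outside the reach of $\iza$.
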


\begin{pf}
	\textbf{Existence}. Let $\Ee$ be confluent terminating evolution system. Suppose there is no normalized object. Then for every $X \in \ob{\eva{\Ee}}$ there exists an isomorphism $\map {h_X} X \tilde{X}$ such that $\Tau^+(\tilde{X}) \ne \emptyset$. This way we obtain an evolution
	$$\begin{tikzcd}
		\iza = E_0 \ar[r, "h_{E_0}", "\iso"'] \nx \tilde{E_0} \ar[r, "t_0"] \nx E_1 \ar[r, "h_{E_1}", "\iso"'] \nx \tilde{E_1}\ar[r, "t_1"] \nx \cdot 
	\end{tikzcd}$$
	where $t_i \in \Tau^+(\tilde{E_i})$ for $i \in \nat$, contradicting the termination of $\Ee$.
	
	\textbf{Uniqueness}. Suppose $U_0, U_1$ are normalized. There exists paths $\map f \iza U_0$ and $\map g \iza U_1$. System is confluent, so there exist further paths  $\map {f'} U_0 V$ and $\map {g'} U_1 V$ such that $f'\cmp f = g' \cmp g$. Note that both $f'$ and $g'$ consist of isomorphisms only, so $(g')^{-1} \cmp f' \colon U_0 \to U_1$ is an isomorphism between $U_0$ and $U_1$.
\end{pf}	

\pustka{\begin{df}[Iso-stability]
	An evolution system $\Ee$ is \define{iso-stable}{evolution system!-- iso-stable} if for every transition $\map f XY$, for every isomorphism $\map h X{\tilde X}$ there exist a transition $\map{t}{\tilde X}{\tilde Y}$ and a path $\map{f'}{Y}{\tilde Y}$ satisfying
	$f' \cmp f = t \cmp h$.
\end{df}

This property can possibly be called ``transferring transitions'' as it really says that every transition can be ``moved'' by an arbitrary isomorphism of its domain.
Let us note that typically the path $f'$ in the definition of iso-stability is an isomorphism, however in the proof of the next result we need just a path.

\begin{prop}
	A regular evolution system is \textcolor{red}{iso-stable}.
\end{prop}

\begin{pf}
	Fix a transition $\map f XY$ and and isomorphism $\map h X{\tilde X}$. By regularity, $t := f \cmp h^{-1}$ is a transition and we have $\id Y \cmp f = t \cmp h$.
\end{pf}
}

Note that a terminating evolution system is a special case of the eventually determined one. If it is also regular, then from Theorem~\ref{THMwiewoefqweDET} we obtain the following corollary.

\begin{wn}\label{THMwiewoefqwe}
	A locally confluent regular terminating evolution system is confluent.
\end{wn}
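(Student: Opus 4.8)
The plan is to derive the corollary from Theorem~\ref{THMwiewoefqweDET} by checking that everything that proof actually consumes is present under the weaker hypothesis of termination. Two of the three assumptions of the theorem---regularity and local confluence---are explicitly among the hypotheses of the corollary, so the only point that needs attention is the inductive engine of Theorem~\ref{THMwiewoefqweDET}. That engine is a well-founded induction over the relation $\rless$ on $P=\ob{\eva{\Ee}}$, where $X\rless Y$ means that $Y$ admits two non-isomorphic nontrivial transitions, one of which has codomain $X$. Crucially, eventual determination is used in that proof \emph{only} to guarantee that $\rless$ is well founded; the base case for $\rless$-minimal objects and the inductive shifting step invoke nothing beyond regularity, local confluence, and the inductive hypothesis. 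So it suffices to show that termination also forces $\rless$ to be well founded, and then the argument of Theorem~\ref{THMwiewoefqweDET} applies word for word. This is the precise sense in which a terminating system can play the role of an eventually determined one.

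First I would verify well-foundedness of $\rless$. Suppose towards a contradiction that there is an infinite descending chain $X_0\rmore X_1\rmore X_2\rmore\cdots$. By definition of $\rless$, each relation $X_{i+1}\rless X_i$ is witnessed by a nontrivial transition $\map{t_i}{X_i}{X_{i+1}}$, and these transitions are composable. Since every $X_i$ is a finite object, there is a path $\iza\to\cdots\to X_0$; concatenating it with $t_0,t_1,t_2,\dots$ produces an evolution
$$\begin{tikzcd} \iza \ar[r] & \cdots \ar[r] & X_0 \ar[r, "t_0"] & X_1 \ar[r, "t_1"] & X_2 \ar[r, "t_2"] & \cdots \end{tikzcd}$$
containing infinitely many nontrivial transitions. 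This contradicts termination, which forbids any evolution with infinitely many nontrivial steps. Hence no such chain exists and $\rless$ is well founded.

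With well-foundedness in hand I would then simply re-run the proof of Theorem~\ref{THMwiewoefqweDET}: the $\rless$-minimal objects amalgamate in $\eva{\Ee}$ by the case analysis using local confluence and regularity, and for a non-minimal $Z$ one decomposes the first path as $\tilde f\cmp f_0\cmp h$, replaces $f_0\cmp h$ by the nontrivial transition $\tilde{f_0}$ via regularity, notes $\tilde Z\rless Z$, closes the lower square by local confluence, and closes the upper one by the inductive hypothesis. The resulting amalgamation of arbitrary paths is exactly the statement that $\eva{\Ee}$ has the amalgamation property, i.e.\ that $\Ee$ is confluent.

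The main obstacle here is conceptual rather than computational. One is tempted to argue ``terminating $\Rightarrow$ eventually determined $\Rightarrow$ apply the theorem'', but this shortcut is stronger than what holds: an evolution may stall forever at a non-determined object through identity transitions, so termination alone does not make every evolution eventually run through determined objects. The genuine content that survives is precisely the well-foundedness of $\rless$, which termination supplies even more directly than eventual determination does. The only technical care required is to assemble the hypothetical descending $\rless$-chain into a bona fide evolution anchored at the origin and built from composable nontrivial transitions, as done above.
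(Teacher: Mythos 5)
Your proposal is correct, and it reaches the corollary by the same overall route as the paper---deducing it from Theorem~\ref{THMwiewoefqweDET}---but it repairs the one step where the paper is too quick. The paper's entire justification is the remark that a terminating evolution system is a special case of an eventually determined one; as you observe, that implication is false as literally stated, since an evolution may stall forever via identities at a finite object admitting two non-isomorphic nontrivial transitions, making the system terminating without being eventually determined. What you do instead is isolate the only thing eventual determination actually contributes to the proof of Theorem~\ref{THMwiewoefqweDET}, namely the well-foundedness of $\rless$ on $\ob{\eva{\Ee}}$, and check that termination supplies this directly: an infinite descending $\rless$-chain yields composable nontrivial transitions which, prefixed by a path from $\iza$ (available because the objects in question are finite), assemble into an evolution with infinitely many nontrivial steps, contradicting termination. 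The rest is then the theorem's well-founded induction verbatim, and your observation that the base case and the inductive step use only regularity, local confluence, and the inductive hypothesis is accurate. The net effect is the same corollary, but your version makes explicit the common lemma (well-foundedness of $\rless$) that both hypotheses feed into the induction, which is a cleaner and strictly more accurate justification than the paper's one-line reduction.
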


\pustka{
	\begin{pf}
	We follow the scheme of Huet's proof of Newman's Lemma~\cite{Huet}. Namely, fix an evolution system $\Ee$ as in the theorem and define a strict ordering on $\eva{\Ee}$ by declaring $X \rless Y$ if there is a path from $X$ to $Y$ and at least one of the transitions on this path is nontrivial.
	Since $\Ee$ is terminating, the ordering $\rmore$ is well founded on the class of all finite objects.
	This is indeed a strict ordering, as it is impossible to have both $X \rless Y$ and $Y \rless X$, which would lead to an evolution with infinitely many nontrivial transitions.
	Note that $\rmore$-minimal elements are precisely the normalized finite objects and they trivially admit amalgamations in $\eva{\Ee}$.

	Now fix an arbitrary finite object $Z$ and two paths $\map f Z X$, $\map g Z Y$.
	Let us assume first that $g$ is a transition.
	
	If all the transitions composing $f$ are trivial then $f$ is an isomorphism, therefore it is a transition, so we amalgamate $f$, $g$ easily.
	Otherwise, $f = \tilde{f} \cmp f_0 \cmp h$, where $h$ is an isomorphism, $f_0$ is a nontrivial transition and $\tilde{f}$ is a path (possibly an identity). Using the iso-stability of $\Ee$, we amalgamate $g$ and $h$ by a transition and a path. Thus, we may assume $h = \id Z$ and $f = \tilde{f} \cmp f_0$. Let $\tilde{Z} = \dom(\tilde{f})$.
	Then $Z \rless \tilde{Z}$.
	
	By local confluence, there are paths $k, \ell$ such that $k \cmp f_0 = \ell \cmp g$. By inductive hypothesis (over the well founded ordering $\rmore$), there exist paths $f', k'$ such that $f' \cmp \tilde f = k' \cmp k$.
	Finally, $f'$ and $k' \cmp \ell$ provide an amalgamation of $f$ and $g$.
	
	This actually completes the proof, as the general case where $g$ is a path, is settled by easy induction on its length.
\end{pf}
}
The following example shows that the regularity is necessary in Corollary~\ref{THMwiewoefqwe}.

\begin{ex}\label{EXpietSedma}
	Let $\uv$ be the category of sets, $\iza = \sn0$ and define $\Tau$ to be the class of all bijections plus two nontrivial transitions $t,s$, where $\map{t}{\sn0}{\dn01}$, $\map{s}{\sn1}{\{0,1,2\}}$ are defined by $t(0)=0$, $s(1)=1$.
	Then $\Ee = \seq{\uv, \Tau, \iza}$ is obviously a terminating evolution system.
	It is locally confluent, because each object admits at most one nontrivial transition. On the other hand, it is not confluent. Indeed, if $\map{h}{\sn0}{\sn1}$ is the (unique) bijection, then clearly the paths $t$ and $s \cmp h$ cannot be amalgamated, because the only transitions from $\dn01$ (as well as from $\{0,1,2\}$) are bijections. Clearly, $\Ee$ is not regular.
	$$\begin{tikzcd}
		\set{0} \ar[r, "t"] \ar[d, swap, "\cong"]& \set{0, 1} \ar[d, "\ncong"] \\
		\set{1} \ar[r, "s"'] & \set{0, 1, 2}
	\end{tikzcd}$$
\end{ex}

The next result does not require regularity.

\begin{tw}
	Assume $\Ee$ is a confluent terminating evolution system and $N$ is a normalized finite object. Then
	\begin{enumerate}[itemsep=0pt]
		\item[{\rm (1)}] $N$ is homogeneous.
		\item[{\rm (2)}] $N$ is cofinal in $\eva{\Ee}$.
		\item[{\rm (3)}] Every normalized finite object is isomorphic to $N$.		
	\end{enumerate}
\end{tw}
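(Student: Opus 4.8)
The plan is to reduce all three items to a single observation and then run essentially the same short diagram-chase three times. The observation is that, since $N$ is normalized, \emph{every path with domain $N$ is an isomorphism}: by the definition of normalized, a path out of $N$ is composed of trivial transitions only, and a composition of isomorphisms is an isomorphism. Together with confluence (amalgamation in $\eva{\Ee}$), this lets me close any square one of whose legs emanates from $N$ by simply inverting that leg. Note that (3) is already the uniqueness part of the preceding lemma, so I would only recall it: two normalized finite objects admit paths from $\iza$, amalgamating these gives two legs out of normalized objects, both isomorphisms, whose composite with an inverse exhibits the required isomorphism.

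For cofinality (2) I would argue as follows. Given a finite object $X$, choose a path $\map p \iza X$ (available because $X$ is finite) and a path $\map q \iza N$ (as $N$ is finite). By confluence there are paths $\map{p'}XW$ and $\map{q'}NW$ with $p' \cmp p = q' \cmp q$. Since $q'$ is a path out of the normalized object $N$, it is an isomorphism, and hence
\[
(q')^{-1}\cmp p'\colon X \to N
\]
is a path, witnessing that $N$ is cofinal in $\eva{\Ee}$.

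For homogeneity (1) I first fix the correct reading in the finite setting: it is the specialization of the earlier Homogeneity theorem in which the colimit is replaced by the finite object $N$ and trajectories into the colimit are replaced by paths into $N$. So let $X$ be a finite object and let $\map{i,j}XN$ be two paths. As $\dom(i)=\dom(j)=X$, confluence yields paths $\map{i'}NW$ and $\map{j'}NW$ with $i'\cmp i = j'\cmp j$. Both $i'$ and $j'$ are paths out of $N$, hence isomorphisms; setting $h := (j')^{-1}\cmp i'$ gives an automorphism of $N$, and
\[
h \cmp i = (j')^{-1}\cmp i'\cmp i = (j')^{-1}\cmp j'\cmp j = j,
\]
as required.

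Since each argument is a one-square amalgamation followed by an inversion, there is no serious computational obstacle; the only point demanding care is conceptual. I must make sure that ``$N$ is homogeneous'' is interpreted as the right finite analogue of the colimit version (paths into $N$ in the role of trajectories), and I must invoke the normalization hypothesis exactly where it is needed — it is the definitional fact that every path from $N$ consists of isomorphisms that turns each amalgamating leg out of $N$ into an invertible arrow. I would also remark that termination is not used directly in any of the three arguments; it enters only through the preceding lemma, which guarantees that a normalized object $N$ exists at all.
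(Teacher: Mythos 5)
Your proposal is correct and follows essentially the same route as the paper's own proof: in each of the three items you amalgamate two paths via confluence, observe that any leg emanating from a normalized object is an isomorphism, and invert it to produce the required arrow. Your added remarks — that termination enters only through the existence of $N$, and that homogeneity here means the finite analogue with paths into $N$ in place of trajectories — are accurate and consistent with the paper.
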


\begin{pf}
	We start with proving (3). Fix another normalized finite object $M$ and fix paths $\map f {\iza}N$, $\map g {\iza}M$. By confluence, we get paths $f'$, $g'$ with $f' \cmp f = g' \cmp g$. But $N$, $M$ are normalized, therefore $f'$, $g'$ are isomorphisms.
	Hence $(g')^{-1} \cmp f'$ is an isomorphism between $N$ and $M$.
	\pustka{$$\begin{tikzcd}
		\iza \ar[r, "f"] \ar[d, "g"']& N \ar[d, "f'"] \\
		M \ar[r, "g'"'] & M \cong N
	\end{tikzcd}$$}
	The proof of (1) is a somewhat similar: We replace $\iza$ by a finite object $A$, so we are given two paths $f$, $g$ from $A$ to $N$. The paths $f'$ and $g'$ obtained from confluence give rise to an automorphism $h = (g')^{-1} \cmp f'$ satisfying $h \cmp f = g$.
	
	Finally, (2) is proved as follows. We fix paths $\map f \iza A$, $\map g \iza N$ and, using confluence, we obtain paths $f'$, $g'$ as above. Now $g'$ must be an isomorphism, therefore $(g')^{-1} \cmp f'$ is a path from $A$ to $N$.
	\pustka{$$\begin{tikzcd}
		\iza \ar[r, "f"] \ar[d, "g"']& A \ar[d, "f'"] \\
		N \ar[r, "g'"'] & N
	\end{tikzcd}$$}
\end{pf}

The result above shows that confluent terminating evolution systems describe mathematical structures from model theory, namely, finite homogeneous structures:

\begin{ex}
	Let $N$ be a finite homogeneous first-order structure, for simplicity, let us assume the language consists of finitely many relations. Homogeneity means every partial isomorphism extends to an automorphism.
	Let $\uv$ be the category whose objects are all substructures of $N$ and arrows are homomorphisms (or just embeddings).
	Let the origin $\iza$ be the empty structure, which is an initial object of $\uv$.
	Let $\Tau$ consist of all isomorphisms and one-point extensions, as in Example~\ref{EXtrzijedna}.
	Since $N$ is homogeneous, the system $\Ee = \seq{\uv, \Tau, \iza}$ is confluent (it even has the TAP). It is terminating, as the cardinality of $N$ blocks ``long'' nontrivial paths.
	Finally, $N$ is the unique normalized object in $\Ee$.
\end{ex}

The example above looks quite trivial, however it may actually give an idea of proving the homogeneity of a given finite structure, by looking at the class of all its substructures and proving confluence.

\subsection{Termination vs. directedness}

When it comes to rewriting systems, often it is important to reach a unique normal form, which we have called a \emph{normalized} object. 

\begin{prop}
	Assume $\Ee$ is a terminating evolution system such that $\eva{\Ee}$ is directed. Then there exists a unique, up to isomorphism, normalized finite object.
\end{prop}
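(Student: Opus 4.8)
The plan is to split the statement into existence and uniqueness, observing that existence is handled exactly as in the existence half of the preceding lemma on terminating confluent systems—that argument never actually uses confluence, only termination—whereas directedness is precisely the hypothesis that substitutes for confluence in the uniqueness half.

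\textbf{Existence.} I would argue by contradiction, copying the earlier argument verbatim. Suppose no object of $\eva\Ee$ is normalized. Then for each finite object $X$ there is an isomorphism $\map{h_X}{X}{\tilde X}$ with $\Tau^+(\tilde X)\nnempty$, i.e.\ $\tilde X$ admits a nontrivial transition. Starting at $\iza$ and alternately inserting such isomorphisms and the ensuing nontrivial transitions produces an evolution containing infinitely many nontrivial transitions, contradicting termination. Hence some object of $\eva\Ee$ is normalized; since every object of $\eva\Ee$ is finite by definition, this is the desired normalized finite object. Note that this half does not use directedness at all.

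\textbf{Uniqueness.} Let $N_0$ and $N_1$ be normalized finite objects. This is the only place where I would invoke directedness of $\eva\Ee$: it yields a finite object $W$ together with paths $\map p {N_0} W$ and $\map q {N_1} W$. The crucial observation is that every path out of a normalized object is an isomorphism—by definition such a path is a composition of isomorphisms, hence itself an isomorphism. Therefore $p$ and $q$ are both isomorphisms, and $q^{-1} \cmp p \colon N_0 \to N_1$ is an isomorphism, so $N_0 \iso N_1$.

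I do not expect a genuine obstacle here; the proof is short. The one point worth emphasizing is \emph{why} bare directedness suffices in place of confluence. In the confluent case one obtains a commuting square $f' \cmp f = g' \cmp g$, but for this statement no commutativity is needed: once $W$ is reached by paths emanating from normalized objects, those paths are forced to be isomorphisms, and any two isomorphisms sharing a codomain compose to an isomorphism between their domains. Thus only the ``common upper bound'' part of directedness is used—the coequalizing part of a filteredness condition, if that is the intended reading of ``directed,'' plays no role. The only care needed is to confirm, exactly as in the earlier lemma, that existence depends solely on termination and is independent of any amalgamation-type assumption.
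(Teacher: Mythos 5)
Your proposal is correct and follows essentially the same route as the paper: the paper's own proof of this proposition consists only of the uniqueness argument you give (directedness yields paths from $N_0$, $N_1$ to a common finite object, which normalization forces to be isomorphisms), while the existence half is exactly the termination-only contradiction argument from the preceding lemma on confluent terminating systems. Your observation that that existence argument never uses confluence is accurate and matches the paper's implicit reliance on it here.
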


\begin{pf}
	Assume $N_0$, $N_1$ are normalized finite objects. Using directedness, there are paths $\map {f_0}{N_0}M$, $\map {f_1}{N_1}M$ into some finite object $M$. Since $N_0$, $N_1$ are normalized, $f_0$, $f_1$ are isomorphisms, therefore $f_1^{-1} \cmp f_0$ witness that $N_0 \iso N_1$.
\end{pf}

It turns out that another variant of Newman's Lemma is true, where local confluence is replaced by local directedness.

\begin{df}
	Fix an evolution system $\Ee$. Two finite objects $A$, $B$ will be called \define{siblings}{siblings} if there are a finite object $X$ and transitions $\map f X A$, $\map g X B$.
	
	The system $\Ee$ is \define{locally directed}{evolution system!-- locally directed} if every two siblings $A$, $B$ admit paths to a common object.
	Finally, we say that $\Ee$ is \define{directed}{evolution system!-- directed} if $\eva{\Ee}$ is directed, namely, every two finite objects admit paths to a common object.
\end{df}

Below is the announced variant of Newman's Lemma that is perhaps closer to the classical formulation.

\begin{tw}
	A regular terminating locally directed evolution system is directed.
\end{tw}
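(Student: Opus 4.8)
The plan is to mirror the well-founded induction of Huet used in the proof of Theorem~\ref{THMwiewoefqweDET}, now recording only the \emph{existence} of a common object rather than a genuine amalgamation. On finite objects I define $W \rless Z$ to mean that there is a nontrivial path from $Z$ to $W$. This relation is well founded: an infinite descending chain $Z_0 \rmore Z_1 \rmore \cdots$ would yield nontrivial transitions $Z_0 \to Z_1 \to \cdots$, and since $Z_0$ is finite I can prepend a path $\iza \to Z_0$ to obtain a genuine evolution with infinitely many nontrivial transitions, contradicting termination. I then reduce the theorem to the following confluence-style claim $(\ast)$: for every finite object $Z$ and all paths $\map f Z A$, $\map g Z B$, the objects $A$ and $B$ admit paths to a common object. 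Because every finite object is reachable from $\iza$, instantiating $(\ast)$ at $Z = \iza$ immediately gives directedness of $\eva{\Ee}$.

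Next I would prove $(\ast)$ by induction along $\rless$. If either $f$ or $g$ is an isomorphism, then (isomorphisms being paths) one of $A, B$ reaches the other and we are done; this in particular settles the $\rless$-minimal objects, from which every path is an isomorphism. Otherwise both $f$ and $g$ are nontrivial, and using regularity I peel off any leading isomorphism to write $f = \tilde f \cmp f_0$ and $g = \tilde g \cmp g_0$, where $\map{f_0} Z {Z_1}$ and $\map{g_0} Z {W_1}$ are nontrivial transitions and $\tilde f, \tilde g$ are paths. Here regularity is used exactly as in Theorem~\ref{THMwiewoefqweDET}: if $h$ is a leading isomorphism, then $f_0 \cmp h$ is again a nontrivial transition, so that $f_0$ may be taken to issue from $Z$ itself.

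Now $Z_1$ and $W_1$ are siblings via $f_0$ and $g_0$, so local directedness provides a common object $C$ with paths $\map p {Z_1} C$ and $\map q {W_1} C$. Since $Z_1 \rless Z$ and $W_1 \rless Z$, I apply the inductive hypothesis twice. First, at $Z_1$ to the paths $\tilde f$ and $p$, obtaining a common object $D$ with paths $\map a A D$ and $\map c C D$. Second, at $W_1$ to the paths $\tilde g$ and $c \cmp q$, obtaining a common object $E$ with paths $\map b B E$ and $\map e D E$. Then $e \cmp a \colon A \to E$ and $b \colon B \to E$ exhibit $E$ as a common object reachable from both $A$ and $B$, closing the induction.

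The step I expect to be most delicate is establishing well-foundedness of $\rless$: termination is only postulated for evolutions issuing from the origin, so the argument must use that every finite object is reachable from $\iza$ in order to turn a hypothetical infinite descending chain into a genuine evolution. The remaining care is purely bookkeeping---ensuring that both inductive calls are made at objects strictly below $Z$---and the one genuinely essential hypothesis is regularity, which (as Example~\ref{EXpietSedma} shows for the confluent variant) cannot be dropped.
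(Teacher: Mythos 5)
Your proof is correct and follows exactly the route the paper intends: its own proof of this theorem is the one-line instruction to repeat the Huet-style well-founded induction of Theorem~\ref{THMwiewoefqweDET} while discarding commutativity, and your argument is precisely that adaptation, with the well-founded relation (existence of a nontrivial path, well-founded by termination after prepending a path from $\iza$) and the use of regularity to absorb leading isomorphisms both matching the paper's scheme. You supply more detail than the paper does --- in particular the symmetric peeling of both $f$ and $g$ followed by two applications of the inductive hypothesis --- but this is the same argument, not a different one.
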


\begin{pf}
	Repeat the proof of Theorem~\ref{THMwiewoefqwe} without taking care of commutativity of the diagrams.
\end{pf}

Again, Example~\ref{EXpietSedma} above shows that regularity is necessary. Actually, a weaker variant would be sufficient, namely, the following commutativity of transitions and isomorphisms: Given a transition $f$ and an isomorphism $h$ with $\dom(f)=\dom(h)$, there should exist a transition $t$ and a path $p$ such that $\cod(t)=\cod(p)$, $\dom(t) = \cod(h)$ and $\dom(p)=\cod(f)$.

\pustka{\textcolor{red}{maybe this should be in the graph rewriting section:} Directed terminating evolution systems may be used to model the complexity of rewriting, adding ``costs'' to the transitions, assuming that isomorphisms have zero cost. A concrete example is the classical problem of minimizing the cost of computing a given product of rectangular matrices, where a transition is multiplying two adjacent matrices.
After all, every abstract or concrete rewriting system $(X,\to)$ can be transferred to an evolution system by taking the category of all paths (the free category over the simple directed graph defined by $\to$).}

\section{Graph rewriting}\label{graphrewriting}

If a state of computation can be represented as a graph, further steps can be depicted as transformation rules applied to that graph. In general graph rewriting is a widely recognised tool used to algorithmically reduce such graphs, in order to obtain as simple result as possible.

We concentrate on the historically first introduced in~\cite{first} and well-studied in literature (see for instance~\cite{BH07, BCM+99, LS05}) double-pushout (DPO) approach to graph transformation to model our processes. It is called 'DPO' because the application of a reduction to a given graph is defined via two pushout diagrams within the category of graphs and total graph morphisms: first pushout is intended to model the removal of the left-hand side of the reduction from the graph to be rewritten, and the second one the addition of the right-hand side.

More specifically, let $L, K, R$ be finite graphs such that $K \hookrightarrow L$ and $K \hookrightarrow R$. In a rule $r=\langle L, K, R\rangle $ the graph $L$ describes preconditions of a rule (pattern graph), $R$ describes postconditions (replacement graph) and $K$ contains all vertices and edges which would remain the same. To apply the rule $r$ on given graph $G$ means to find a match $L$ in $G$ and replace this part by graph $R$ obtaining a graph $H$.

The first step is to create a context graph $D=(G\setminus m(L))\cup m(K)$ using (usually\footnote{Following~\cite{HMP00} injective matching makes the approach more expressive, although some authors~\cite{Corr97} use arbitrary, possibly non-injective matching morphisms}) an injective morphism $m\colon L\to G$, which has to satisfy the \emph{dangling condition}: no edge in $G\setminus m(L)$ is incident to a vertex in $m(L\setminus K)$. 

The diagram below illustrates a double pushout (DPO)
\[\begin{tikzcd}
	L & K & R \\
	G & D & H
	\ar[hook', from=1-2, to=1-1]
	\ar[from=1-2, to=2-2]
	\ar["m"', from=1-1, to=2-1]
	\ar[hook', from=2-2, to=2-1]
	\ar[hook, from=2-2, to=2-3]
	\ar[hook, from=1-2, to=1-3]
	\ar[from=1-3, to=2-3]
\end{tikzcd}\]

In case of evolution systems, the double pushout approach can be used not only to reduce a graph but also to expand it.  Rules used to extend such a graph can vary. For instance we can start from one vertex (or even from an empty graph) and at each step add a vertex and connect it to some of already existing vertices, or even to all of them obtaining a complete graph at each step. Additionally, the rules themselves can be considerably more intricate. Let us see an example.

Let $A$ be a finite simple directed graph of the following form
\[\begin{tikzcd}
	1 & 2 & 3 \\
	& 4
	\ar[from=1-1, to=1-2]
	\ar[from=1-2, to=1-3]
	\ar[from=1-2, to=2-2]
	\ar[from=1-3, to=2-2]
\end{tikzcd}\]
Such a graph can be extended by applying the rule $r$
\begin{equation}\label{e}
	\big\{ (x_1, x_2), (x_1, x_3) \big\} \xrightarrow[\text{}]{\text{r}} \big\{ (x_1,x_3), (x_1, a), (x_2, a), (x_3, a) \big\},\tag{$*$}
\end{equation}
to visualize using DPO notation
\[\begin{tikzcd}
	x_1 & x_2 && x_1 & x_2 && x_1 & a & x_2 \\
	x_3 & \textcolor{blue}{\textbf{R}} && x_3 & \textcolor{blue}{\textbf{K}} && x_3 & \textcolor{blue}{\textbf{L}}
	\ar[from=1-7, to=1-8]
	\ar[from=1-7, to=2-7]
	\ar[from=1-9, to=1-8]
	\ar[from=1-1, to=2-1]
	\ar[from=1-1, to=1-2]
	\ar[from=1-4, to=2-4]
	\ar[color={blue}, bend left, hook', from=2-5, to=2-2]
	\ar[color={blue}, hook, bend right, from=2-5, to=2-8]
	\ar[from=2-7, to=1-8]
\end{tikzcd}\]
In our graph $A$, the subset $R=\big\{ (2, 3), (2, 4) \big\}$ matches the precondition above, so by applying the rule we obtain a graph
\[\begin{tikzcd}
	1 & 2 & 5 & 3 \\
	& 4
	\ar[from=1-1, to=1-2]
	\ar[from=1-2, to=1-3]
	\ar[from=1-4, to=1-3]
	\ar[from=1-2, to=2-2]
	\ar[from=1-4, to=2-2]
	\ar[from=2-2, to=1-3]
\end{tikzcd}\]
After one application, the $R$ still follows the rule, but now subset $\big\{ (3, 6), (3, 4) \big\}$ follows the rule as well, so we can make another two extensions
\[\begin{tikzcd}
	&& 7 \\
	1 & 2 && 5 & 6 & 3 \\
	&& 4
	\ar[from=2-1, to=2-2]
	\ar[from=2-2, to=1-3]
	\ar[from=2-2, to=3-3]
	\ar[from=3-3, to=1-3]
	\ar[from=2-4, to=1-3]
	\ar[from=3-3, to=2-4]
	\ar[from=2-4, to=2-5]
	\ar[from=3-3, to=2-5]
	\ar[from=2-6, to=2-5]
	\ar[shift left=1, from=2-6, to=3-3]
\end{tikzcd}\]
It is evident that at each step, there is the potential to make additional transitions from vertices that have already been considered. Moreover, it can happen that there appear other vertices that can also be taken into consideration. Consequently, we can apply this rule an infinite number of times, resulting in progressively larger and more complicated, yet still finite, graphs.

The following diagram illustrates a DPO using the rule (\ref{e}) of a fixed graph $G$
\pustka{\[\begin{tikzcd}
		{x_1} && {x_2} \\
		{x_3} & {x_4} \\
		{x_5} & \textcolor{blue}{\textbf{G}}
		\ar[from=1-1, to=1-3]
		\ar[from=1-1, to=2-1]
		\ar[from=2-1, to=2-2]
		\ar[from=1-3, to=2-2]
		\ar[from=2-1, to=3-1]
	\end{tikzcd}\]}
\[\begin{tikzcd}
	{x_1} && {x_2} & {x_1} && {x_2} & {x_1} & {x_4} & {x_2} \\
	{x_3} & \textcolor{blue}{\textbf{L}} && {x_3} & \textcolor{blue}{\textbf{K}} && {x_3} & \textcolor{blue}{\textbf{R}} \\
	{x_1} && {x_2} & {x_1} && {x_2} & {x_1} & {x_3} & {x_2} \\
	{x_3} & {x_4} && {x_3} & {x_4} && {x_6} & {x_4} \\
	{x_5} & \textcolor{blue}{\textbf{G}} && {x_5} & \textcolor{blue}{\textbf{D}} && {x_5} & \textcolor{blue}{\textbf{H}}
	\ar[from=1-1, to=1-3]
	\ar[from=1-1, to=2-1]
	\ar[from=1-4, to=2-4]
	\ar[from=1-7, to=2-7]
	\ar[from=1-7, to=1-8]
	\ar[from=1-9, to=1-8]
	\ar[from=2-7, to=1-8]
	\ar[from=3-1, to=3-3]
	\ar[from=3-3, to=4-2]
	\ar[from=4-1, to=4-2]
	\ar[from=3-1, to=4-1]
	\ar[from=4-1, to=5-1]
	\ar[from=3-4, to=4-4]
	\ar[from=4-4, to=5-4]
	\ar[from=4-4, to=4-5]
	\ar[from=3-6, to=4-5]
	\ar[from=3-7, to=3-8]
	\ar[from=3-7, to=4-7]
	\ar[from=4-7, to=4-8]
	\ar[from=4-7, to=3-8]
	\ar[from=3-9, to=3-8]
	\ar[from=3-9, to=4-8]
	\ar[from=4-7, to=5-7]
\end{tikzcd}\]

\paragraph{Translation to evolution systems}

How does it apply to evolution systems? Let $\uv$ be the category of simple directed graphs, transitions will be defined as in (\ref{e}) for one state of the graph at a time. 
\[\begin{tikzcd}
	\theta & {A_1} & {A_2} & \ldots
	\ar["{e_0}", from=1-1, to=1-2]
	\ar["{e_1}", from=1-2, to=1-3]
	\ar["{e_2}", from=1-3, to=1-4]
\end{tikzcd}\]
We shall fix the origin $\theta = A_0 = \big\{ (x_1, x_2), (x_1, x_3)\big\}$.
The first transition $e_0$ leads 
\[\begin{tikzcd}
	{\text{from} \; \theta } & {x_1} & {x_2} & {\text{to} \; A_1} & {x_1} & {x_4} & {x_2} \\
	& {x_3} &&& {x_3}
	\ar[from=1-2, to=1-3]
	\ar[from=1-2, to=2-2]
	\ar[from=1-5, to=2-5]
	\ar[from=1-5, to=1-6]
	\ar[from=1-7, to=1-6]
	\ar[from=2-5, to=1-6]
\end{tikzcd}\]
considering $x_1$, then the second transition $e_1$ leads from $A_1$ to $A_2$ again considering the same vertex 
\[\begin{tikzcd}
	{x_1} & {x_5} & {x_4} & {x_2} \\
	{x_3}
	\ar[from=1-1, to=1-2]
	\ar[from=1-1, to=2-1]
	\ar[from=1-4, to=1-3]
	\ar[from=1-3, to=1-2]
	\ar[from=2-1, to=1-2]
	\ar[from=2-1, to=1-3]
\end{tikzcd}\]
Each $A_n$ is an object of $\uv$.
Now we have two possible vertices to 'fix the transition' with: $x_1$ as well as a new possibility $x_3$. As mentioned above, we can proceed infinitely many times. 

This evolution system has indeed the  amalgamation property. If two transitions $f$ and $g$ are given, both of which choose one out of $n$ possible vertices $x_n$, say $f$ expands $x_k$ and $g$ expands $x_l$, then there exist further transitions $f'$ choosing $x_l$ and $g'$ choosing $x_k$ to expand. Both compositions of transitions in fact lead to the same graph (up to an isomorphism - relabelling the vertices).  

\pustka{Consider a new notation: let $f_{x_n}$ be unique transition, which extends a vertex $x_n$ of some fixed graph $A_k$. This way we have finitely many $f_{x_i}$, and transition $\map {f_k^{k+1}} {A_k} A_{k+1}$ would accumulate all the possible $f_{x_i}$ which can be 'done' simultaneously.  This way the system is determined as there is uniquely one transition from $A_k$ to $A_{k+1}$ for every $k<\omega$, so the transition amalgamation property is trivial. Therefore it is not as interesting approach as the original one. Moreover, our goal is to define transitions to be as simple as possible, so we would rather stick to the approach of expanding the graph one vertex at the time.}

\paragraph{Other rules}
The rule mentioned above is just an example of how we can extend given graph. We can define another extension rule: 
$$
\big\{ (x_1, x_2), (x_2, x_3) \big\} \xrightarrow[\text{}]{\text{r}} \big\{ (x_1, x_2), (x_2, x_3), (x_2, x_4), (x_3, x_4) \big\}
$$
\[\begin{tikzcd}
	{x_1} & {x_2} & {x_3} && {x_1} & {x_2} & {x_3} \\
	&&&&& {x_4}
	\ar[from=1-2, to=1-3]
	\ar[from=1-1, to=1-2]
	\ar[from=1-5, to=1-6]
	\ar[from=1-6, to=1-7]
	\ar[from=1-6, to=2-6]
	\ar[from=1-7, to=2-6]
	\ar["r", shorten <=19pt, shorten >=19pt, from=1-3, to=1-5]
\end{tikzcd}\]
In this case existing edges remain the same and two more are added. Therefore at each step we have more states to consider, just as in our previous example.

In the next two examples, rewriting rules get a bit more complicated. Let us show them onlyusing diagrams:	
\pustka{$$
\big\{ (x_1, x_2), (x_1, x_3) \big\} \xrightarrow[\text{}]{\text{r}} \big\{ (x_4, x_1), (x_2, x_4), (x_2, x_3), (x_3, x_4) \big\}
$$}
\[\begin{tikzcd}
	{x_1} & {x_2} && {x_1} & {x_4} & {x_2} \\
	{x_3} &&&& {x_3}
	\ar[from=1-1, to=1-2]
	\ar[from=1-6, to=1-5]
	\ar[from=2-5, to=1-5]
	\ar[from=1-6, to=2-5]
	\ar[from=1-1, to=2-1]
	\ar[from=1-5, to=1-4]
	\ar["r_1", shorten <=19pt, shorten >=19pt, from=1-2, to=1-4]
\end{tikzcd}\]

\pustka{$$
\big\{ (x_1, x_2), (x_3, x_2) \big\} \xrightarrow[\text{}]{\text{r}} \big\{ (x_1, x_3), (x_4, x_1), (x_4, x_2), (x_4, x_3) \big\}
$$}
\[\begin{tikzcd}
	{x_1} & {x_2} && {x_1} & {x_4} & {x_2} \\
	{x_3} &&& {x_3}
	\ar[from=1-1, to=1-2]
	\ar[from=1-5, to=1-6]
	\ar[from=1-5, to=2-4]
	\ar[from=1-5, to=1-4]
	\ar["r_2", shorten <=19pt, shorten >=19pt, from=1-2, to=1-4]
	\ar[from=2-1, to=1-2]
	\ar[from=1-4, to=2-4]
\end{tikzcd}\]
Nevertheless at each step there exists pattern graph which can be extended.

It is possible to define an evolution system where transitions are constructed based on any of the extension rules mentioned earlier. The only condition is that the origin $\theta$ contains at least one appropriate subset of edges. As long as our category consists of simple directed graphs, every evolution system has the amalgamation property. 

\paragraph{Multirule approach}

It is possible to define an evolution system with amalgamation, wherein the set of transitions is defined by a finite number of rules. The amalgamation property is preserved up to relabelling the vertices as long as the rules are not contradictory. Here is an example; let $A = \big\{ (x_1, x_2), (x_2, x_3), (x_4, x_2)\big\}$ and define two distinct subsets of transitions $T_1, T_2$  
$$
f\in T_1 \iff \big\{(x_1, x_2), (x_4, x_2) \big\} \xmapsto{f} \big\{(x_1, x_4), (x_5, x_1), (x_5, x_2), (x_5, x_4) \big\}
$$
$$
g\in T_2 \iff \big\{(x_1, x_2), (x_2, x_3) \big\} \xmapsto{g} \big\{(x_1, x_2), (x_2, x_3), (x_2, x_5), (x_3, x_5) \big\}
$$
The diagram below represents a graph $A$ with transitions $f$ and $g$
\[\begin{tikzcd}
	{x_1} & {x_2} & {x_3} && {x_1} & {x_5} & {x_2} & {x_3} \\
	{x_4} &&&& {x_4} \\
	{x_1} & {x_2} & {x_3} && {x_1} & {x_5} & {x_2} & {x_3} \\
	{x_4} & {x_5} &&& {x_4} && {x_6}
	\ar[from=2-1, to=1-2]
	\ar[from=1-1, to=1-2]
	\ar[from=1-2, to=1-3]
	\ar[from=1-5, to=2-5]
	\ar[from=1-6, to=2-5]
	\ar[from=1-6, to=1-5]
	\ar[from=1-6, to=1-7]
	\ar[from=2-5, to=1-7]
	\ar[from=3-1, to=3-2]
	\ar[from=4-1, to=3-2]
	\ar[from=3-2, to=3-3]
	\ar[from=3-2, to=4-2]
	\ar[from=3-3, to=4-2]
	\ar[from=3-6, to=4-5]
	\ar[from=3-5, to=4-5]
	\ar[from=3-6, to=3-5]
	\ar[from=3-6, to=3-7]
	\ar[from=4-5, to=3-7]
	\ar[from=1-7, to=1-8]
	\ar[from=3-7, to=4-7]
	\ar[from=3-7, to=3-8]
	\ar[from=3-8, to=4-7]
	\ar["{f'}", shorten <=10pt, shorten >=6pt, dashed, from=1-7, to=3-7]
	\ar["g", shorten <=10pt, shorten >=6pt, dashed, from=1-2, to=3-2]
	\ar["f", shorten <=14pt, shorten >=14pt, dashed, from=1-3, to=1-5]
	\ar["{g'}", shorten <=14pt, shorten >=14pt, dashed, from=3-3, to=3-5]
\end{tikzcd}\]
Note that for every $f\in T_1, g\in T_2$ it is possible to find $f' \in T_2, g'\in T_1$ such that  $f' \circ f = g' \circ g$, meaning that the composition leads to the same result.

\paragraph{Evolution systems vs. graph rewriting}

Graph rewriting primarily focuses on the simplification of graphs through the reduction of vertices and edges. The main goal is achieving termination, focusing on the final result rather than the specific way of reaching it. On the contrary, evolution systems are concerned with the expansion of structures, gradually moving towards infinity, preferably one step at a time. In this approach paths are far more relevant than the outcome. 

Terminating evolution systems may be used to model the complexity of rewriting, adding “costs” to the transitions, with isomorphisms assumed to have zero cost. A concrete example is the classical problem of minimizing the cost of
computing the product of given rectangular matrices. Besides the fact that several computational problems in linear algebra can be reduced to the computation of the product of two matrices, the complexity of matrix multiplication also arises as a bottleneck in a multitude of other computational tasks  In this case, a transition corresponds to multiplying two consecutive (or not necessarily consecutive) matrices.

Consider the multiplication $A_1\cdot A_2 \cdot \ldots \cdot A_n$ of $n$ rectangular matrices. It represents a terminating and confluent evolution system; however, the transition amalgamation property does not hold (for example given $t_k$ mutliplying $A_k\times A_{k+1}$ and $t_{k+3}$ multiplying $A_{k+3} \times A_{k+4}$, it is impossible to close the diagram with transitions). The trivial approach is to multiply fist $A_1\cdot A_2=B$ then $B\cdot A_3$ and so on. Conversely, another approach is to prioritize the multiplication of adjacent matrices of sizes $1\times n$ and $n\times 1$, resulting in a constant. This alternative strategy give us lower costs of multiplication process compared to the previous method. The standard method
for multiplying two $n \times n$ matrices uses $O(n^3)$ arithmetic operations. Additionally, a divide and conquer algorithm can be utilized to achieve an even faster process or other methods, see~\cite{matrix}. Multiple approaches can be employed to solve the problem, yielding the same outcome.

\pustka{

\section{Graph rewriting}\label{graphrewriting}

If a state of computation can be represented as a graph, further steps can be represented as a transformation rules of such a graph. In general graph rewriting is a tool used to reduce such graph algorithmically to obtain as simple result as possible. Graph rewriting system usually consists of a set of graph transformation rules of the following form.   

Let $L, K, R$ be finite directed graphs such that $K \hookrightarrow L$ and $K \hookrightarrow R$. In a rule $r=\langle L, K, R\rangle $ the graph $L$ describes preconditions of a rule (pattern graph), $R$ describes postconditions (replacement graph) and $K$ contains all vertices and edges which would remain the same. To apply the rule $r$ on given graph $G$ means to find a match $L$ in $G$ and replace this part by graph $R$ obtaining a graph $H$.

First  step is to create a context graph $D=(G\setminus m(L))\cup m(K)$ using an injective morphism $m\colon L\to G$, which has to satisfy the \emph{dangling condition}: no edge in $G\setminus m(L)$ is incident to a vertex in $m(L\setminus K)$. 

 The diagram below illustrates a double pushout (DPO)
\[\begin{tikzcd}
	L & K & R \\
	G & D & H
	\ar[hook', from=1-2, to=1-1]
	\ar[from=1-2, to=2-2]
	\ar["m"', from=1-1, to=2-1]
	\ar[hook', from=2-2, to=2-1]
	\ar[hook, from=2-2, to=2-3]
	\ar[hook, from=1-2, to=1-3]
	\ar[from=1-3, to=2-3]
\end{tikzcd}\]
In our case the double pushout approach can be used not only to reduce, but to expand a graph as well. Rules used to extend such a graph can be different, i.e. we can start from one vertex, or even from an empty graph and at each step add a vertex and connect it to some of already existing vertices, or even to all of them obtaining a complete graph at each step. But the rules can also be much more complicated. 

Let $A$ be a finite simple directed graph of the following form
\[\begin{tikzcd}
	1 & 2 & 3 \\
	& 4
	\ar[from=1-1, to=1-2]
	\ar[from=1-2, to=1-3]
	\ar[from=1-2, to=2-2]
	\ar[from=1-3, to=2-2]
\end{tikzcd}\]
Such a graph can be extended by applying the rule $r$
\begin{equation}\label{e}
	\big\{ (x, z), (x, y) \big\} \xrightarrow[\text{}]{\text{r}} \big\{ (x,z), (x, w), (y,w), (z, w) \big\},\tag{$*$}
\end{equation}
to visualize using DPO notation
\[\begin{tikzcd}
	x & y && x & y && x & w & y \\
	z & \textcolor{blue}{\textbf{R}} && z & \textcolor{blue}{\textbf{K}} && z & \textcolor{blue}{\textbf{L}}
	\ar[from=1-7, to=1-8]
	\ar[from=1-7, to=2-7]
	\ar[from=1-9, to=1-8]
	\ar[from=1-1, to=2-1]
	\ar[from=1-1, to=1-2]
	\ar[from=1-4, to=2-4]
	\ar[color={blue}, bend left, hook', from=2-5, to=2-2]
	\ar[color={blue}, hook, bend right, from=2-5, to=2-8]
	\ar[from=2-7, to=1-8]
\end{tikzcd}\]
In our graph $A$, the vertex $2$ mathes the precondition above, so by applying the rule we obtain a graph
\[\begin{tikzcd}
	1 & 2 & 5 & 3 \\
	& 4
	\ar[from=1-1, to=1-2]
	\ar[from=1-2, to=1-3]
	\ar[from=1-4, to=1-3]
	\ar[from=1-2, to=2-2]
	\ar[from=1-4, to=2-2]
	\ar[from=2-2, to=1-3]
\end{tikzcd}\]
After one application, the $2$ still follows the rule, but now element $3$ follows the rule as well, so we can make another two extensions
\[\begin{tikzcd}
	&& 7 \\
	1 & 2 && 5 & 6 & 3 \\
	&& 4
	\ar[from=2-1, to=2-2]
	\ar[from=2-2, to=1-3]
	\ar[from=2-2, to=3-3]
	\ar[from=3-3, to=1-3]
	\ar[from=2-4, to=1-3]
	\ar[from=3-3, to=2-4]
	\ar[from=2-4, to=2-5]
	\ar[from=3-3, to=2-5]
	\ar[from=2-6, to=2-5]
	\ar[shift left=1, from=2-6, to=3-3]
\end{tikzcd}\]
It is rather obvious that at each step it is possible to make further transitions from vertices that we have already considered, but we also have other vertices that can be take into consideration. In conclusion we can apply this rule infinitely many times, at each step obtaining bigger and more complex, yet still finite graphs.

The following diagram illustrates a DPO using the rule (\ref{e}) of a fixed graph $G$
\pustka{\[\begin{tikzcd}
	{x_1} && {x_2} \\
	{x_3} & {x_4} \\
	{x_5} & \textcolor{blue}{\textbf{G}}
	\ar[from=1-1, to=1-3]
	\ar[from=1-1, to=2-1]
	\ar[from=2-1, to=2-2]
	\ar[from=1-3, to=2-2]
	\ar[from=2-1, to=3-1]
\end{tikzcd}\]}
\[\begin{tikzcd}
	{x_1} && {x_2} & {x_1} && {x_2} & {x_1} & {x_4} & {x_2} \\
	{x_3} & \textcolor{blue}{\textbf{L}} && {x_3} & \textcolor{blue}{\textbf{K}} && {x_3} & \textcolor{blue}{\textbf{R}} \\
	{x_1} && {x_2} & {x_1} && {x_2} & {x_1} & {x_3} & {x_2} \\
	{x_3} & {x_4} && {x_3} & {x_4} && {x_6} & {x_4} \\
	{x_5} & \textcolor{blue}{\textbf{G}} && {x_5} & \textcolor{blue}{\textbf{D}} && {x_5} & \textcolor{blue}{\textbf{H}}
	\ar[from=1-1, to=1-3]
	\ar[from=1-1, to=2-1]
	\ar[from=1-4, to=2-4]
	\ar[from=1-7, to=2-7]
	\ar[from=1-7, to=1-8]
	\ar[from=1-9, to=1-8]
	\ar[from=2-7, to=1-8]
	\ar[from=3-1, to=3-3]
	\ar[from=3-3, to=4-2]
	\ar[from=4-1, to=4-2]
	\ar[from=3-1, to=4-1]
	\ar[from=4-1, to=5-1]
	\ar[from=3-4, to=4-4]
	\ar[from=4-4, to=5-4]
	\ar[from=4-4, to=4-5]
	\ar[from=3-6, to=4-5]
	\ar[from=3-7, to=3-8]
	\ar[from=3-7, to=4-7]
	\ar[from=4-7, to=4-8]
	\ar[from=4-7, to=3-8]
	\ar[from=3-9, to=3-8]
	\ar[from=3-9, to=4-8]
	\ar[from=4-7, to=5-7]
\end{tikzcd}\]

\subsection{Translation to evolution systems}

How does it apply to evolution systems? Let $\uv$ be the category of all simple directed graphs, transitions will be defined as in (\ref{e}) for one state of the graph at a time. 
\[\begin{tikzcd}
	\theta & {A_1} & {A_2} & \ldots
	\ar["{f_0^1}", from=1-1, to=1-2]
	\ar["{f_1^2}", from=1-2, to=1-3]
	\ar["{f_2^3}", from=1-3, to=1-4]
\end{tikzcd}\]
We shall fix the origin $\theta = A_0 = \big\{ (x_1, x_3), (x_1, x_2)\big\}$.
The first transition $f_0^1$ leads 
\[\begin{tikzcd}
	{\text{from} \; \theta } & {x_1} & {x_2} & {\text{to} \; A_1} & {x_1} & {x_4} & {x_2} \\
	& {x_3} &&& {x_3}
	\ar[from=1-2, to=1-3]
	\ar[from=1-2, to=2-2]
	\ar[from=1-5, to=2-5]
	\ar[from=1-5, to=1-6]
	\ar[from=1-7, to=1-6]
	\ar[from=2-5, to=1-6]
\end{tikzcd}\]
considering $x_1$, then the second transition $f_1^2$ leads from $A_1$ to $A_2$ again considering the same vertex 
\[\begin{tikzcd}
	{x_1} & {x_5} & {x_4} & {x_2} \\
	{x_3}
	\ar[from=1-1, to=1-2]
	\ar[from=1-1, to=2-1]
	\ar[from=1-4, to=1-3]
	\ar[from=1-3, to=1-2]
	\ar[from=2-1, to=1-2]
	\ar[from=2-1, to=1-3]
\end{tikzcd}\]
Each $A_n$ is an object of $\uv$.
Now we have two possible vertices to 'fix the transition' with: $x_1$ as well as a new possibility $x_3$. As mentioned above, we can proceed infinitely many times. 

This evolution system has indeed the  amalgamation property. If two transitions $f$ and $g$ are given, both of which choose one out of $n$ possible vertices $x_n$, say $f$ expands $x_k$ and $g$ expands $x_l$, then there exist further transitions $f'$ choosing $x_l$ and $g'$ choosing $x_k$ to expand. Both compositions of transitions in fact lead to the same graph (up to an isomorphism---relabelling the vertices).  

\pustka{Consider a new notation: let $f_{x_n}$ be unique transition, which extends a vertex $x_n$ of some fixed graph $A_k$. This way we have finitely many $f_{x_i}$, and transition $\map {f_k^{k+1}} {A_k} A_{k+1}$ would accumulate all the possible $f_{x_i}$ which can be 'done' simultaneously.  This way the system is determined as there is uniquely one transition from $A_k$ to $A_{k+1}$ for every $k<\omega$, so the transition amalgamation property is trivial. Therefore it is not as interesting approach as the original one. Moreover, our goal is to define transitions to be as simple as possible, so we would rather stick to the approach of expanding the graph one vertex at the time.}

\paragraph{Other rules.}
The rule mentioned above is just an example of how we can extend a given graph. We can define another transition rule: 
$$
\big\{ (x_1, x_2), (x_2, x_3) \big\} \xrightarrow[\text{}]{\text{r}} \big\{ (x_1, x_2), (x_2, x_3), (x_2, x_4), (x_3, x_4) \big\}
$$
\[\begin{tikzcd}
	{x_1} & {x_2} & {x_3} && {x_1} & {x_2} & {x_3} \\
	&&&&& {x_4}
	\ar[from=1-2, to=1-3]
	\ar[from=1-1, to=1-2]
	\ar[from=1-5, to=1-6]
	\ar[from=1-6, to=1-7]
	\ar[from=1-6, to=2-6]
	\ar[from=1-7, to=2-6]
	\ar["r", shorten <=19pt, shorten >=19pt, from=1-3, to=1-5]
\end{tikzcd}\]
In this case existing edges remain the same and two more are added. Therefore, at each step we have more states to consider, just as in our previous example.

In the next two examples, rewriting rules get a bit more complicated:	
$$
\big\{ (x_1, x_2), (x_1, x_3) \big\} \xrightarrow[\text{}]{\text{r}} \big\{ (x_4, x_1), (x_2, x_4), (x_2, x_3), (x_3, x_4) \big\}
$$
\[\begin{tikzcd}
	{x_1} & {x_2} && {x_1} & {x_4} & {x_2} \\
	{x_3} &&&& {x_3}
	\ar[from=1-1, to=1-2]
	\ar[from=1-6, to=1-5]
	\ar[from=2-5, to=1-5]
	\ar[from=1-6, to=2-5]
	\ar[from=1-1, to=2-1]
	\ar[from=1-5, to=1-4]
	\ar["r", shorten <=19pt, shorten >=19pt, from=1-2, to=1-4]
\end{tikzcd}\]

$$
\big\{ (x_1, x_2), (x_3, x_2) \big\} \xrightarrow[\text{}]{\text{r}} \big\{ (x_1, x_3), (x_4, x_1), (x_4, x_2), (x_4, x_3) \big\}
$$
\[\begin{tikzcd}
	{x_1} & {x_2} && {x_1} & {x_4} & {x_2} \\
	{x_3} &&& {x_3}
	\ar[from=1-1, to=1-2]
	\ar[from=1-5, to=1-6]
	\ar[from=1-5, to=2-4]
	\ar[from=1-5, to=1-4]
	\ar["r", shorten <=19pt, shorten >=19pt, from=1-2, to=1-4]
	\ar[from=2-1, to=1-2]
	\ar[from=1-4, to=2-4]
\end{tikzcd}\]
Nevertheless at each step there exists a pattern graph which can be extended.

It is possible to define an evolution system where transitions are based on any of those extension rules. The only condition is that the origin $\theta$ has at least one suitable subset of edges. As long as our category consists of simple directed graphs, every evolution system has the amalgamation property. All of the rules above can be written using DPO as well.

\subsection{Multirule approach}

It is possible to define an evolution system with amalgamation in which the set of transitions is defined using finitely many rules. The amalgamation property is preserved up to relabelling the vertices as long as the rules are not contradictory. Here is an example; let $A = \big\{ (x_1, x_2), (x_2, x_3), (x_4, x_2)\big\}$ and define two distinct subsets of transitions $T_1, T_2$  
$$
f\in T_1 \iff \big\{(x_1, x_2), (x_4, x_2) \big\} \xmapsto{f} \big\{(x_1, x_4), (x_5, x_1), (x_5, x_2), (x_5, x_4) \big\}
$$
$$
g\in T_2 \iff \big\{(x_1, x_2), (x_2, x_3) \big\} \xmapsto{g} \big\{(x_1, x_2), (x_2, x_3), (x_2, x_5), (x_3, x_5) \big\}
$$
The diagram below represents a graph $A$ with transitions $f$ and $g$
\[\begin{tikzcd}
	{x_1} & {x_2} & {x_3} && {x_1} & {x_5} & {x_2} & {x_3} \\
	{x_4} &&&& {x_4} \\
	{x_1} & {x_2} & {x_3} && {x_1} & {x_5} & {x_2} & {x_3} \\
	{x_4} & {x_5} &&& {x_4} && {x_6}
	\ar[from=2-1, to=1-2]
	\ar[from=1-1, to=1-2]
	\ar[from=1-2, to=1-3]
	\ar[from=1-5, to=2-5]
	\ar[from=1-6, to=2-5]
	\ar[from=1-6, to=1-5]
	\ar[from=1-6, to=1-7]
	\ar[from=2-5, to=1-7]
	\ar[from=3-1, to=3-2]
	\ar[from=4-1, to=3-2]
	\ar[from=3-2, to=3-3]
	\ar[from=3-2, to=4-2]
	\ar[from=3-3, to=4-2]
	\ar[from=3-6, to=4-5]
	\ar[from=3-5, to=4-5]
	\ar[from=3-6, to=3-5]
	\ar[from=3-6, to=3-7]
	\ar[from=4-5, to=3-7]
	\ar[from=1-7, to=1-8]
	\ar[from=3-7, to=4-7]
	\ar[from=3-7, to=3-8]
	\ar[from=3-8, to=4-7]
	\ar["{f'}", shorten <=10pt, shorten >=6pt, dashed, from=1-7, to=3-7]
	\ar["g", shorten <=10pt, shorten >=6pt, dashed, from=1-2, to=3-2]
	\ar["f", shorten <=14pt, shorten >=14pt, dashed, from=1-3, to=1-5]
	\ar["{g'}", shorten <=14pt, shorten >=14pt, dashed, from=3-3, to=3-5]
\end{tikzcd}\]
Note that for every $f\in T_1, g\in T_2$ it is possible to find $f' \in T_2, g'\in T_1$ such that  $f' \circ f = g' \circ g$, meaning that the composition leads to the same result.

\subsection{Evolution systems vs. graph rewriting}

Graph rewriting is all about simplifying graphs by reducing the number of vertices and edges. The main goal is termination, and what really matters is the final result, not necessarily the way of how we can obtain it. On the other hand, we have the notion of evolution systems, that is about going towards infinity by expanding structures, preferably one step at a time. In this approach, paths are far more relevant than the outcome. 

Directed terminating evolution systems may be used to model the complexity
of rewriting, adding “costs” to the transitions, assuming that isomorphisms have
zero cost. A concrete example is the classical problem of minimizing the cost of
computing a given product of rectangular matrices, where a transition is multiplying
two adjacent matrices. 

Let $A_1\cdot A_2 \cdot \ldots \cdot A_n$ be a multiplication of $n$ rectangular matrices. It creates a terminating, confluent evolution system in which the transition amalgamation property fails. Specifically, the origin is $A_1\cdot A_2 \cdot \ldots \cdot A_n$ and the other objects are results of partial multiplications of this expression. The unique normalized object is a single matrix $M = A_1\cdot A_2 \cdot \ldots \cdot A_n$.

Paths from the origin to $M$ could have different costs, For instance, one possibility is to multiply fist $A_1\cdot A_2=B$ then $B\cdot A_3$ and so on. On the other hand we can first choose two adjacent matrices of the sizes $1\times n$ and $n\times 1$, which after multiplication give us a constant. We can make those transitions first. Such a way of multiplication will typically cost less than the previous one. Or we can use a divide and conquer algorithm to obtain the fastest process. There are many possible approaches to solve the problem, while the outcome always remains the same.

}

\section{Conclusions and future research}\label{SecFajw}

We believe that the concept of evolution systems will open the gate for investigating new aspects of the theory of generic objects. 
In particular, one can study the complexity of the path leading to a generic object, possibly adding some weights to the transitions.
In fact, evolution systems without any confluence properties (e.g. trees) can be of interest, too.

It should now be clear that all the examples described in Section~\ref{SecTwoo} can be formally phrased in the language of evolution systems. One of them (ribbons) actually has only the \emph{approximate} amalgamation property, namely, two transitions $f,g$ (continuous surjections of the unit interval) can be completed by transitions $f',g'$ to a square
$$\begin{tikzcd}
	\unii & \unii \ar[l, "f"'] \\
	\unii \ar[u, "g"] & \unii \ar[u, "f'"'] \ar[l, "g'"]
\end{tikzcd}$$
that commutes with a prescribed small error $\eps>0$. That is, $|f(f'(t)) - g(g'(t))| < \eps$ for every $t \in \unii$.
Formal treatment of such situations needs a category enriched over complete metric spaces (see~\cite{Kub41} for details). On the other hand, in this particular case, one can restrict to piecewise linear surjections, where the amalgamation property (known as the Mountain Climbing Theorem) holds with no errors.
Another example from Section~\ref{SecTwoo}, dealing with simplices, does not have this problem (amalgamation property just holds true), however, there are too many transitions and the absorption property holds approximately (with arbitrarily small errors). Again, in order to treat it formally, one needs to use metric-enriched categories. On the other hand, in this particular case it is possible to restrict the class of transitions so that the problem disappears. Specifically, given a simplex $\Delta_m$, a transition could be an affine surjection $\map{f}{\Delta_{m+1}}{\Delta_m}$ that is identity on $\Delta_m$ and the ``new'' vertex is mapped to a point of $\Delta_m$ that has rational barycentric coordinates.
By this way our evolution system is locally countable and the theory of Section~\ref{SecFourr} applies.

\separator

Below we indicate possible lines of further investigation, some of them inspired by recent research in the theory of generic structures.

\paragraph{Comparing evolution systems}

One of the natural lines of research is searching for tools that would allow comparing and classifying evolution systems, perhaps emphasizing on those admitting generic evolutions. For instance, given two different evolution systems, if the colimits of the respective evolutions with the absorption property are isomorphic, then perhaps being able to decide which of those systems is less/more complicated would be valuable. 

\begin{problem}
    Define a suitable notion of a rank on evolution systems that would allow to  compare the complexity of evolution systems with TAP, in particular realizing the same generic object.
\end{problem}

\paragraph{Adding probabilities} 

Another line of research is studying probabilistic approach, especially when the system is locally finite, where it is natural to impose the uniform probability. In this setting, evolution systems can model abstract stochastic processes. One can enrich an evolution system $\Ee =\triple \uv \Tau \iza$, by adding a probability measure on $\Tau(X)$ for each $X \in \ob \uv$.

\begin{problem}
    Investigate probability-enriched evolution systems. In particular, describe when the colimit of a generic evolution in this enriched setting is isomorphic to a fixed in advance object with probability one.
\end{problem}

\paragraph{Automata}

Evolution systems could possibly play some role in automata theory, where an automaton is viewed as a category (see e.g.~\cite{CoPeSt}) and transitions are the only way of moving from one state to another. One can then specify a family of objects that would be ``accepting'' the input, that is, the origin of the system, in our terminology.

\paragraph{Weak amalgamation}

Yet another possible line of research is studying the (possibly quantitative) variant of the weak amalgamation property~\cite{Kub61}, where the two transitions do not necessarily amalgamate, however, the amalgamation is possible by allowing a fixed path to take part in the amalgamation. Specifically, given a finite object $A$, there should exist a path $\map{p}{A}{A'}$ such that for every transitions $\map{f}{A'}{X}$, $\map{g}{A'}{Y}$ there are transitions $\map{f'}{X}{W}$, $\map{g'}{Y}{W}$ satisfying $f' \cmp f \cmp p = g' \cmp g \cmp p$. The length of $p$ could serve as the ``measure'' of the quality of this property.

\pustka{$$\begin{tikzpicture}[>=stealth]
        \node (F) at (-2, 2) {$A$};
        \node (F') at (0,0) {$A'$};
        \node (X) at (2,0) {$X$};
        \node (Y) at (0,-2) {$Y$};
        \node (D) at (2,-2) {$W$};

        \draw[->, ppath=2.5] (F) -- node[above]{$p$} (F');
        \draw[->] (F) edge[bend left=20] node[above]{$f \circ p$} (X);
        \draw[->] (F) edge[bend right=20] node[left]{$g \circ p$} (Y);
        \draw[->] (F') -- node[above]{$f$} (X);
        \draw[->] (F') -- node[left]{$g$} (Y);
        \draw[->] (X) -- node[right]{$f'$} (D);
        \draw[->] (Y) -- node[below]{$g'$} (D);        
    \end{tikzpicture}$$}

\begin{problem}
    Explore evolution systems, where instead of transition amaglamation property one assumes its weaker variant.
\end{problem}

\paragraph{Uncountable evolutions}

Finally, when it comes to set theory, it is natural to consider uncountable evolutions, just assuming that the universe is sequentially co-complete. In order to get generic evolutions of a given length, one needs to add an extra structure to the system, namely, a class of commutative squares of transitions, called \emph{tiles}. By this way, evolution systems generalize categories of cell complexes (see e.g.~\cite{Hovey} for details). This line of research is actually a work in progress (joint with A. Avil\'es and I. Di Liberti).
It also touches the concept of accessible categories~\cite{GabUlm} developed by Ad\'amek and Rosick\'y \cite{Ros02, Ros97, AR94} and earlier by Makkai and Par\'e~\cite{MakPar}.


\paragraph{\Kat\ functors}

The work~\cite{KubMas} of Ma\v{s}ulovi\'c and the first author explored the concept of so-called \emph{\Kat\ functors} which are actually pairs consisting of a functor $K$ and a natural transformation $\eta$ from the identity to $K$, such that additionally $K(X)$ realizes all one-point extensions of $X$. Here $X$ is a (finite or infinite) first-order structure. A \Kat\ functor can be viewed as a uniform way of constructing \fra\ limits, and one of the main applications is universality of its automorphism group.
The paper~\cite{KubMas} was written in the language of model theory, simply because pure category theory lacks the notion of one-point extension.

\begin{problem}
	Develop \Kat\ functors in the framework of evolution systems, replacing one-point extensions by transitions.
\end{problem}

Let us note that \Kat\ functors can also lead, through transfinite iterations, to uncountable structures that are homogeneous with respect to their finite substructures. We believe that in the abstract setting of evolution systems one can obtain new examples and applications, way beyond model theory.


\paragraph{Acknowledgments.}
The authors would like to thank Adam Barto\v{s}, Tristian Bice, Ivan Di Liberti, Mirna D\v{z}amonja, Paul-Andr\'e Melli\`es, Christian and Maja Pech, and Daniela Petrisan, for several valuable discussions on the topic. Special thanks are due to Christian Pech for pointing out the connections with abstract rewriting systems.


\end{document}